\documentclass[11pt]{article}
\usepackage[margin=2.9cm]{geometry}
\RequirePackage[colorlinks,citecolor=blue,urlcolor=blue]{hyperref}

\usepackage{amssymb}
\usepackage{amsmath}
\usepackage{amsthm}
\usepackage{verbatim}
\usepackage{graphicx}
\usepackage{epstopdf}
\usepackage{amsmath}
\usepackage{bm}
\usepackage{setspace}
\usepackage[inline, shortlabels]{enumitem}
\usepackage{microtype}
\usepackage{enumitem}
\usepackage{subcaption}
\usepackage{booktabs}
\usepackage{float}
\usepackage{authblk}

\usepackage{multirow}

\usepackage{mathtools}
\usepackage{tikz}
\usetikzlibrary{decorations.pathmorphing}

\newcommand{\xrsquigarrow}[1]{\mathrel{\begin{tikzpicture}[baseline={(current bounding box.south)}]
        \node[inner sep=.5ex] (a) {$\scriptstyle #1$};
        \path[draw,<-,decorate,
              decoration={zigzag,amplitude=.7pt,segment length=1.2mm,pre=lineto,pre length=4pt}]
            (a.south east) -- (a.south west);
    \end{tikzpicture}}}

\usepackage{xparse}
\NewDocumentCommand{\permto}{ >{\SplitArgument{2}{,}}m }{\permtoaux#1}
\NewDocumentCommand{\permtoaux}{mmm}{#1 \xrightarrow{#2} #3}

\NewDocumentCommand{\apermto}{ >{\SplitArgument{2}{,}}m }{\apermtoaux#1}
\NewDocumentCommand{\apermtoaux}{mmm}{#1 \xrsquigarrow{#2} #3}

\usepackage{tikz-cd}
\usepackage[all,cmtip]{xy}

\usepackage[font=small]{caption}

\RequirePackage[colorlinks,citecolor=blue,urlcolor=blue]{hyperref}
\usepackage[citestyle=alphabetic,
			bibstyle=alphabetic,
			backend=bibtex,
			doi=false,
			isbn=false,
			url=false,
                maxnames=10,
			]{biblatex}

\bibliography{refs}
\renewbibmacro{in:}{}			
\DeclareFieldFormat[article,inbook,incollection,inproceedings,patent,thesis,unpublished]{citetitle}{#1}
\DeclareFieldFormat[article,inbook,incollection,inproceedings,patent,thesis,unpublished]{title}{#1}

\usepackage{multicol}
\usepackage{array, makecell}
\usepackage{algorithm}
\usepackage{algpseudocode}
\algrenewcommand\algorithmicrequire{\textbf{Input:}}
\algrenewcommand\algorithmicensure{\textbf{Output:}}

\makeatletter
\newcommand*\bigdot{\mathpalette\bigdot@{.5}}
\newcommand*\bigdot@[2]{\mathbin{\vcenter{\hbox{\scalebox{#2}{$\m@th#1\bullet$}}}}}
\makeatother

\newcommand\alli{*}
\usepackage{caption}
\usepackage{subcaption}
\usepackage[matrixnorms,opnorm=op]{arash_macros}
\newcommand{\nhamming}{\ensuremath{d_{\text{NH}}}}
\newcommand{\hamming}{\ensuremath{d_\text{H}}}
\newcommand\mis{\operatorname{Mis}}
\newcommand\epst{\widetilde \eps}
\newcommand\Dc{\mathcal D}
\newcommand\Gc{\mathcal G}

\newcommand\df{d}

\newcommand\convd{\Rightarrow}

\newcommand\mh{\widehat m}
\newcommand\hh{\widehat h}
\newcommand{\Dh}{\widehat D}
\newcommand{\Eh}{\widehat E}
\newcommand\Ac{\mathcal A}
\newcommand\Bc{\mathcal B}

\newcommand\pmin{\pi_{\min}}
\newcommand\sums{\mathcal S}

\newcommand{\ph}{\widehat p}
\newcommand\Ec{\mathcal E}
\newcommand\kb{\bar \kappa}
\newcommand\maxnorm[2][]{\norm[#1]{#2}_{\max}}

\newcommand\class{\mathcal C}

\newcommand\lipnorm[1]{\norm{#1}_{\operatorname{Lip}}}

\newtheorem{theorem}{Theorem}[section]
\newtheorem{lemma}{Lemma}[section]
\newtheorem{proposition}[theorem]{Proposition}
\newtheorem{remark}[theorem]{Remark}

\DeclareMathOperator\sbm{\mathsf{SBM}}
\DeclareMathOperator\rdpg{\mathsf{RDPG}}
\DeclareMathOperator\graphon{\mathsf{Graphon}}
\DeclareMathOperator\unif{\mathsf{U}}
\DeclareMathOperator{\bin}{\mathsf{Bin}}
\DeclareMathOperator\bern{\mathsf{Ber}}
\DeclareMathOperator\bernb{\overline{\mathsf{Ber}}}
\newcommand\given{\,|\,}

\newcommand\Bh{\widehat B}
\newcommand\perm{\Pi}
\newcommand\match{\mathcal M}
\newcommand\conn{\mathcal B}
\newcommand\counts{\mathcal C}
\newcommand\Ph{\widehat P}
\newcommand\Th{\widehat T}
\newcommand\Bt{\widetilde B}

\newcommand\zh{\widehat z}
\newcommand\nh{\widehat n}
\newcommand\Sh{\widehat S}
\newcommand\ind[1]{1\{#1\}}

\newcommand\mb{\bar m}
\newcommand\sigh{\widehat \sigma}

\DeclareMathOperator\lap{LAP}
\newcommand\Qh{\widehat Q}
\newcommand\Delh{\widehat \Delta}
\newcommand\St{\widetilde S}
\newcommand\Pt{\widetilde P}
\usepackage{tikz-cd}

\newcommand\allone{\bm 1}
 \newcommand\dmatch{d_F^{\operatorname{ma}}}

\theoremstyle{definition}
\newtheorem{prob}{Problem}

\title{Network two-sample test for block models} 
\author{Chung Kyong Nguen, Oscar Hernan Madrid Padilla and Arash A. Amini}
\affil{Department of Statistics, UCLA}

\begin{document}

\maketitle

\begin{abstract}
    We consider the two-sample testing problem for networks, where the goal is to determine whether two sets of networks originated from the same stochastic model. Assuming no vertex correspondence and allowing for different numbers of nodes, we address a fundamental network testing problem that goes beyond simple adjacency matrix comparisons. We adopt the stochastic block model (SBM) for network distributions, due to their interpretability and the potential to approximate more general models. The lack of meaningful node labels and vertex correspondence translate to a graph matching challenge when developing a test for SBMs.
    We introduce an efficient algorithm to match estimated network parameters,
    allowing us to properly combine and contrast information within and across samples, leading to a powerful test. We show that the matching algorithm, and the overall test are consistent, under mild conditions on the sparsity of the networks and the sample sizes, and derive a chi-squared asymptotic null distribution for the test.
     Through a mixture of theoretical insights and empirical validations, including experiments with both synthetic and real-world data, this study advances robust statistical inference for complex network data.
\end{abstract}

\section{Introduction}

Network data is pervasive across various fields, including transportation \cite{cardillo2013emergence}, trading \cite{barigozzi2010multinetwork}, social networks \cite{eagle2006reality,yu2021optimal}, neuroscience \cite{prevedel2014simultaneous,park2015anomaly,braun2021brain,padilla2022change}, ecology \cite{shen2022bayesian}, and politics \cite{amini2013pseudo}, among others. To address the diverse needs of these applications, recent statistical methods have emerged to handle scenarios where multiple networks are observed~ \cite{kolaczyk2019averages, mukherjee2017clustering,  josephs2023nested, amini2024hierarchical}. In this paper, we  consider the problem of testing whether two sets of networks have been generated from the same probability model. Specifically, in 
 network
 two-sample testing,  we are given two collections of graphs  $\{G_{1 1},\ldots, G_{1 N_1 }\}$ and  $\{G_{2 1 },\ldots, G_{2 N_2} \}$ generated as $G_{rt} \sim F_r$ for $r \in \{1,2\}$, $t \in [N_r] := \{1,\ldots,N_r\}$, and would like to test
\begin{equation}
\label{eqn:test}
H_0: F_1 = F_2 \quad{} \text{against} \quad H_1: F_1\neq F_2.
\end{equation} 
We approach the two-sample testing problem in (\ref{eqn:test}) without requiring the existence of vertex correspondence. This means that the nodes in different networks  are unlabeled, and as a consequence, the $i$th node in the $t$th network of the $r$th sample has no direct correspondence or meaning in any of the other networks. Additionally, it is possible for the number of nodes, denoted as $n_{rt}$, to vary across different networks. As such, the problem we are studying is truly a network testing problem, and not immediately reducible to testing the distributions of a collection of  adjacency matrices.

To further elaborate on the subtlety of graph versus matrix testing, 
consider the adjacency matrices $A_{rt} \in \{0,1\}^{n_{rt} \times n_{rt}}$ associated with each graph $G_{rt}$ for $r\in \{1,2\}$ and $t \in [N_r]$, by fixing a particular node labeling for each graph. Then, graph-level distribution $F_1$ induces distributions on the adjacency matrices $A_{1t}, t \in [N_1]$. However, these distributions are not directly comparable since the dimensions of $\{A_{1t}\}$ could be different. Even if the dimensions are the same (i.e., graphs $\{G_{1t}\}$ all have the same number of nodes), the distributions of $\{A_{1t}\}$ depend on the particular labelings chosen for each graph. Changing the labelings, will change the distributions of $\{A_{1t}\}$, although we want to treat all such distributions as the same. In other words, 
when testing the equality of distributions of the adjacency matrices, the formulation must account for equality up to relabeling of the nodes, and disregard the potential size mismatches, to truly test at the level of graph distributions.

To model the distribution of the adjacency matrices, we adopt the stochastic block model (SBM) framework introduced in \cite{holland1983stochastic} as a foundational structure. The SBM is one of the most commonly used models in the literature due to its simplicity and interpretability. It is effectively the equivalent of the ``histogram'' for network distributions~\cite{olhede2014network}, capable of approximating (piecewise) smooth graphons~\cite{airoldi2013stochastic, Gao2015RateOptimal}.
Given the prevalence of histograms, or binning, in traditional testing problems, SBMs serve as the natural starting point for studying two-sample testing for networks.

 In this work, we assume that the adjacency matrices are generated from SBM distributions. 
In particular, an adjacency matrix $A_{rt}\in \{0,1\}^{n_{rt} \times  n_{rt} } $ is generated as follows. First, we draw a vector of random \emph{community} labels $z_{rt} =  (z_{rt1 }, \ldots,z_{rtn } )^{\top} \in [K]^n$ with entries that are independent draws from a categorical distribution with parameter $\pi_r  =  (\pi_{r1},\ldots, \pi_{rK})$,  where $\pi_{rl}>0$ for all $l$ and  $\sum_{l=1}^K \pi_{rl} =1$. Here, $K$ is the number of communities. Then, given the labels $z_{rt}$, the entries of $A_{rt}$, below the diagonal, are independently drawn as
\[
   (A_{rt})_{ij} \given z_{rt} \,\sim \,\bern( (B_{r})_{z_{rti}\,z_{rtj} }  ),  \quad i < j,
\]where $B_{r}  \in [0,1]^{K \times K }$ is a symmetric matrix of probabilities. We assume the networks to be undirected, with no self-loops, hence $A_{rt}$ is extended to a symmetric matrix with zero diagonal entries. Throughout, we write $A_{rt} \sim \sbm_{n_{rt}}(B_r,\pi_r)$ for $A_{rt}$ generated as described above. We often suppress the dependence on $n_{rt}$ and simply write $A_{rt} \sim \sbm(B_r,\pi_r)$.

Given  $A_{rt} \sim \sbm(B_r,\pi_r)$, the network testing problem~\eqref{eqn:test} is equivalent to the following
\begin{equation}
\label{eqn:test2}
H_0: B_1 = P B_2 P^{\top} \;\text{for some }\, P \in \Pi_K     \quad{} \text{against} \quad H_1: B_1 \neq P B_2 P^{\top} \;\text{for all }\, P \in \Pi_K,
\end{equation} 
where $\Pi_K$ is the set of $K \times K$ permutation matrices. The inclusion of permutation matrices $P$ in~\eqref{eqn:test2} is related to the subtlety of testing unlabeled network models. The fact that there is no pre-defined node label or vertex correspondence across networks leads to the meaninglessness of community labels in SBM.
Consequently, demanding that $B_1 = B_2$ under the null hypothesis is not meaningful. Put another way, SBM parameters $(B_r,\pi_r)$ are only identifiable up a permutation; see~\eqref{eq:perm:equiv}.

To develop a test for~\eqref{eqn:test2}, it is essential to address 
the significant challenge posed by
the fact that matrices $B_r$ are only identifiable up to permutations.
A potential approach to constructing a test statistics is to estimate the community labels for each network, using any of the many existing approaches (for instance \cite{amini2013pseudo}). Subsequently, one can proceed to estimate $B_r$ by computing block averages of the adjacency matrices, based on these labels. However, the challenge lies in how to effectively combine, and compare, the estimates from different networks when vertex correspondence is not assumed. This becomes particularly daunting, as matching estimated labels across different networks 
boils down to searching over the space of  permutations $\Pi_K$, which has  $K!$ elements, a non-trivial task even for moderate values of $K$. It is worth noting that this matching challenge exists even if we observe a single network for each of the two samples, since one has to properly match the communities of the two networks to be able to compare the connectivity matrices $B_1$ and $B_2$.

\subsection{Summary of results}

In this paper, we tackle the aforementioned matching and computational challenges and make the following contributions:

\begin{enumerate}
    \item We study a computationally efficient algorithm that produces a permutation matrix $\Pt \in \Pi_K$  to match estimates $\Bh_r$ of $B_r$ for $r=1,2$. The algorithm has the property that, under some regularity conditions, if $\fnorm{ B_r - P_r \Bh_r P_r^{\top}}$ is small enough for some permutation matrix $P_r$, for $r=1,2$, and with $\fnorm{\cdot}$ the usual Frobenius norm, then it holds that
   \begin{equation}
       \label{eqn:matchin_upper}
       \fnorm{\Bh_2 - \Pt \Bh_1 \Pt^T} \leq 
           \sum_{r=1}^2 \fnorm{B_r - P_r\Bh_r  P_r^T}.
   \end{equation}
    Thus, we can match $\Bh_1$ with $\Bh_2$ using the permutation matrix $\Pt$ and the error is not worse than the sum of the estimation errors associated with $\Bh_1$ and $\Bh_2$.  Hence, when $\{\Bh_r\}$ are consistent estimators of $\{B_r\}$ (up to permutation), then  the matching $\Pt$ is consistent.    The main condition for the result in (\ref{eqn:matchin_upper}) is an $(\eps, \delta)$-friendly assumption, on $B_r$ matrices, as detailed in Definition~\ref{def:eigen}. In this condition, $\delta>0$ is the minimum gap between the eigenvalues of $B_r$ and $\eps>0$ is related to the corresponding eigenvectors.

    \item We show that when our proposed test statistic $\Th_n$ (Algorithm~\ref{alg:sbm-ts}) is applied  to two SBM samples, with sizes satisfying $n \,\le\, n_{rt} \,\lesssim \,  n $, and the same connectivity matrix (up to permutation) $B \in [0,1]^{K \times K}$ satisfying the sparsity condition $\frac{\nu_n}{n} \,\le\, B_{k\ell} 
 \,\lesssim\, \frac{ \nu_n}{n}$ for all  $k, \ell \in [K]$, then $ \Th_n$ converges in distribution to $\chi^2_{K(K+1)/2}$,  a chi-squared distribution  with $K(K+1)/2$ degrees of freedom. Thus, under the null distribution, we characterize the asymptotic distribution of our test statistic.  This holds under very general conditions. Specifically, letting  $\epst \in [0,1] $ be the largest missclassification error for estimating the labels $z_{rt}$ for  $t \in [N_r]$, $r \in \{1,2\}$, we require that $\sqrt{ N \nu_n n} \,\epst   \,=\, o_p(1)$ and $ N \nu_n n \to \infty$,  where $N =\min\{N_1,N_2\}$.

 \item Our theory also shows that  if the largest misclassification error  $\epst $ satisfies 
 \begin{equation}
     \label{eqn:signal_condition}
     \epst   +  \sqrt{\frac{ \log n}{ N \nu_n} } \,\lesssim \,  (\nu_n/n)^{-1}\min_{P \in \Pi_K}    \fnorm{B_2 - PB_1P^T}
 \end{equation}
 then, with probability approaching one, it holds that 
 \[
 \Th_n \gtrsim N n^2\cdot \min_{P \in \Pi_K}    \fnorm{B_2 - PB_1P^T}.
 \]
 Thus, provided that $B_1$ is different from $B_2$ even after potentially matching, then our test statistic will grow  to infinity, showing the consistency of the test,  as long as there is enough signal as stated in \eqref{eqn:signal_condition}.

\item We provide a simple, computationally efficient algorithm for the two-sample testing of SBMs. We demonstrate the performance of the test for both synthetic and real-world datasets, compared to the existing methods. The synthetic data include  SBMs, but also generative models beyond SBMs, including random dot product graphs and graphons, demonstrating the wide range of applicability of our test.

\end{enumerate}
\subsection{Related work}
\label{sec:related_work}

Two-sample testing for labeled
networks with known vertex correspondence has been well studied in the literature. 
In this setting, the vertex correspondence refers to the graphs being built on the same vertex set and 
the 1-1 mapping of vertices from one graph to another is given. Notable efforts in the labeled case include the following: 
\cite{Gin17}, where the authors develop a test based
on the geometric characterization of the space of the graph Laplacians. \cite{Ghoshdastidar2020} consider the problem from a minimax testing perspective, focusing on the theoretical characterization of the minimax separation with respect to the number of networks and the number of nodes. \cite{Levin} develop a test based on an omnibus embedding of multiple networks into a single space. 
\cite{chen20}  develop a spectral-based test statistic that has an asymptotic Tracy--Widom law under null. The results from  \cite{chen20} were improved by \cite{chen21} who constructed a test with milder technical conditions for the theoretical performance.

Existing literature on testing (\ref{eqn:test}) with unlabeled networks with no vertex correspondence is limited, but there have been notable efforts. For instance, \cite{kolaczyk2019averages} develop a geometric and statistical framework for making inferences about populations of unlabeled networks. They accomplish this by providing a geometric characterization of the space of unlabeled networks and introducing a Procrustean distance that explores the space of permutations $\Pi_n$  which has  $n!$ elements. It is worth noting that one major limitation of such approach is that optimizing the Procrustean distance over the space of $\Pi_n$ is a NP-hard problem. \cite{tang2017} introduce a two-sample test for random dot product graphs. This test statistic is based on estimating the maximum mean discrepancy between the latent positions, and it is proven to converge to the maximum mean discrepancy between the true latent positions. However, it is important to note that the statistics proposed in \cite{tang2017} do not involve any matching between latent positions, which can potentially impact the test's power, as we demonstrate in Section~\ref{sec:rdpg}. Additionally, the lack of matching mechanism does not allow for testing with samples including more than one network. \cite{Maugis_2020} develop a two-sample test, based on subgraph counts, and characterize the null distribution under a graphon model.
As they also point out, in general, an infinite number of subgraph counts, or more precisely homomorphism densities, is needed to distinguish two graphons. For the so-called \emph{finitely forcible} subclass~\cite[Section 16.7.1]{lovasz2012large}, which includes SBMs, a finite number is enough, but this number could still be large; see~\cite{gunderson2023graph} for recent developments in consistent estimation of SBMs from homomorphism densities. Given the computational complexity of estimating subgraph counts for large subgraphs, the approach becomes prohibitive for large networks.

A relevant line of research is clustering network-valued data.  Although algorithms for clustering are not primarily designed for testing, they can be adapted to our setting.
\cite{mukherjee2017clustering} describe
two algorithms for clustering.
Of interest to us is their NCLM algorithm for unlabeled networks, which is based on comparing network log moments. We describe the algorithm in detail in Section~\ref{sec:experiments} and compare to it.

As discussed earlier, the construction of our test relies on the subroutine that matches the estimated connectivity matrices of two networks. This type of problem has been studied extensively in the theoretical computer science literature and is known as weighted graph matching. It is formulated as an optimization problem:
\begin{equation}\label{eq:optimal:matching}
 \match^{\operatorname{opt}}(B_1 \to B_2): \quad P^* \in \argmin_{P \in \Pi_K} \fnorm{B_2 - PB_1 P^{\top}}
\end{equation}
where $B_1,B_2 \in \mathbb{R}^{K \times K}$ are input matrices.   The problem  can be rewritten as 
\begin{equation}
    P^* = 
    \argmax_{P \in \Pi_K}  \; \tr(P^{\top} B_2 P B_1),
\end{equation}
known as quadratic assignment problem (QAP) which is NP-hard. One of the standard techniques is to relax the constraint set of permutation matrices to a more tractable set, e.g. its convex hull \cite{lyz2016, vog2015, Aflalo2015} or the set of orthogonal matrices \cite{feizi}, and then ``round" the solution to the set of permutation matrices. 
\cite{ume} proposes to solve the weighted matching by solving a linear assignment problem on a certain matrix derived from 
eigenvectors of the adjacency matrices of the two graphs. \cite{fan20a} show exact recovery of the permutation
with high probability for the Gaussian Wigner model by constructing a weighted similarity matrix between all pairs of outer products of the eigenvectors of the adjacency matrices and then projecting it onto $\Pi_K$.  
\cite{Aflalo2015} solve the graph matching problem for the class of $(\eps, \delta)$-friendly graphs by relaxing the quadratic assignment problem to the non-negative simplex and then subsequently projecting onto $\Pi_K$ by solving a linear assignment problem (LAP).
\cite{feizi} study the unweighted graph matching problem and show mean-field optimality methods over the Erd\H{o}s--R\'{e}nyi
 graphs using transformations of the leading eigenvectors to align the structures of the adjacency matrices. The idea of our main matching algorithm is mentioned in the passing in \cite[Section 39.4]{spielman2019spectral} in the context of testing isomorphism of two graphs, and is attributed to~\cite{LeightonMiller79}. We present  other ideas for matching with the potential for further development in Section~\ref{sec:discuss}.

\subsection{Notation}

We often consider a one-to-one correspondence between permutations matrices $P \in \mathbb{R}^{K \times K}$ and permutations $\sigma$ on the set $[K] := \{1,\ldots,K\}$. The correspondence $\sigma \mapsto P_\sigma$ is defined by the following identity:
$(P_\sigma v)_i = v_{\sigma(i)}$
for all $v \in \reals^K$. It follows that $(P_\sigma^{T} v)_i = v_{\sigma^{-1}(i)}$. It is also helpful to note that $[P_{\sigma}]_{i \alli} = e_{\sigma(i)}^{\top}$ and $[P_{\sigma}^{\top}]_{\alli j} = [P_{\sigma}]_{j \alli}^{\top} = e_{\sigma(j)}$. This implies that if $B$ is a $K\times K$ matrix, $[P_\sigma B P_\sigma^{\top}]_{ij} = B_{\sigma(i),\sigma(j)}$. The linear assignment problem is often written as $\max_\sigma \sum_{i=1}^K B_{i, \sigma(i)}$. The cost function in this case is equivalent to $\tr(B P_\sigma^{\top})$. This follows by noting $[B P_\sigma^{\top}]_{ij} = B_{i \alli}\, e_{\sigma(j)} = B_{i, \sigma(j)}$.

Let  $P$ and $Q$   be matrices with associated permutations $\sigma$ and $\tau$ respectively. Notice that $\sigma \circ \tau$ is the permutation associated with $QP$, since $(QPv)_i   =  (Pv)_{\tau(i)} = v_{  \sigma(\tau(i)) )}$.

For $p \in [1, \infty)$ and a $n \times m$ matrix $A$, the $\ell_p \to \ell_p$ operator norm of a matrix $A = (a_{ij})$ is given by $\mnorm{A}_{p} := \sup_{x \neq 0} \norm{Ax}_p/\norm{x}_p$. In the special cases $p = 1, \infty$, we have
		$\mnorm{A}_{1}  = \max_j \sum_i |a_{ij}|$ and 
		$\mnorm{A}_{\infty} = \max_i \sum_j |a_{ij}|$.
We also write $\maxnorm{A} = \max_{i,j} |A_{ij}|$.

For label vectors $z, \zh \in [K]^n$, we
write $\hamming(z,\zh) = \sum_{i=1}^n 1\{z_i \neq \zh_i\}$ for their Hamming distance, $\nhamming(z,\zh) = \hamming(z,\zh)/n$ for their the normalized Hamming distance, and $\mis(z, \zh) = \min_{\sigma \in \Pi_K} \nhamming(z, \zh \circ \sigma)$ for the corresponding misclassification rate.

\section{Matching Methodology}

In this section, we begin by describing the challenges associated with the matching problem, specifically the task of aligning two matrices, $B_1$ and $B_2$, by relabeling the communities. Subsequently, we introduce our proposed matching algorithm, which is at the heart of our test construction.

\subsection{Matching challenge}\label{sec:matching:challange}

From a statistical perspective, the challenge in testing samples from $\sbm(B, \pi)$ is that the parameters $(B, \pi)$ are identifiable only up to permutations. That is, for any permutation matrix 
\begin{align}\label{eq:perm:equiv}
\sbm(B, \pi) = \sbm(P BP^{\top}, P \pi)
\end{align}
as distributions. To illustrate the challenge more clearly, consider observing the two-sample problem with only two adjacency matrices
\[
 A_1 \sim \sbm(B_1, \pi_1), \quad A_2 \sim \sbm(B_2, \pi_2).
\]
Assume that the null hypothesis holds, where  we can assume $B_2 = B_1$. To devise a test, a natural first step is to fit an $\sbm$ to $A_r$, using any consistent community detection algorithm, to obtain the labels, from which we can construct an estimate $\Bh_r = \conn(A_r)$ of $B_r$, for $r=1,2$. The operator $\conn$ is a shorthand for the procedure that produces such estimate, the details of which are discussed in Section~\ref{sec:test_construction}.
Even though $B_1 = B_2$, in general, we will have 
\[
\Bh_2 \approx P^* \Bh_1 P^{*\top},
\]
for some permuation matrix $P^* \in \perm_K$,
since in each case the  communities will be estimated in an unknown (arbitrary) order. 
There is no way a priori to guarantee the same order of estimated communities in the two cases. This is a manifestation of the permutation ambiguity in~\eqref{eq:perm:equiv}. 

An alternative way of formulating the issue is to assume that the population matrix $B_2$ carries the ambiguous permutation:
$
    B_2 = P^* B_1 P^{*\top}
$
while the estimated $\Bh_r$ are close to their respective population version, with identity permutation for the matching: $\Bh_r \approx B_r$ for $r=1,2$.  To simplify future discussions, let us introduce the following notation for \emph{exact matching}:
\begin{align}
\label{eqn:exact}
    \permto{B_1, P^*, B_2}\quad  \iff
    B_2 = P^*B_1P^{*T}.
\end{align}
The above discussion shows that two-sample testing for SBMs requires solving the following subproblem:
\begin{prob}
    \label{def:noisy:match}
 Assume that $\permto{B_1, P^*, B_2}$ for $B_1,B_2 \in [0,1]^{K \times K}$. The \emph{noisy graph isomorphism} (or graph matching) 
 problem is to recover a matching permutation $P^* \in \Pi_K$ between $B_1$ and $B_2$, using only noisy observations
 $\Bh_r \approx B_r$ for $r=1,2$.
\end{prob}

As alluded to in Section~\ref{sec:related_work}, this problem is in general hard. In particular, finding an optimal matching in Frobenius norm~\eqref{eq:optimal:matching} reduces
to solving a quadratic assignment problem (QAP), which is NP-hard in general.

\subsection{Spectral matching}
For a large class of weighted networks, we can avoid solving a QAP to recover matching. The idea is the following: If the matrix $B_1$ (and hence $B_2$) has distinct eigenvalues, then an eigenvalue decomposition (EVD) contains the information needed to recover the permutation. To be able to solve the noisy matching problem, we need to assume slightly more:
\begin{defn}
\label{def:eigen}
Let $B \in [0, 1]^{K \times K}$ be the weighted adjacency matrix of an undirected graph on nodes $[K]$, and let $B = \sum_{k=1}^K \lambda_k q_k q_k^{\top}$ be the corresponding EVD. Then, $B$ is called $(\theta,\eta)$-friendly if 
\begin{align}
    \min_{1\le \,k\, \neq \ell \,\le\, K} |\lambda_k - \lambda_\ell| &> \eta,  \\
    |q_k^{\top} 1| &> \theta, \; k \in [K].
\end{align}
We say that $B$ is friendly if it is $(\theta,\eta)$-friendly for some $\theta, \eta > 0$.
\end{defn}
\begin{rem}
    In contrast to \cite{Aflalo2015}, our definition of $(\theta,\eta)$-friendly---which is often termed $(\eps,\delta)$-friendly in the literature---only requires a lower bound on $|q_k^{\top} 1|$ but no upper bound. Also, in the literature, $(\eps,\delta)$-friendly property is often stated as a property of graphs, whereas here we state the concept for 
    weighted adjacency matrices. 
\end{rem}

Our proposed spectral matching algorithm for friendly weighted graphs is outlined in Algorithm~\ref{alg:spectral_matching}. LAP$(\cdot)$ in Line~5 stands for a solution of the \emph{linear assignment problem}:
For a $K \times K$ matrix $Q$, 
\begin{equation}\label{eq:lap:def}
    \lap(Q) := \argmax_{P \in \Pi_K}\; \tr(PQ)
\end{equation}
where $\Pi_K$ is the set of $K \times K$ permutation matrices. It is well-known that LAP can be solved as a linear program. There are also efficient specialized algorithms for solving it, e.g. the Hungarian algorithm \cite{Kuhn1955Hungarian}. 

\begin{algorithm}[t]
\caption{Spectral Matching: $\match(\Bh_1 \to \Bh_2)$}
\label{alg:spectral_matching}
\begin{algorithmic}[1]
    \Require{$\Bh_1$, $\Bh_2$}
    \For{$r = 1, 2$}
        \State Perform EVD on $\Bh_r$ to obtain $\Bh_r = \Qh_r \Lambda_r \Qh_r^{\top}$.
    \EndFor
    \State Recover the diagonal signs 
     $\Sh_{ii} = \sign \bigl( [\Qh_2^{\top} \allone]_i / [\Qh_1^{\top} \allone]_i \bigr)$ and set $\Sh = \diag(\Sh_{ii})$.
    \State Recover the permutation matrix as $\Pt = \lap(\Qh_1 \Sh \Qh_2^{\top})$.
    \State \Return $\Sh$, $\Pt$.
\end{algorithmic}
\end{algorithm}

The rationale behind Algorithm~\ref{alg:spectral_matching} follows from the following lemma:
\begin{lem}\label{lemma:q2 = pq1s}
Consider two $K \times K$  matrices $B_1$ and $B_2$ with EVDs given by 
$B_r = Q_r \Lambda Q_r^{\top},\,r =1,2$,
for some diagonal matrix $\Lambda$ with distinct diagonal entries. Then, a permutation matrix $P^*$ satisfies \begin{align}\label{eq:B1:B2:Ps}
    \permto{B_1, P^*, B_2}
\end{align}
if and only if there exists a diagonal sign matrix $S^*$, such that
\begin{align}\label{eq:Q1:Q2}
    Q_2 = P^* Q_1 S^*.
\end{align}
Moreover if $B_1$ is $(\theta, \eta)$ friendly, then there is at most one $P^*$ satisfying~\eqref{eq:B1:B2:Ps}.
 \end{lem}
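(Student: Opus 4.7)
The plan is to handle the three assertions in sequence, exploiting that distinct eigenvalues pin down the EVD up to column signs, and that the friendly condition forces those signs to be $+1$.

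For the easy direction, if $Q_2 = P^* Q_1 S^*$ with $S^*$ a diagonal $\pm 1$ matrix, then $S^{*2} = I$ and $S^*$ commutes with the diagonal $\Lambda$, so
\[
Q_2 \Lambda Q_2^{\top} = P^* Q_1 S^* \Lambda S^{*\top} Q_1^{\top} P^{*\top} = P^* Q_1 \Lambda Q_1^{\top} P^{*\top} = P^* B_1 P^{*\top},
\]
giving $\permto{B_1, P^*, B_2}$. Conversely, if $B_2 = P^* B_1 P^{*\top}$, substitute $B_1 = Q_1 \Lambda Q_1^{\top}$ to obtain $B_2 = (P^* Q_1)\Lambda(P^* Q_1)^{\top}$. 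This is an orthogonal diagonalization of $B_2$ with spectrum $\Lambda$. Since $B_2 = Q_2 \Lambda Q_2^{\top}$ is another such diagonalization and the diagonal entries of $\Lambda$ are distinct, each eigenspace is one-dimensional, so the columns of $P^* Q_1$ and $Q_2$ that correspond to the same eigenvalue must agree up to a sign. Collecting these signs into a diagonal sign matrix $S^*$ yields $Q_2 = P^* Q_1 S^*$.

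For uniqueness under the $(\theta,\eta)$-friendly hypothesis, suppose $P_1^*$ and $P_2^*$ both satisfy~\eqref{eq:B1:B2:Ps}. By the first part, there exist diagonal sign matrices $S_1, S_2$ with $Q_2 = P_i^* Q_1 S_i$ for $i=1,2$. Equating and setting $R := P_1^{*\top} P_2^*$ (a permutation matrix) and $S := S_2 S_1$ (a diagonal sign matrix), we get
\[
Q_1 = R\, Q_1\, S.
\]
Reading column $k$: $q_k = S_{kk}\, R\, q_k$, i.e. $R q_k = S_{kk}\, q_k$. Left-multiplying by $\allone^{\top}$ and using $\allone^{\top} R = \allone^{\top}$ (any permutation matrix preserves the all-ones vector on the left), we obtain $\allone^{\top} q_k = S_{kk}\, \allone^{\top} q_k$. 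The friendly hypothesis gives $|\allone^{\top} q_k| > \theta > 0$, so $S_{kk} = 1$ for every $k$, i.e. $S = I$. Then $Q_1 = R Q_1$, and invertibility of $Q_1$ forces $R = I$, hence $P_1^* = P_2^*$.

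No step is genuinely hard; the only subtlety worth underlining is the column-sign bookkeeping in the second part (matching eigenvectors of equal eigenvalues between the two orthogonal diagonalizations of $B_2$) and the use of $\allone^{\top} R = \allone^{\top}$ together with $|\allone^{\top} q_k| > \theta$ in the third part to rule out sign flips—this is precisely where the $\theta$ portion of the friendly condition is used, while the $\eta$ portion was used to guarantee distinct eigenvalues.
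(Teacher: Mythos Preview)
Your proof is correct. For the equivalence part, you invoke the standard fact that an orthogonal diagonalization with distinct eigenvalues is unique up to column signs, whereas the paper reduces to showing that an orthogonal $Q$ satisfying $Q\Lambda Q^{\top}=\Lambda$ must be a diagonal sign matrix and proves this directly via a shift-of-eigenvalues argument; your route is shorter and equally valid. For the uniqueness part under friendliness, your argument is essentially the same as the paper's: both reduce to $Q_1 = R Q_1 S$ for a permutation $R$ and sign matrix $S$, use $\allone^{\top} R = \allone^{\top}$ together with $|\allone^{\top} q_k|>\theta$ to force $S=I$, and then conclude $R=I$ from invertibility of $Q_1$. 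If anything, your case analysis is slightly cleaner than the paper's, which splits on $S=I$ versus $S\neq I$ before reaching the same contradiction.
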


To understand the idea, let us assume that we apply Algorithm~\ref{alg:spectral_matching} to matrices $B_1$ and $B_2$ that have an exact matching as in~\eqref{eq:B1:B2:Ps}. Then, according to Lemma~\ref{lemma:q2 = pq1s}, 
\[
Q_2^\top \allone = S^* Q_1^\top P^{*\top} \allone = S^* Q_1^\top \allone.
\]
Recalling that $S^* = \diag(S^*_{ii})$, we obtain $S^*_{ii} = [Q_2^\top \allone]_i/[Q_1^\top \allone]_i$. For robustness in the noisy case, we can also take the sign of the right-hand side since $S^*_{ii}$ is known to be in $\{-1,1\}$. Once we recover the sign matrix $S^*$, and noting that $S^*$ is its own inverse,  we have from~\eqref{eq:Q1:Q2} that $P^* = Q_2 S^* Q_1^\top$. However, this equality only gives a valid permutation matrix in the noiseless case. In the noisy case, we can instead solve the optimization problem suggested by~\eqref{eq:Q1:Q2}:
\begin{align*}
    \argmin_{P  \in \Pi_K } \fnorm{Q_2 - P Q_1 S^*}^2 = \argmax_{P \in \Pi_K  } \tr(Q_2^\top P Q_1 S^*) = \lap(Q_1 S^* Q_2^\top)
\end{align*}
where the first equality is by $\fnorm{Q_2}^2 = \fnorm{P Q_1 S^*}^2 = K$ and second equality by the circular invariance of the trace. The final LAP problem is exactly what Algorithm~\ref{alg:spectral_matching} solves. 

 In the sequel, we write $\match(\Bh_1 \to \Bh_2)$ to denote a matching done by Algorithm~\ref{alg:spectral_matching} for input matrices $\Bh_1$ and $\Bh_2$. Although we focus on this algorithm for the most part, there are other efficient approaches inspired by Lemma~\ref{lemma:q2 = pq1s} which we briefly discuss in Section~\ref{sec:discuss}.

\section{Test construction}
\label{sec:test_construction}

We are now ready to describe our main algorithm for constructing an SBM two-sample test. First, let us briefly describe how the operator $\conn$, alluded to in Section~\ref{sec:matching:challange}, is implemented. Given an $n \times n$ adjacency matrix $A$, we apply a community detection algorithm (e.g. regularized spectral clustering~\cite{amini2013pseudo} or Bayesian community detection \cite{shen2022bayesian}) to obtain label vector $\zh = (\zh_1,\dots,\zh_n) \in [K]^n$.
Let $n_k = \sum_{i=1}^n \ind{\zh_i = k}$ be the number of nodes in the estimated community $k$. 
Define the block-sum operator $\sums : \reals^{n \times n} \times [K]^n \to \reals^{K \times K}$ and block-count operator $\counts: [K]^n \to \reals^{K \times K}$ as
\begin{align}
     [\sums(A, z)]_{k \ell} &= \sum_{i,j} A_{ij} \ind{z_i = k, z_j = \ell}, \\
    [\counts(z)]_{k\ell} &= n_k(z) \bigl(n_\ell(z)  - 1\{k = \ell\} \bigr).
\end{align}
where $n_k(z) = \sum_{i=1}^n 1\{z_i = k\}$.
Then, we have
\begin{align}\label{eq:conn:operator}
    \conn(A, z) = 
    \sums(A, z)  \oslash \counts(z),
\end{align}
where $\oslash$ denotes the Hadamard (i.e., elementwise) division of matrices. For diagonal blocks ($k = \ell$) a double-counting occurs in both the numerator and denominator of~\eqref{eq:conn:operator} which can be avoided, for computational efficiency, by restricting the sums to $i < j$.

\subsection{Main algorithm}
Algorithm~\ref{alg:sbm-ts} summarizes our approach to SBM two-sample testing. To simplify the discussion, we refer to permutation matrices that match two networks, simply as ``matching''.
We can divide the procedure into three stages: 
\begin{enumerate}
    \item First, we match each network in a given sample $r \in \{1,2\}$ to the first network in the sample (steps~\ref{step:randomize}--\ref{step:match:to:first}). 
    
    \item Having the (individual) matching matrices  $\Ph_{rt}$, we can align the estimated $\Bh_{rt}$ for each network to compute a single estimate $\Bh_r$ for each sample $r$. Then, we can perform a global matching of the two samples (steps~\ref{step:Bh:r}--\ref{step:global:matching}).
    
    \item  Armed with the global matching $\Ph$, in the third stage (steps~\ref{step:Shr:mhr}--\ref{step:Th}), we align all the block sums and counts across all the samples to form stronger estimates of $B_1$ and $B_2$, as $\Bh^{(1)} := \Sh_1  \oslash \mh_1$ and $\Bh^{(2)} := \Sh_2' \oslash \mh_2'$. These two estimates are also properly aligned, hence can be compared elementwise, leading to the formation of test statistic $\Th_n$ in~\eqref{eq:Th:def}. We also compute a global (i.e. pooled) estimate of the variance ($\sigh_{k\ell}^2$) for the $(k,\ell)$th entry of the difference $\Bh^{(1)} - \Bh^{(2)}$ in step~\ref{step:variance}. This allows use to properly normalize each entry of the difference  before aggregating  them to form $\Th_n$.

\end{enumerate}

\begin{algorithm}[t]
\caption{SBM Two-Sample Test (SBM-TS)}
\begin{algorithmic}[1] \setstretch{1.25} 
    \Require{Adjacency matrices $A_{rt}$ and initial label estimates $\zh_{rt}^{(0)}$, for $t \in [N_r]$ and $r = 1,2$.}
    \Ensure{Two-sample test statistic $\Th_n$.}
    \Statex \textit{Match to the first network within each sample $r$: }
    \For{$t = 1,2,\dots,N_r$ and $r = 1,2$}
        \State Set $\zh_{rt}= \sigma_{rt} \circ \zh_{rt}^{(0)}$ for independent random permutation $\sigma_{rt}$. \label{step:randomize}
        \State Set $\Sh_{rt} = \sums(A_{rt}, \zh_{rt})$ and $\mh_{rt} = \counts(\zh_{rt})$.
        \State Set $\Bh_{rt} = \Sh_{rt} \oslash \mh_{rt}$.  \label{step:bhat_rt}
        \State Set $\Ph_{rt} = \match( \Bh_{rt} \to \Bh_{r1})$.
        \label{step:match:to:first}
    \EndFor
    \Statex \textit{Find the global matching permutation:}
    \State Set $\Bh_r = \frac1{N_r} \sum_{t=1}^{N_r} \Ph_{rt} \Bh_{rt} \Ph_{rt}^{\top}$ for $r = 1,2$.
    \label{step:Bh:r}
    \State Set $\Ph =\match(\Bh_{2} \to \Bh_{1})$.
    \label{step:global:matching}
    
    \Statex \textit{Align the sums and counts, across all samples, and form aggregate estimates:}
    \State Set $\Sh_r = \sum_{t=1}^{N_r} \Ph_{rt} \Sh_{rt} \Ph_{rt}^{\top}$ and $\mh_r = \sum_{t=1}^{N_r} \Ph_{rt} \, \mh_{rt}\, \Ph_{rt}^{\top}$ for $r=1,2$.
    \label{step:Shr:mhr}
    \State Set $\Sh_2' = \Ph \Sh_2 \Ph^{\top}$ and $\mh'_2 = \Ph \mh_2 \Ph^{\top}$.
    \State Let $\Bh = (\Bh_{k\ell})$ where 
    $
    \Bh_{k\ell} = \dfrac{
        \Sh_{1 k\ell} \,+\, \Sh'_{2 k\ell}
    }{
        \mh_{1 k\ell} \,+\, \mh'_{2 k \ell}
    }
    $
    and set $\sigh_{k\ell}^2  = \Bh_{k\ell}(1-\Bh_{k\ell}).$
    \label{step:variance}

    \State Let $\Bh^{(1)} =\Sh_1  \oslash \mh_1$ and $\Bh^{(2)} := \Sh_2' \oslash \mh_2'$. \hfill (\textit{Stronger estimates})
    \label{step:Bhr:final}
    
    \State Let $\hh_{k\ell}$ be the harmonic mean of $\mh_{1k\ell}$ and $\mh'_{2 k \ell}$ and form the test statistic:
    \vspace{-2ex}\begin{align}\label{eq:Th:def}
     \Th_n := \sum_{k \le \ell} \frac{\hh_{k\ell}}{2\sigh^2_{k\ell}}  \bigl( 
     \Bh^{(1)}_{k\ell} -\Bh^{(2)}_{k\ell} \bigr)^2.
    \end{align}
    \vspace{-2ex}
    \label{step:Th} 
\end{algorithmic}
\label{alg:sbm-ts}
\end{algorithm}

To get an intuition for the final statistic, let us assume that all the matchings are correctly recovered and set them to identity. Let us also assume that the community detection algorithm has perfectly recovered the labels for each network, so that $\mh_r$ is equal to $m_r := \sum_{t=1}^{N_r} \counts(z_{rt})$ the ``true''  aggregate block count matrix. Assume that we are under null and let $B = B_1 = B_2$.  

Under the above  assumptions, $\Sh_{1k\ell} \sim \bin(m_{1k\ell}, B_{k\ell})$ and hence $\Bh^{(1)}_{k\ell} = \Sh_{1k\ell}/m_{1k\ell}$ is approximately distributed as $N(0, \sigma_{k\ell}^2/m_{1k\ell})$ where $\sigma_{k\ell}^2 = B_{k\ell}(1-B_{k\ell})$. A similar statement holds for  $\Bh^{(2)}$. Then,
\[
\sqrt{{h_{k\ell}}/{(2\sigma_{k\ell}^2)}}(\Bh^{(1)}_{k\ell} -\Bh^{(2)}_{k\ell}) \;\stackrel{d}{\approx}\; N(0,1)
\]
where $h_{k\ell}$ is the harmonic mean of $m_{1k\ell}$ and $m_{2 k\ell}$. These all suggest that if all the stars align, $\Th_n$ will be approximately distributed as $\chi^2_{K(K+1)/2}$. In practice, the matchings are nontrivial and the block counts and sums are based on imperfect labels.  Nevertheless, in Theorem~\ref{thm:null:main} we formalize the above intuition and show that $\Th_n$ indeed has the expected asymptotic distribution, under mild conditions.

\medskip

It is worth noting that our final estimates of $B_r$ are $\Bh^{(r)}$ in step~\ref{step:Bhr:final} rather than $\Bh_r$ in step~\ref{step:Bh:r}. Even if we ignore the matching issue, $\Bh^{(r)}$ are better estimates. To see this, consider i.i.d. draws $X, Y\sim \bin(n, p)$. Then, both $(X+Y) / (n+m)$ and $\frac12((X/n) + (Y/m))$ are unbiased for $p$, but the former has lower variance in general.
The randomization in step~\ref{step:randomize} is a technical device, ensuring that the matching permutations in step~\ref{step:match:to:first} are independent of the data $\{A_{rt}\}$, irrespective of the community detection algorithm used to produce the labels; see Lemma~\ref{lem:randomization}.

\section{Theory}

We start by showing that our proposed matching Algorithm~\ref{alg:spectral_matching} consistently recovers the correct permutations. We then derive the asymptotic null distribution of the statistic, followed by a result showing the consistency of the test under sparse alternatives.

\subsection{Matching consistency}
\label{sec:match:consist}

We consider connectivity matrices $B_1$ and $B_2$ that are $(\theta,\eta)$-friendly. Recall our shorthand notation for an exact matching, introduced in~\eqref{eq:B1:B2:Ps}. Similarly, it is helpful to introduce the following graphical mnemonic for approximate matching:
\begin{align}
\label{eqn:approximate}
\apermto{B_1, P, B_2} \quad \iff \quad 
   \fnorm{B_2 - P B_1 P ^{\top}}
    \leq \frac{\theta \eta}{2\sqrt{2}K}.
\end{align}

Then, we have following guarantee for the matching algorithm:

\begin{thm}[Matching consistency]\label{prop:matching:recovery}
Suppose that $B_1$ and $B_2$ are $K\times K$ connectivity matrices that are $(\theta, \eta)$-friendly with $\theta < \frac{1}{4\sqrt{K}}$, and with EVDs given by 
$B_r = Q_r \Lambda Q_r^{\top},\,r =1,2$. Moreover, assume 
$\permto{B_1, P^*, B_2}$
for some permutation matrix $P^*$. For $r = 1,2$, let $\Bh_r$ be an estimate of $B_r$ satisfying 
\begin{equation}\label{eq:eps:delta:assump}
    \apermto{\Bh_r, P_r, B_r}
\end{equation}
for some permutation matrices $P_r$. Let $\Qh_1,  \Qh_2$, $\Sh$ and $\Pt$ be as defined in Algorithm~\ref{alg:spectral_matching}, and let 

\begin{equation}
    S_r = \argmin_{S \in  \Psi_K } \fnorm{Q_r - P_r \Qh_r S}, \quad r=1,2,
\end{equation}
where $\Psi_K$ is the set of $K \times K $ diagonal sign matrices. Then, the following holds:
\begin{enumerate}[(a)]
\item \emph{Sign recovery:} $\Sh = \St := S_1 S^* S_2$ with $S^*$ as in Lemma \ref{lemma:q2 = pq1s}.

    \item\emph{Permutation recovery:} $\lap(\Qh_1 \St \Qh_2^{\top}) = \{\Pt\}$ where $\Pt := P_2^{\top} P^* P_1$, and 
	\begin{equation}
	   \fnorm{\Bh_2 - \Pt \Bh_1 \Pt^{\top}} \leq 
           \sum_{r=1}^2 \fnorm{B_r - P_r\Bh_r  P_r^{\top}}. 
	\end{equation}
\end{enumerate}
\end{thm}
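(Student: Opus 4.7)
The plan is to apply a Davis--Kahan-type perturbation bound to the noisy eigenvectors $\Qh_r$ and then drive both conclusions from it. The hypothesis $\fnorm{B_r - P_r \Bh_r P_r^\top} \leq \theta\eta/(2\sqrt{2}K)$ together with $\theta < 1/(4\sqrt K)$ keeps the perturbation well below $\eta/2$, so Weyl's inequality places the eigenvalues of $\Bh_r$ in the same order as those of $B_r$. Applying the Yu--Wang--Samworth form of Davis--Kahan eigenspace-by-eigenspace and summing over $k \in [K]$ yields
\[
\fnorm{F_r} \,:=\, \fnorm{Q_r - P_r \Qh_r S_r} \,\leq\, \frac{2\sqrt{2K}}{\eta}\, \fnorm{B_r - P_r \Bh_r P_r^\top} \,\leq\, \frac{\theta}{\sqrt K},
\]
and, since $S_r^2 = I$, the expansion $\Qh_r = P_r^\top Q_r S_r - P_r^\top F_r S_r$ that feeds both parts of the proof. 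For part (a), transposing this expansion and applying $\allone$ (using $P_r^\top \allone = \allone$) gives $\Qh_r^\top \allone = S_r Q_r^\top \allone - S_r F_r^\top \allone$ whose entrywise error is at most $\sqrt K \fnorm{F_r} \leq \theta$. Since $|[Q_r^\top \allone]_i| > \theta$ by friendliness, signs are preserved: $\operatorname{sign}([\Qh_r^\top \allone]_i) = (S_r)_{ii}\operatorname{sign}([Q_r^\top \allone]_i)$. Substituting $Q_2 = P^* Q_1 S^*$ from Lemma~\ref{lemma:q2 = pq1s} into the $r=2$ case, the common factor $\operatorname{sign}([Q_1^\top \allone]_i)$ cancels in the ratio defining $\Sh_{ii}$, leaving $(S_1 S^* S_2)_{ii} = (\St)_{ii}$.

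For part (b), substituting the expansion into $M := \Qh_1 \St \Qh_2^\top$ with $\St = S_1 S^* S_2$, the leading term telescopes to $P_1^\top Q_1 S^* Q_2^\top P_2$; the identity $Q_1 S^* Q_2^\top = P^{*\top}$ (a consequence of $Q_2 = P^* Q_1 S^*$ and $Q_1 Q_1^\top = I$) identifies this with $\Pt^\top$ where $\Pt := P_2^\top P^* P_1$. Collecting cross- and second-order $F_r$ contributions and using $\fnorm{AB} \leq \fnorm{A}\,\|B\|_{\mathrm{op}}$ together with the orthogonality of $P_r^\top Q_r S_r$ and $S_1 S^* S_2$, the remainder $R := M - \Pt^\top$ obeys
\[
\fnorm{R} \,\leq\, \fnorm{F_1} + \fnorm{F_2} + \fnorm{F_1}\,\fnorm{F_2} \,\leq\, \frac{2\theta}{\sqrt K} + \frac{\theta^2}{K}.
\]
Any two distinct permutations disagree in at least two positions, so $\tr(P\Pt^\top) \leq K-2$ for every $P \neq \Pt$. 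Writing $\tr(PM) - \tr(\Pt M) = (\tr(P\Pt^\top) - K) + \tr((P-\Pt)R)$ and bounding $|\tr((P-\Pt)R)| \leq \fnorm{P-\Pt}\,\fnorm{R} \leq 2\sqrt K\,\fnorm{R}$ via Cauchy--Schwarz, the hypothesis $\theta < 1/(4\sqrt K)$ produces $2\sqrt K\,\fnorm{R} < 2$, hence $\tr(\Pt M) > \tr(P M)$ strictly and $\lap(M) = \{\Pt\}$.

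To close, write $\Bh_r = P_r^\top B_r P_r + E_r^B$ with $\fnorm{E_r^B} = \fnorm{B_r - P_r \Bh_r P_r^\top}$. The compositions $\Pt P_1^\top = P_2^\top P^*$ and $P_1 \Pt^\top = P^{*\top} P_2$, together with the exact matching $B_2 = P^* B_1 P^{*\top}$, make the population parts cancel exactly:
\[
\Bh_2 - \Pt \Bh_1 \Pt^\top \,=\, E_2^B - \Pt E_1^B \Pt^\top,
\]
and the triangle inequality (using that $\Pt$ is orthogonal) yields the advertised Frobenius bound. The main obstacle I anticipate is the tight calibration of the Davis--Kahan constant against the $1/\sqrt K$ threshold coming from the $K-2$ gap in the LAP argument: the prefactors $1/(4\sqrt K)$ on $\theta$ and $\theta\eta/(2\sqrt 2 K)$ on the input error appear essentially tight for ensuring that both the sign-preservation step and the LAP strict-separation step succeed under a single hypothesis.
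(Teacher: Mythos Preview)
Your proof is correct and aligns closely with the paper's argument for part~(a) and for the final Frobenius inequality. The one genuine divergence is in how you establish uniqueness of the LAP solution in part~(b).

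The paper factors $\Qh_1 \St \Qh_2^{\top} = \Pt^{\top}(I + \Delta_0)$ and then invokes a separate perturbation lemma: $\lap(I+\Delta_0) = \{I\}$ whenever $\maxnorm{\Delta_0} < 1/2$. To verify this entrywise bound it works through the $\ell_1 \to \ell_1$ and $\ell_\infty \to \ell_\infty$ operator norms, using $\mnorm{\Delh_r}_1 \le \theta$ columnwise and $\mnorm{\Qh_1}_\infty \le \sqrt K$ from unit-norm rows, to get $\maxnorm{\Delta_0} \le 2\theta\sqrt K < 1/2$. You instead keep everything in Frobenius norm, write $M = \Pt^{\top} + R$ with $\fnorm{R}$ bounded directly by $\fnorm{F_1} + \fnorm{F_2} + \fnorm{F_1}\fnorm{F_2}$, and close with the elementary trace argument exploiting the gap $K - \tr(P\Pt^{\top}) \ge 2$ for $P \neq \Pt$.

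Your route is a bit more self-contained: it avoids the auxiliary LAP-perturbation lemma and the mixed operator-norm bookkeeping, at the cost of a somewhat cruder separation estimate (you use $\fnorm{P-\Pt}\le 2\sqrt K$ uniformly, whereas the paper's $\maxnorm{\cdot}<1/2$ criterion is sharper per entry). Both arguments land on the same threshold $\theta < 1/(4\sqrt K)$, so neither buys a better constant; the paper's packaging via a reusable lemma is slightly cleaner for downstream use, while yours is more direct.
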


The theorem is stated with general permutation matrices $P_r$ which is helpful in subsequent proofs. If we assume $P_r = I$ for simplicity, the result suggests that once $\Bh_r$ is close enough to $B_r$, then $\Pt = P^*$, that is Algorithm~\ref{alg:spectral_matching} recovers the correct matching exactly.

\begin{rem}
    One can also ask whether our matching algorithm is \emph{self-consistent}? That is, if we reverse the order of the $\Bh_1$ and $\Bh_2$, do we get permutations that are transposes of each other? The answer is yes. Notice that if we reverse the order of $\Bh_1$ and $\Bh_2$, the sign matrix $\hat{S}$ does not change. Moreover, 
    \[
    \permto{B_1, P^*, B_2}  \quad \text{implies}\quad    \permto{B_2, (P^*)^{\top}, B_1}.
    \]Hence,   $\lap(\Qh_2 \St \Qh_1^{\top}) = \{P^{\prime}\}$     where  $P^{\prime}   =    P_1^{\top} (P^*)^{\top} P_2 =  (P_2^{T} P^*  P_1 )^{\top} =  \Pt^{\top} $.  
\end{rem}

\begin{figure}[t]
	\centering
	\vspace{1.5cm}
	\begin{tikzcd}[column sep=large, row sep=large,transform canvas={scale=1.25}]
		\Bh_1 \arrow[dashed]{r}{P_1} 
            \arrow[bend left=20]{d}[swap]{\Pt} 
                & B_1 \arrow{d}{P^*} \\
		\Bh_2 \arrow[dashed]{r}[swap]{P_2} & B_2
	\end{tikzcd}
	\vspace{1.5cm}
	\caption{\label{fig:schematic}Schematic diagram of permutation recovery in Theorem~\ref{prop:matching:recovery}. The solid and dashed straight arrows correspond to exact and approximate match. The bent arrow represents an application of the matching algorithm $\match$.}  
\end{figure}
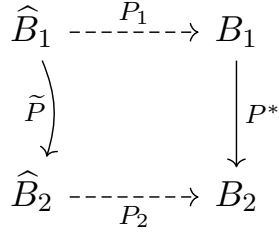

To gain a deeper understanding of why our target permutation matrix is $P_2^{\top} P^* P_1$, let us analyze Figure \ref{fig:schematic}. Within this illustration, each edge symbolizes a matching between matrices based on both the direction of the edge and the permutation matrix associated with it. To clarify the nature of these matchings, we use different types of arrows:
\begin{enumerate}
    \item Dashed straight arrows represent approximate matching. For instance, the top edge corresponds to $\apermto{\Bh_1, P_1, B_1}$, following the notation from Equation (\ref{eqn:approximate}).

\item In contrast, the solid straight solid arrow signifies exact matching corresponding to $\permto{B_1, P^*, B_2}$, with the notation from (\ref{eqn:exact}).
\end{enumerate}

Considering that the inverse of a permutation matrix is its transpose, we can trace the path from $\Bh_1$ to $\Bh_2$ in Figure \ref{fig:schematic}. We start from $\Bh_1$, progress through $B_1$, then $B_2$, and finally arrive at $\Bh_2$. The permutation matrices associated with this path are $P_1$, $P^*$ and $P_2^{\top}$ (the inverse of $P_2$). By multiplying these matrices together, we obtain the desired permutation matrix, which is $P_2^{\top} P^* P_1$.

\subsection{Null distribution}
To analyze the statistical properties of the test, we make the following ``sparsity'' and ``size'' assumptions
\begin{align}
 \frac{\nu_n}{n} &\,\le\, B_{k\ell} 
 \,\le\, \frac{C_1 \nu_n}{n}, \label{eq:sparsity} \\
   n &\,\le\, n_{rt} \le C_2 n \label{eq:size}
\end{align}
for all $k, \ell \in [K]$ and $t \in [N_r], r = 1,2$.
We also write $\pi_k = \pr(z_{rti} = k)$, recalling that we are under the null,
and set $\pmin = \min_k \pi_k$.

Let us fix some $\kappa \in (0,1)$ and $\alpha > 0$ and let $\beta = 1/(\kb \pmin)$ where $\kb = 1-\kappa$. 
For any $N$, let 
\begin{align}
     \delta_n^{(N)} := \sqrt{\frac{3 \beta^2 \alpha \log n}{ N n \nu_n}}.
\end{align}
For convenience, we assume that ($n$ is large enough so that)
\begin{align}\label{assu:simple:scaling}
    \delta_n^{(1)} \le 1, \quad 
\frac{\log n}{n} \le \frac{\kappa^2 \pmin}{3 \alpha}.
\end{align}
Moreover, define
\[
\gamma_n = C_1 \Bigl( 56 \beta^3 \cdot \epst + \delta_n^{(1)} \Bigr), \quad\text{where}\quad \epst := \max_{\substack{t \, \in\,  [N_r]\\  r\, =\, 1,2}} \,\mis(z_{rt}, \zh_{rt}).
\]
Suppose that $B$ is $(\theta, \eta)$-friendly with $\theta < \frac1{4 \sqrt{K}}$
and consider the condition
\begin{align}\label{assu:gamma}
\frac{\nu_n}n  \gamma_n \le \frac{\theta \eta}{2 \sqrt 2 K}. 
\end{align}
Since $B$ satisfies~\eqref{eq:sparsity}, in general $\eta \asymp \nu_n/ n$ but since eigenvectors are normalized, we can expect $\theta \asymp 1$. These scalings hold, for example, when $B = (\nu_n / n) B^0$ for fixed matrix $B^0$ which is $(\theta, \eta_0)$-friendly. Then, as long as $\gamma_n = o_p(1)$, we expect~\eqref{assu:gamma} to hold.

The following result guarantees an asymptotic null distribution for our proposed test statistic:

\begin{thm}[Null distribution]\label{thm:null:main}
    Assume that Algorithm \ref{alg:sbm-ts}
    is applied to two SBM samples, with sizes satisfying~\eqref{eq:size}, and the same connectivity matrix $B$ satisfying~\eqref{eq:sparsity} and $\maxnorm{B} \le 0.99$. Moreover, assume that $B$ is $(\theta,\eta)$-friendly with $\eta = \Omega(\nu_n/n)$
    and $\theta = \Omega(1)$.
    Let $N = \max\{N_1,N_2\}$ and assume that for some $\kappa \in (0,1)$ and $\alpha > 0$ and $c > 0$, we have
    \[
    \gamma_n = o_p(1), \quad \sqrt{ N \nu_n n} \,\epst   \,=\, o_p(1), \quad 
    N \nu_n n \to \infty,\quad 
    N  = o(n^\alpha),
    \]
    and $\min\{N_1, N_2\} \ge c N$.
    Then, for $\Th_n$, the output of Algorithm~\ref{alg:sbm-ts}, we have 
    \[
    \Th_n \convd \chi^2_{K(K+1)/2}.
    \]
\end{thm}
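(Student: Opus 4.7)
The overall strategy is a three-stage reduction: first establish that with high probability all matchings recovered by Algorithm~\ref{alg:sbm-ts} are correct, then pass from estimated to oracle (true-label) block sums, and finally apply a multivariate CLT to the resulting sums of independent Bernoullis. I would begin by invoking the randomization step~\ref{step:randomize} together with the referenced Lemma~\ref{lem:randomization} to argue that, after relabeling, one may without loss of generality assume the common connectivity matrix is $B$ and that the true matching permutations (relative to which $\zh_{rt}$ approximates $z_{rt}$) are the identity. The misclassification error $\epst$ is unaffected by this gauge fixing.

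Next, I would apply Theorem~\ref{prop:matching:recovery} to each pair $(\Bh_{rt},\Bh_{r1})$ and to the global pair $(\Bh_1,\Bh_2)$. The key estimate is a high-probability bound of the form $\fnorm{\Bh_{rt} - P_{rt}^{\top} B P_{rt}} \lesssim (\nu_n/n)\,\gamma_n$ for some permutation $P_{rt}$, split as a misclassification contribution $\lesssim (\nu_n/n)\,\epst$ (controlled through the $56\beta^3\epst$ term) and a Bernoulli concentration contribution $\lesssim (\nu_n/n)\,\delta_n^{(1)}$ (via a Bernstein or Hoeffding argument on each block, combined over the $O(K^2)$ blocks and the $N$ networks). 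Under the assumption $\gamma_n = o_p(1)$ together with~\eqref{assu:gamma} implied by $\eta \asymp \nu_n/n$, $\theta \asymp 1$, the hypothesis of Theorem~\ref{prop:matching:recovery} holds with probability $\to 1$, so every $\Ph_{rt}$ and $\Ph$ equals the correct permutation with high probability. On this event, I may assume all matchings are identity.

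With matchings identified, define the \emph{oracle} quantities $\Sh_r^{\mathrm o} = \sum_t \sums(A_{rt},z_{rt})$ and $\mh_r^{\mathrm o} = \sum_t \counts(z_{rt})$, and let $\Bh^{(r),\mathrm o} = \Sh_r^{\mathrm o}\oslash \mh_r^{\mathrm o}$. The core of the argument is the entrywise multivariate CLT: conditional on $\{z_{rt}\}$, $\Sh^{\mathrm o}_{r,k\ell}$ (for $k\le\ell$) is a sum of independent $\bern(B_{k\ell})$ random variables over disjoint index sets, hence the $K(K+1)/2$-dimensional vector with entries $\sqrt{\mh^{\mathrm o}_{r,k\ell}}\bigl(\Bh^{(r),\mathrm o}_{k\ell}-B_{k\ell}\bigr)/\sigma_{k\ell}$ is (by a standard Lindeberg check using $N\nu_n n\to\infty$) jointly asymptotically $\mathcal N(0,I)$; independence across $r$ and near-determinism of $\mh^{\mathrm o}_{r,k\ell}/(N_r n_{rt}^2\pi_k\pi_\ell)$ (by a Bernstein bound on multinomial counts) then yield
\[
\sqrt{\tfrac{h^{\mathrm o}_{k\ell}}{2\sigma^2_{k\ell}}}\bigl(\Bh^{(1),\mathrm o}_{k\ell}-\Bh^{(2),\mathrm o}_{k\ell}\bigr)
\xrightarrow{d} \mathcal N(0,1)
\]
jointly in $k\le\ell$ with independent limit coordinates, where $h^{\mathrm o}_{k\ell}$ is the harmonic mean of $\mh^{\mathrm o}_{1,k\ell}$ and $\mh^{\mathrm o}_{2,k\ell}$.

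The remaining step is to replace oracle quantities by their plug-in analogues. Writing $\Sh_{r,k\ell} - \Sh^{\mathrm o}_{r,k\ell}$ as a sum of $O(\epst\, N n^2)$ misassigned Bernoullis, each of variance $O(\nu_n/n)$, a standard variance/Hoeffding calculation bounds the entrywise difference by $O_p\bigl(\sqrt{N\nu_n n\,\epst}\bigr)\cdot\sqrt{\nu_n/n}$, so that after normalization by $\sqrt{h_{k\ell}/(2\sigh^2_{k\ell})}\asymp\sqrt{N n/\nu_n}$ the perturbation is $O_p\bigl(\sqrt{N\nu_n n}\,\epst\bigr)+o_p(1)=o_p(1)$ under the stated assumption $\sqrt{N\nu_n n}\,\epst=o_p(1)$. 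The consistency $\sigh^2_{k\ell}/\sigma^2_{k\ell}\to_p 1$ and $\mh_{r,k\ell}/\mh^{\mathrm o}_{r,k\ell}\to_p 1$ follow from the same ingredients plus $\maxnorm{B}\le 0.99$. Slutsky and the continuous mapping theorem then deliver $\Th_n\convd \chi^2_{K(K+1)/2}$.

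The main obstacle I expect is the oracle-replacement step: controlling $\Sh_r - \Sh_r^{\mathrm o}$ sharply enough that its normalized contribution vanishes. The condition $\sqrt{N\nu_n n}\,\epst=o_p(1)$ is tight precisely for this step, and making it rigorous requires carefully accounting for the dependence between the random label error and the Bernoulli entries, which I would handle by conditioning on $\{z_{rt},\zh_{rt}\}$ and using a Hoeffding bound uniformly over the (data-independent, thanks to randomization) misclassified index sets.
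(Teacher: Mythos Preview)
Your overall architecture---matching consistency, then oracle comparison, then CLT---matches the paper's proof, and your identification of the oracle-replacement step as the crux is correct. However, your proposed resolution of that step has a genuine gap. You write that the misclassified index sets are ``data-independent, thanks to randomization,'' and plan to condition on $\{z_{rt},\zh_{rt}\}$ and apply Hoeffding. This is not right: the randomization in step~\ref{step:randomize} composes $\zh_{rt}^{(0)}$ with a uniform permutation $\sigma_{rt}$, which makes the \emph{matching permutation} $\Pt_{rt}$ independent of the data (Lemma~\ref{lem:randomization}), but the set $\{i: z_{rti}\neq \zh_{rti}\}$ still depends on $A_{rt}$ through the community-detection output $\zh_{rt}^{(0)}$. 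Once you condition on $\zh_{rt}$, the entries of $A_{rt}$ are no longer independent Bernoullis, and a Hoeffding argument over the misclassified edges is not available.

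The paper sidesteps this entirely by a deterministic bound. On the high-probability event $\Ec_0=\{\max_{t,j}\sum_i (A_t)_{ij}\le 2C_1 C_2\nu_n\}$ (which depends on $A$ only through row sums and holds with probability $1-O(Nn^{-\alpha})$), any misclassified node can shift the block sum in each block by at most its degree $\lesssim \nu_n$; hence $\maxnorm{S_t-\Sh_t}\lesssim \nu_n n_t\,\epst_t$ and, after aggregation and division by $\mh_r\asymp N n^2$, $\maxnorm{(\Sh_r/\mh_r)-(S_r/m_r)}\lesssim (\nu_n/n)\,\epst$ with no appeal to independence between $\zh$ and $A$. Multiplying by $\sqrt{h_{k\ell}}/\sigma_{k\ell}\asymp \sqrt{Nn^3/\nu_n}$ (note $h_{k\ell}\asymp Nn^2$, not $Nn$ as in your sketch) gives exactly the $\sqrt{N\nu_n n}\,\epst$ that the hypothesis kills. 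So replace your conditional-Hoeffding plan with this degree-bound argument; the rest of your outline then goes through as in the paper, with the CLT handled coordinatewise via Berry--Esseen conditional on $\{z_{rt}\}$ and then combined by a conditional-independence lemma.
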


\begin{remark}
Notice that the conclusion in Theorem \ref{thm:null:main} holds under very mild assumptions. The condition on the misclassification error  is $\epst = o_{p}( (N \nu_n n)^{-1/2}  )$.  To see why this is a very weak requirement, recall Theorem 3.2 from
\cite{ zhang2016minimax} which translated to our notation shows that 
one can achieve misclassification rate as good as
\begin{align}\label{eq:optimal:miss:rate}
\mathbb{E}( \mis(z_{rt}, \zh_{rt}))\,\leq\,  \exp(- c\nu_n(1+ o(1) ) )    
\end{align}
for all $r =1,2$  $t =1,\,\ldots, N_r$, and for a constant $c>0$.  Here, $\zh_{rt}$ is a penalized likelihood estimator. Hence, by Markov’s inequality and union bound we obtain that
\[ 
 \mathbb{P}( \epst \geq \exp(- c\nu_n(1+ o(1) )/2   ) \,\leq\, \exp(- c\nu_n(1+ o(1) ) /2 + \log N_1 + \log N_2 ).
\]
Thus, for $\nu_n$ satisfying  $\nu_n \geq  \max\{\log N_1, \log N_2\}$, we obtain that  $\epst \,=\, o_p(\exp(- c\nu_n(1+ o(1) )/2   )$.  This is in general a much stronger condition than $\epst = o_{p}( (N \nu_n n)^{-1/2}  )$.
\end{remark}

\subsection{Test Consistency}

Next, we show that the test is consistent, in the sense that $\Th_n \to \infty$ under the alternative hypothesis that the two SBMs are different.  For two $K\times K$ matrices $B_1, B_2$, consider the pseudometric
\begin{equation}
    \dmatch(B_1, B_2) := \min_{P \in \Pi_K}
    \fnorm{B_2 - PB_1P^{\top}},
\end{equation}
where $\Pi_K$ is the set of $K \times K$ permutation matrices.

\begin{thm}[Consistency]\label{thm:test:consist}
    Assume that Algorithm~\ref{alg:sbm-ts} is  applied to two SBM samples, with sizes satisfying~\eqref{eq:size}, and connectivity matrices $B_1$ and $B_2$  satisfying~\eqref{eq:sparsity}. For $r=1,2$, let
    \[
    \xi_r := C_1 \bigl( 40 \beta^3 \epst + \delta_n^{(N_r)}   \bigr)
    \]
    and assume that
    \begin{align}\label{eq:dmatch:lower}
    \sqrt{12} K  \max_{r=1,2} \xi_r \le \frac{\dmatch(B_1,B_2)}{ \nu_n/n}.
    \end{align}
    Moreover, assume that~\eqref{assu:simple:scaling} holds and let $\Gc$ be the event that~\eqref{assu:gamma} and $48 C_2 \beta^4 \epst \le 1$ hold. Here, $C_1$ and $C_2$ are the same constants as in (\ref{eq:sparsity}).     Then, with probability at least $1 - 19 N_+ K^2 n^{-\alpha} - \pr(\Gc^c)$,
    \[
    \Th_n \ge \frac{N n^2}{12 \beta^2} \cdot \dmatch(B_1, B_2)^2.
    \]
\end{thm}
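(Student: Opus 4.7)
The plan is to lower-bound $\Th_n$ by controlling the signal $\fnorm{\Bh^{(1)} - \Bh^{(2)}}^2$ and the scale $\min_{k,\ell}\hh_{k\ell}/(2\sigh^2_{k\ell})$ separately, then combining them via the symmetric-matrix inequality $\sum_{k\le \ell} x_{k\ell}^2 \ge \tfrac12\fnorm{X}^2$. This reduces the task to three ingredients: (a) showing that the aggregated estimates $\Bh^{(r)}$ are Frobenius-close to appropriate (data-independent) permutations of $B_r$, so the signal survives the subtraction; (b) lower-bounding the harmonic mean $\hh_{k\ell}$ via concentration of block counts; and (c) the trivial upper bound $\sigh^2_{k\ell}\le 1/4$.

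For (a), I would construct a high-probability event on which every $\Bh_{rt}$ satisfies $\apermto{\Bh_{rt},\widetilde P_{rt}, B_r}$ for some data-independent permutation $\widetilde P_{rt}$ (data-independence is guaranteed by the randomization in step~\ref{step:randomize} via Lemma~\ref{lem:randomization}). The entrywise bound combines Bernstein-type concentration for the true-label block sums with a misclassification-induced perturbation controlled by $\epst$, yielding the $\delta_n^{(N_r)}$ and $\epst$ contributions in $\xi_r$. On $\Gc$, condition~\eqref{assu:gamma} activates Theorem~\ref{prop:matching:recovery} with $B_1 = B_2 = B_r$ and identity $P^*$, so each within-sample matching $\Ph_{rt}$ exactly aligns $\Bh_{rt}$ to $\Bh_{r1}$. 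Consequently, every $\Ph_{rt}\Bh_{rt}\Ph_{rt}^\top$ sits in a common frame $\widetilde B_r := \widetilde P_{r1} B_r \widetilde P_{r1}^\top$, and averaging yields $\fnorm{\Bh^{(r)} - \widetilde B_r} \le K(\nu_n/n)\xi_r$ (entrywise $\to$ Frobenius via the factor $\sqrt{K^2} = K$).

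For the signal, since $\dmatch$ is invariant under permutation conjugation, the triangle inequality gives, for the between-sample matching $\Ph$ returned in step~\ref{step:global:matching},
\[
\fnorm{\Bh^{(1)}-\Bh^{(2)}} \;\ge\; \fnorm{\widetilde B_1 - \Ph\widetilde B_2\Ph^\top} - \sum_{r=1}^2 \fnorm{\Bh^{(r)} - \widetilde B_r} \;\ge\; \dmatch(B_1,B_2) - 2K(\nu_n/n)\max_r \xi_r.
\]
Critically, $\Ph$ need not be an optimal matching under the alternative: $\dmatch$ is the infimum over permutations, so the first term on the right dominates. Hypothesis~\eqref{eq:dmatch:lower} is calibrated so that the subtracted error is at most $\dmatch(B_1,B_2)/\sqrt{3}$, preserving a constant fraction after squaring. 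Combining with $\hh_{k\ell} \ge \min(\mh_{1,k\ell},\mh'_{2,k\ell}) \gtrsim Nn^2\pmin^2$ (Chernoff on multinomial counts, using $\min\{N_1,N_2\}\ge cN$) and $\sigh^2_{k\ell}\le 1/4$, and substituting $\pmin^{-2} = \kb^2\beta^2$, gives $\Th_n \gtrsim N n^2 \beta^{-2} \dmatch(B_1,B_2)^2$; careful bookkeeping yields the stated $1/12$.

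The main obstacle is part (a): propagating misclassification error from $\zh_{rt}$ through the $\conn$ operator. One must bound $\fnorm{\conn(A_{rt},\zh_{rt}) - \conn(A_{rt}, z_{rt})}$ in terms of $\epst$ (this produces the $\beta^3$ factors via ratios of cluster sizes to $\pmin$), and separately control $\fnorm{\conn(A_{rt},z_{rt}) - B_r}$ by Bernstein (the source of $\delta_n^{(N_r)}$). Aggregating across $t\in [N_r]$ improves the noise scaling to $\delta_n^{(N_r)} = \sqrt{3\beta^2\alpha\log n/(N_r n\nu_n)}$ and is the reason $\xi_r$ depends on $N_r$. Once the entrywise bound is established, the remaining steps—Frobenius lifting, triangle inequality, and scale/variance accounting—are routine.
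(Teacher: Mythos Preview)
Your proposal is correct and follows essentially the same route as the paper: establish that the within-sample matchings $\Ph_{rt}$ are exact on the event where~\eqref{assu:gamma} holds (via Theorem~\ref{prop:matching:recovery} and Lemma~\ref{lem:randomization}), deduce $\maxnorm{\Bh^{(r)} - \widetilde B_r} \le (\nu_n/n)\xi_r$ from the multi-network consistency result, extract the signal $\dmatch(B_1,B_2)$ by comparing $\Bh^{(1)}$ to $\Ph \Bh^{(2)} \Ph^\top$ regardless of whether $\Ph$ is optimal, and combine with $\sigh^2_{k\ell}\le 1/4$ and a lower bound on $\hh_{k\ell}$. The paper uses the inequality $(a+b+c)^2 \le 3(a^2+b^2+c^2)$ in place of your direct triangle-inequality-then-square step, which is why the hypothesis constant $\sqrt{12}$ is tuned to produce exactly the factor $1/6$ on the squared signal and hence $1/12$ in the final bound; your version would give a slightly different (in fact better) constant unless you revert to their inequality. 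One small point to tighten: your bound $\hh_{k\ell} \ge \min(\mh_{1,k\ell},\mh'_{2,k\ell}) \gtrsim N n^2 \pmin^2$ cannot come from Chernoff alone, since $\mh$ is built from \emph{estimated} labels---the paper first lower-bounds the true counts $m_{r,k\ell}$ by $N_r n^2/\beta^2$ and then uses $\maxnorm{m_r - \mh_r} \lesssim \beta N n^2 \epst$ together with the condition $48 C_2 \beta^4 \epst \le 1$ (part of $\Gc$) to transfer the bound to $\hh$.
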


Theorem \ref{thm:test:consist} establishes that under conditions similar to those in the null case studied in Theorem \ref{thm:null:main}, provided there exists ample signal to differentiate between the two probability matrices (\ref{eq:dmatch:lower}), our proposed test exhibits consistency. Specifically, in the dense case ($\nu_n = n$), under the alternative $\dmatch(B_1,B_2)$ being a constant $> 0$, we have $\Th_n = \Omega(N n^2)$. In the sparse case, we can assume $\liminf \dmatch(B_1,B_2) / (\nu_n / n) > 0$ in which case $\dmatch(B_1, B_2)^2 \gtrsim (\nu_n / n)^2$ for large $n$, hence $\Th_n = \Omega(N \nu_n^2)$. A set of sufficient conditions for consistency are 
$
\min_{r=1,2} \xi_r  = o_p(1)
$ and $\gamma_n = o_p(1)$.

\begin{rem}[Can joint community detection help?]
In our Algorithm~\ref{alg:sbm-ts},
we estimate the labels $\zh_{rt}$  separately for each adjacency matrix $A_{rt}$, and as a result the misclassification rate $\epst$, referenced in both Theorem~\ref{thm:null:main} and~\ref{thm:test:consist}, is at best the optimal rate of ``individual'' community detection given in~\eqref{eq:optimal:miss:rate}. One might wonder if a form of joint community detection across all networks in the sample could significantly improve this rate.  We argue that the answer is negative. In the setting we consider in this paper, 
the networks within a sample only share information via $B_r$ and $\pi_r$.
Thus,
 an oracle that knows $(B_r, \pi_r)$ gains no advantage from knowing the labels of other networks, as they are independent from one network to the next.
The only improvement such oracle can achieve in~\eqref{eq:optimal:matching} is removing the $o(1)$ term in the exponent. In particular, it is impossible to achieve a misclassification rate that improves as a function of $N_r$, the number of networks in the sample.
\end{rem}

\section{Experimental results}
\label{sec:experiments}
In this section, we provide experimental results on real and simulated networks and compare our proposed test to two existing approaches.

\subsection{Competing methods}

We begin by describing the two tests from the literature against which we benchmark our method:  
    (1)  Network Clustering based on Log Moments (NCLM)~\cite{mukherjee2017clustering} and (2) the Maximum Mean Discrepancy of the Adjacency Spectral Embeddings (ASE-MMD)~\cite{tang2017}. It is worth noting that NCLM is designed as a distance between two adjacency matrices, originally proposed for clustering networks. ASE-MMD can be viewed similarly as a distance between two adjacency matrices, designed for two-sample testing based on samples of size 1. However, there is a natural way of extending these ideas to two-sample testing with multiple networks in each sample, which we will discuss at the end of this section. Throughout this section, we use the terms ``network'' and ``adjacency matrix'' interchangeably, since the resulting distance measures between adjacency matrices will be invariant to node permutations.

To measure a distance between two adjacency matrices, 
NCLM constructs a feature vector for each graph
based on its so-called log-moments. To be more precise, \cite{mukherjee2017clustering} considers the $k$-th \emph{graph moment} of a matrix $A$,
   $m_k(A) = \tr[(A/n)^k]$,
which is
the normalized count of closed walks of length $k$. The feature vector for an adjacency matrix $A$
is then defined as 
\begin{equation*}
    g_J(A) := \bigl(\log m_j(A), \, j \in [J] \bigr) 
\end{equation*}
where $J$ is some positive integer. 
The test statistic for  comparing two adjacency matrices $A_1$ and $A_2$
is naturally given as the  $\ell_2$-distance between the feature vectors of the two graphs:
\begin{equation*}
    d_{\text{NCLM}}(A_1, A_2) := \norm{g_J(A_1)- g_J(A_2)}_2.
\end{equation*}
Our experiments, reported in Appendix~\ref{app:choice:of:J}, suggest that a larger value of $J$ improves performance. However, increasing $J$ quickly increases the computation cost. In the results to follow, we have chosen $J=20$ which provides a reasonable balance between performance and cost.

To form a distance between two adjacency matrices, ASE-MMD first computes the so-called adjacency spectral embedding (ASE) for each matrix. 
For an adjacency matrix $A$, 
consider $|A| = (A^{\top} A)^{1/2}$ and let $|A| = \sum_{i=1}^n \lambda_i u_i u_i^{\top}$ be its eigen-decomposition where $\lambda_1 \ge \cdots \ge \lambda_n \ge 0$ are the eigenvalues and $\{u_i\}$ the corresponding orthonormal basis of eigenvectors.
Then, the adjacency spectral embedding of $A \in \{0,1\}^{n \times n}$ into $\reals^d$ is given by
\begin{equation}
    \hat{X}(A) = U_A^{} \sqrt{S_A} \in \reals^{n \times d}
\end{equation}
where $S_A = \diag(\lambda_1, \lambda_2, \dots, \lambda_d)$ and $U_A$ is $n \times d$ matrix whose columns are $u_1, u_2, \dots, u_d$. 
The rows of $\hat X (A)$ define an empirical distribution $\pr_{\hat X(A)} := \frac1n \sum_{i=1}^n \delta_{\hat X_{i*}(A)}$ where $\delta_x$ is the point mass as $x$. ASE-MDD then measures the distance between two adjacency matrices $A_1$ and $A_2$ as the maximum mean discrepancy of the corresponding empirical distributions:
\[
d_{\text{ASE-MMD}}(A_1, A_2) = \text{MMD}(\pr_{\hat X(A_1)}, \pr_{\hat X(A_2)}).
\]
The MMD relies on a positive definite kernel function $\kappa: \reals^d \times \reals^d \to \reals$. Letting $\hat X = \hat X(A_1)$ and $\hat Y = \hat X(A_2)$, one has
\begin{align*}
&\text{MMD}(\pr_{\hat X}, \pr_{\hat Y})  =
\\
&\qquad \frac{1}{n(n-1)} \sum_{i \neq j} \kappa(\hat{X}_{i*}, \hat{X}_{j*}) - \frac{2}{mn} \sum_{i \neq j} \kappa(\hat{X}_{i*}, \hat{Y}_{j*}) + \frac{1}{n(n-1)} \sum_{i \neq j} \kappa(\hat{Y}_{i *}, \hat{Y}_{j*}).   
\end{align*}
In experiments reported here, we consider a Gaussian kernel and use random Fourier features to approximate the MMD. The bandwidth is set to be $\sigma^2 = 1$. Additional experiments, reported in Appendix~\ref{sec:appendix-bw}, show that the results are not sensitive to the choice of bandwidth.

\paragraph{Adapting to multiple samples.}
Next, we describe how we adapt the test statistics to the cases where the sample size is greater than 1. Any measure of dissimilarity, $d(\cdot,\cdot)$, between two networks, can be generalized to the two-sample case, by averaging the dissimilarity 
of pairs of networks from different samples. More specifically,
given the two samples $A_{rt}, \, t \in [N_r]$ for $r=1,2$, we have the two-sample test statistic
$\frac1{N_1 N_2} \sum_{t=1}^{N_1} \sum_{s=1}^{N_2} d(A_{1t},A_{2s})$.

\paragraph{A note on network generation.} Throughout the experiments, we replace Bernoulli variables with a clipped version. In particular, write $X \sim \bernb(p)$ for $p \in \reals$ to denote $X = 1\{ U < p\}$ for some $U \sim [0,1]$. When $p \in [0,1]$, $\bernb(p) = \bern(p)$, but otherwise $\bernb(p)$ is naturally clipped to either 0 or 1. When we write  $\sbm(B,\pi)$ in experiments, it is based on $\bernb(p)$ generation.

\subsection{Choice of $K$ in practice}
\label{sec:choice:of:K}
For our theoretical developments, we assume $K$ to be known. This is not an issue since there are many consistent estimators of $K$, given that the networks are generated from a $K$-SBM; we refer to~\cite{ZhangAmini2023} and the references therein.

In practice, as we show with numerous experiments  below, the test can be used even if the networks are not generated from a $K$-SBM. There, for any fixed choice of $K$, the test can be applied, measuring how close the $K$-level histograms of the two distributions are. Larger $K$ should provide a better approximation (lower bias), but at the same time the resulting SBMs are harder to estimate and match, leading to more variability. To set an optimal $K$ in practice, we propose a procedure akin to parametric bootstrap: 
\begin{enumerate}
    \item For a large value of $K_0$, fit a $K_0$-SBM to one of the samples, resulting in estimates $(\widehat B, \widehat \pi)$.
    \item Perturb $\Bh$ to get $\Bh_{alt}$ by adding, for example, i.i.d. noise $\sim a \cdot \maxnorm{\Bh}\cdot \unif(0,1)$ to each entry of $\Bh$ where $a \in (0,1)$.
    \item Generate many instances of the testing problem $\sbm(\Bh, \widehat\pi)$ vs. $\sbm(\Bh_{alt}, \widehat\pi)$ and for $K \in [K_{\max}]$ estimate the area under the curve (AUC) of the operator receiver characteristic (ROC) associated with SBM-TS test with parameter $K$.
    \item Select the $K$ that leads to the largest AUC.
\end{enumerate}
In our experiments, the choice $K_0 = K_{\max} = 10$ and $a = 0.1$ has worked well.

\subsection{Simple $2$-SBM} Our first simulation reproduces the experiment from~\cite{tang2017} on a 2-SBM.
In particular, we consider testing
$H_0: A, A' \sim \sbm(B_0, \pi)$
versus
$H_1: A \sim \sbm(B_0, \pi), \; A' \sim \sbm(B_{\eps}, \pi)$,
where
\begin{equation}
    B_{\varepsilon} = \begin{bmatrix}
0.5 + \varepsilon & 0.2 \\
0.2 & 0.5 + \varepsilon 
\end{bmatrix},
\end{equation}
and $\pi = (0.4, 0.6)$. Note that each sample contains a single adjacency matrix. We consider vertex sizes $n \in \{100, 200, 500, 1000\}$ and noise levels $\varepsilon \in \{0.01, 0.02, 0.05, 0.1\}$ and evaluate the power of the our SBMTS using 1000 Monte-Carlo replications. Table~\ref{tab:2sbm} summarizes the results alongside the power estimates for ASE-MMD reported   in~\cite{tang2017}. The results clearly show the superior performance of SBMTS along both dimensions $(n,\eps)$.

\begin{table}[t]
\centering
\caption{Power estimates for ASE and SBM-TS for a simple 2-block SBM experiment}
\label{tab:2sbm}
\begin{tabular}{c|cc|cc|cc|cc}
\toprule
 & \multicolumn{2}{c|}{\(\eps = 0.01\)} & \multicolumn{2}{c|}{\(\eps = 0.02\)} & \multicolumn{2}{c|}{\(\eps = 0.05\)} & \multicolumn{2}{c}{\(\eps = 0.1\)} \\
\( n \) & SBM-TS & ASE & SBM-TS & ASE & SBM-TS & ASE & SBM-TS & ASE \\
\midrule
100 & 0.047 & - & 0.161 & 0.06 & 0.776 & 0.09 & 1 & 0.27 \\
200 & 0.15 & - & 0.599 & 0.09 & 1 & 0.17 & 1 & 0.83 \\
500 & 0.785 & - & 1 & 0.01 & 1 & 0.43 & 1 & 1 \\
1000 & 1 & 0.14 & 1 & 1 & 1 & 1 & 1 & 1 \\
\bottomrule
\end{tabular}
\end{table}

\subsection{General SBM with random $B$}
\label{sec:exp-random-B}

To investigate the ``typical'' performance of the test under a general SBM, we generate instances of the testing problem~\eqref{eqn:test2} for a random collection of $(B_1,B_2)$. In particular, we generate 50 random connectivity matrices $B^{(i)}, i \in [50]$, each with entries drawn from $\unif(0.2,0.7)$ subject to symmetry. We then generate corresponding perturbed matrices 
$B_\eps^{(i)} = B^{(i)} + M_\eps^{(i)}$
where $M_\eps^{(i)}$ has entries $N(0, \eps^2)$ subject to symmetry.
For each $i$, we consider the testing problem~\eqref{eqn:test2} for $B_1 = B_2 = \rho B^{(i)}$ versus $(B_1,B_2) = (\rho B^{(i)}, \rho B^{(i)}_\eps)$ where $\rho$ is a sparsity parameter, and we consider a uniform class prior, $\pi = (1/K,\dots,1/K)$.

We set the sample sizes to $N_r = 100$, the number of vertices to $n_{rt} = 10000$, the noise level to $\varepsilon = 0.05$, and sparsity factor to $\rho = 0.1$. For each method, we estimate a mean receiver operating characteristic (ROC) curve across the 50 experiments. More precisely, we replicate each experiment 100 times and obtain 50 ROC curves; then for each false positive rate (FPR), we report the average true positive rate (TPR) across the 50 ROCs. The resulting  mean ROC curves are shown in~Figure \ref{fig:random_B} in Appendix~\ref{app:mean:ROCs}. A summary of these curves via their area under the curve (AUC) is provided in Table~\ref{tab:general_sbm}.

From Table~\ref{tab:general_sbm}, 
we can see that SBM-TS exhibits superior performance over the competitors, but its performance somewhat deteriorates as $K$ increases. 
The primary reason is that significantly increasing $K$  while keeping the number of vertices constant makes the connectivity matrix estimates, $\Bh_r$, noisier; thus, it becomes more challenging to satisfy the approximate matching condition~\eqref{eq:eps:delta:assump}.
Nonetheless, the median area under the ROC curve (AUC) for $K \in \{15, 20\}$ is $0.89$ and $0.87$ respectively, which shows the strong performance of our proposed test in the cases where \eqref{eq:eps:delta:assump} holds. We also note that ASE and NCLM are essentially powerless (AUC $\approx 0.5$) for large $K \in \{15,20\}$.

\begin{table}[t]
\centering
\caption{Mean AUC-ROC for various methods for the general SBM experiment}
\label{tab:general_sbm}
\begin{tabular}{@{}ccccc@{}}
\toprule
\( K \) & 2 & 3 & 15 & 20 \\
\midrule
SBM-TS & 0.95 & 0.91 & 0.83 & 0.81 \\
ASE & 0.62 & 0.54 & 0.49 & 0.48 \\
NCLM & 0.70 & 0.64 & 0.50 & 0.50 \\
\bottomrule
\end{tabular}
\end{table}

\subsection{RDPG}
\label{sec:rdpg}
To demonstrate the performance of our test outside the SBM family, we first consider random dot product graphs (RDPG), 
a class of latent position network models where the probability of an edge formation between two vertices is determined by the dot product of their associated latent positions~\cite{young2007random,JMLR:v18:17-448}. 
Consider latent positions  $\{X_i\}_{i=1}^n$ drawn i.i.d. from some distribution $\mathbb F$, and 
given $\{X_i\}$, generate an adjacency matrix $A$ as
\begin{align*}
      A_{ji} = A_{ij}  \; \sim \overline{\bern} (\rho X_i^\top X_j), \quad \text{for}\; i < j.
\end{align*}
We denote such model as $A \sim \rdpg_n (\mathbb F, \rho)$. We consider two experiments around the two sample testing problem
\[
H_0: A, A' \sim \rdpg_n (\mathbb F, \rho), \quad \text{against}\quad 
H_1: A \sim \rdpg_n (\mathbb F, \rho),\; A' \sim\rdpg_n (\mathbb F', \rho)
\]
We 
set the number of vertices to $n = 10000$ and the sparsity parameter $\rho = 0.15$.

For Experiment~1, we take $\mathbb F = \mathbb F_1 := N(0,\Sigma)$ and $\mathbb F' = N(0,I_2)$ where
\begin{equation*}
    \Sigma = \begin{bmatrix}
        3 & 2\\
        2 & 3
    \end{bmatrix}.
\end{equation*}
For Experiment 2, $\mathbb F =  \mathbb F_2 := \frac12 N(0, \Sigma) + \frac12 N(0, O \Sigma O^\top)$ while $\mathbb F' = N(0,I_2)$ as in Experiment~1. Here, $\mathbb F_2$ is  a mixture of two Gaussian distributions and $O$ is an orthogonal matrix that corresponds to a $\pi/2$ rotation counter-clockwise, namely,
$   
O = \bigl( \begin{smallmatrix}
    0 & 1 \\
    -1 & 0
\end{smallmatrix} \bigr).
$ 

Due to the invariance of the RDPG model to an orthogonal transformation of the underlying node positions, we in fact have $\rdpg_n(\mathbb F_1, \rho) = \rdpg_n(\mathbb F_2, \rho)$, which implies that the two experiments are equivalent.

\begin{figure}[t]
     \centering
     \includegraphics[width=.49\textwidth]{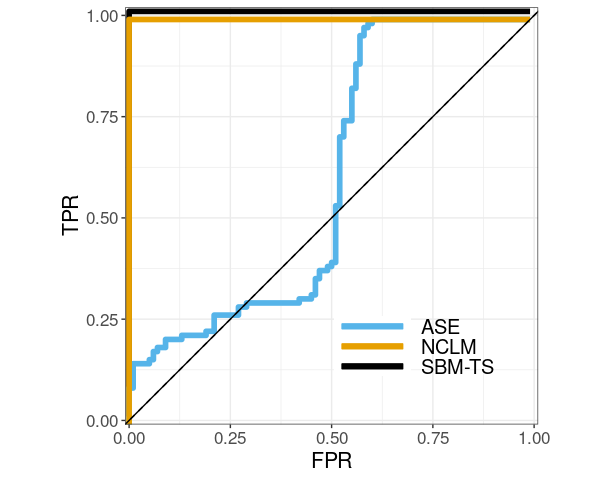}
     \includegraphics[width=.49\textwidth]{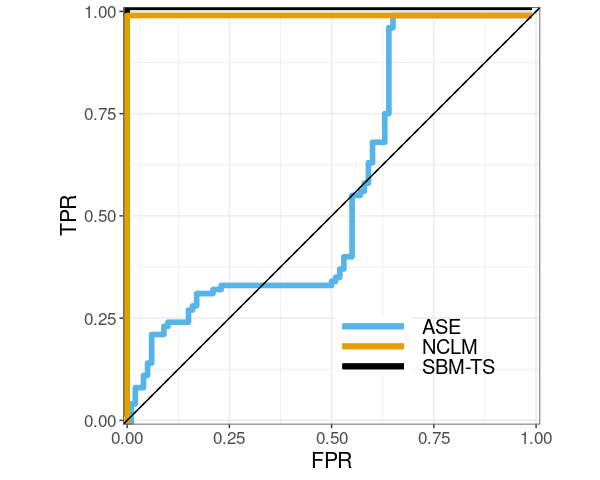}
    \caption{ROC curves for the RDPG Experiment 1 (left) and Experiment 2 (right).}
        \label{fig:rdpg_roc}
\end{figure}

Figure~\ref{fig:rdpg_roc} summarises the results, comparing the performance of SBM-TS to ASE-MMD. We see qualitatively similar behavior for the two experiments, confirming the equivalence of the two models. While SBM-TS essentially exhibits the ROC of the perfect test, ASE-MMD shows poor performance. The reason is that, as proposed in~\cite{tang2017}, ASE-MMD does not have a
matching mechanism to account for the potential orthogonal transformation mismatch between the estimated latent positions for $A$ and $A'$ under null. As a result, even in Experiment~1, the estimated distributions of the latent positions for $A$ and $A'$ could be rotated versions of each other, hence give a large value of MMD. This leads to a bimodal distribution under null for ASE-MMD with one mode essentially overlapping or exceeding the distribution under the alternative, leading to the phase transition behavior observed in the ROC. In contrast, SBM-TS accounts for this potential rotation mismatch by properly matching the connectivity matrices. That is, although we use the same spectral clustering algorithm to estimate the labels for both methods, only ASE-MMD suffers from the orthgonal transformation mismatch. We note that this defect is present in the original paper~\cite{tang2017} as can be seen from the presence of the orthogonal matrix $\bm W_{n,m}$ in the asymptotic null limit of the statistic in Theorem 5 of \cite{tang2017}.

\subsection{A Graphon}
We now consider a more general graphon model. A graphon is a symmetric bivariate function $W:[0,1]^2 \to [0,1]$. We say that $A \sim \graphon(W)$, if for each vertex $i \in [n]$, there is a latent variable $v_i \sim \unif(0,1)$ and given $(v_i)$, 
\begin{align*}
     A_{ij} \sim \bern (W(v_i, v_j)) \, , 1 \leq i < j.
\end{align*}
Consider the two-sample testing problem
\begin{align*}
    H_0: A, A' \sim \graphon(\rho W), \quad \text{against}\quad 
H_1: A \sim \graphon(\rho W),\; A' \sim \graphon(\rho W_{\eps,\delta})
\end{align*}
where $\rho$ is a sparsity factor. We take $W(v_1, v_2) = \frac{1}{4} ( v_1^2 + v_2^2 + \sqrt{v_1} + \sqrt{v_2})$, the graphon from \cite{airoldi2013stochastic},
and let $W_{\eps, \delta}$ be the following perturbation of $W$,
\begin{equation*}
    W_{\varepsilon, \delta}(v_i, v_j) = W(v_i, v_j) +  \varepsilon \cdot 1\bigl\{v_i, v_j \in [0.5 - \delta; 0.5 + \delta] \bigr\}.
\end{equation*}
 In this experiment, we let $\eps = 0.05$, $\delta = 0.2$. For a general graphon, spectral clustering does not provide a good SBM fit. To fit a proper $K$-SBM, we use a Bayesian community detection algorithm~\cite{shen2022bayesian}. The algorithm of Section~\ref{sec:choice:of:K} suggests $K=2$ as the optimal choice and we set $d = K$ for ASE-MMD. Figure~\ref{fig:graphon_roc} illustrate the resulting ROCs for two cases of $(n, \rho)$, namely, $(1000, 0.5)$ and $(10^4, 0.05)$, showing a clear advantage for SBM-TS against the ASE-MMD.

\begin{figure}[t]
     \centering
     \begin{subfigure}{0.45\textwidth}
         \centering
         \includegraphics[width=\textwidth]{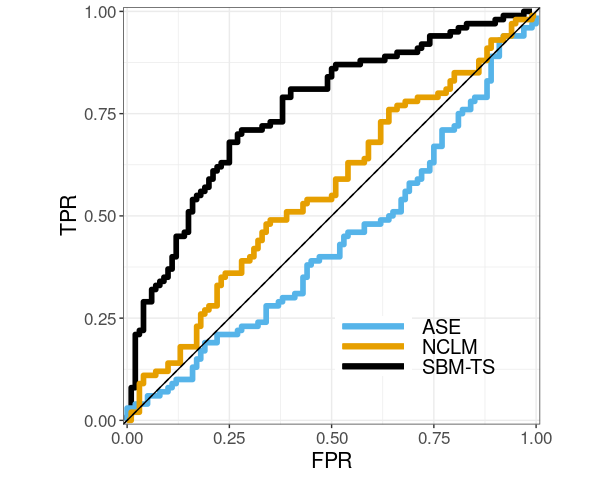}
         \caption{$n = 1000$, $\rho = 0.5$.}
     \end{subfigure}
     \hfill
     \begin{subfigure}{0.45\textwidth}
         \centering
         \includegraphics[width=\textwidth]{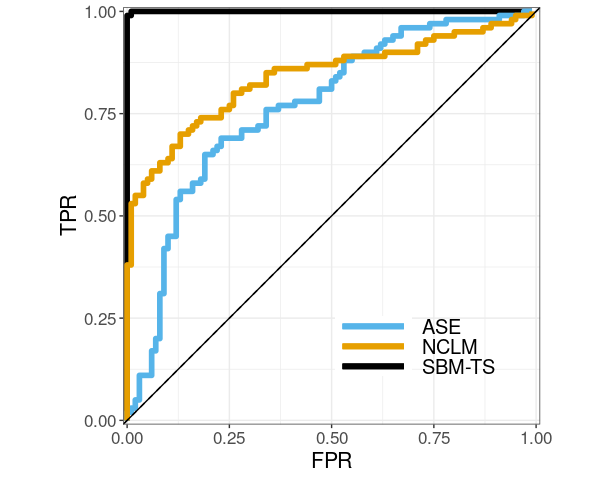}
         \caption{$n = 10000$, $\rho = 0.05$.}
     \end{subfigure}
    \caption{ROC curves for the graphon example.}
        \label{fig:graphon_roc}
\end{figure}

\subsection{COLLAB dataset}
\begin{figure}[t]
     \centering
     \begin{subfigure}{0.24\textwidth}
         \centering
         \includegraphics[width=\textwidth]{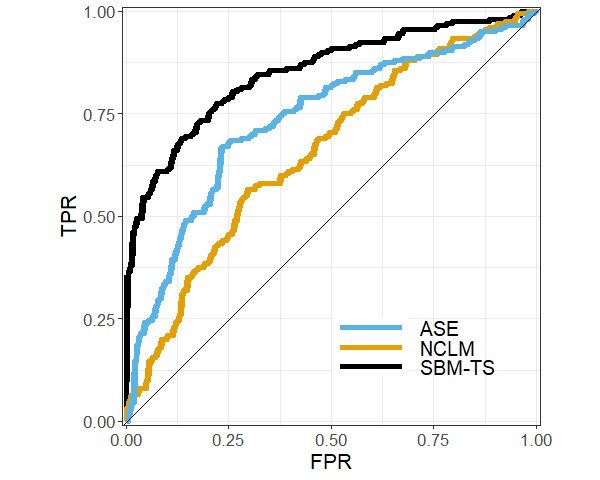}
         \caption{$m = 5$, C1 vs. C3.}
     \end{subfigure}
     \begin{subfigure}{0.24\textwidth}
         \centering
         \includegraphics[width=\textwidth]{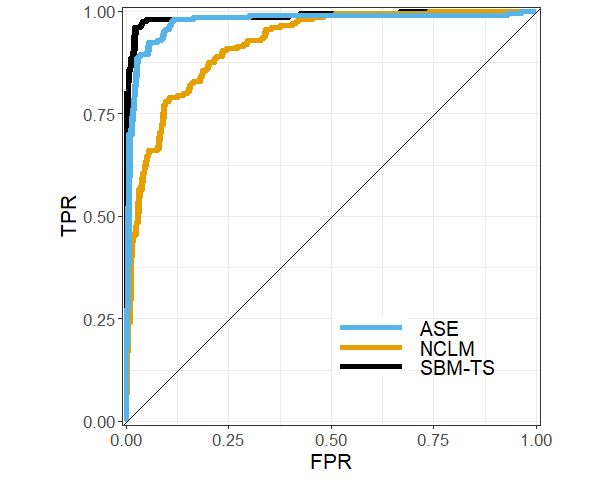}
         \caption{$m = 5$, C2 vs C3.}
     \end{subfigure}
     \centering
     \begin{subfigure}{0.24\textwidth}
         \centering
         \includegraphics[width=\textwidth]{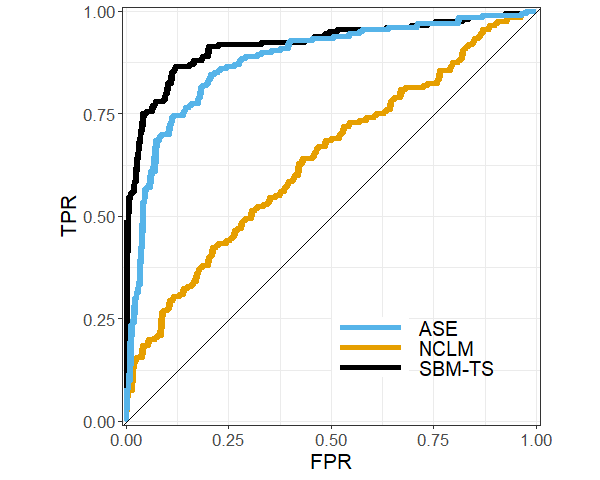}
         \caption{$m = 10$, C1 vs C3.}
     \end{subfigure}
     \begin{subfigure}{0.24\textwidth}
         \centering
         \includegraphics[width=\textwidth]{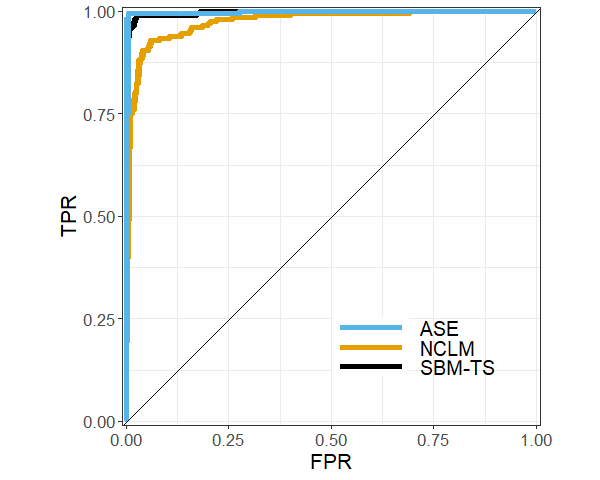}
         \caption{$m = 10$, C2 vs C3.}
     \end{subfigure}
     \caption{ROC curves for the COLLAB dataset.}
     \label{fig:collab_dataset_ROC}
\end{figure}
The COLLAB dataset is a scientific collaboration dataset first introduced in \cite{yanardag}. This dataset contains 5000 networks
derived from
public collaboration data in three scientific fields:
\begin{enumerate*}[$\mathcal C_\arabic*$)]
    \item High energy Physics, \item Condensed Matter Physics, and
    \item Astrophysics.
\end{enumerate*}
Each graph corresponds to an ego-network of a researcher and is labeled by their primary field of research.
On average the graphs in the dataset have around 75 vertices and 2000 edges. Figure \ref{fig:collab:graphs} in Appendix~\ref{app:sample:graphs} demonstrates randomly sampled graphs from this dataset from the three different classes.

Our goal is to evaluate the ability of the test to distinguish between graphs from different classes ($\class_1, \class_2$ and $\class_3$).
We consider two-sample testing problems of the form~\eqref{eqn:test} with $N_1 = N_2 = m$; the two samples under null will be drawn at random (without replacement) both from class $\class_i$, and under the alternative from classes $\class_i$ and $\class_j$ for $i \neq j$. Two choices of $m \in \{5,10\}$ and several choices of $(i,j)$ are considered, as detained in Figure~\ref{fig:collab_dataset_ROC} where the resulting ROCs are plotted.  One observes that SBM-TS has superior or comparable performance to the competing methods in distinguishing the two classes in each case.

\subsection{SW--GOT dataset}
The Star Wars (SW)--Game of Thrones (GOT) dataset \cite{swgot} is derived from popular films and television series. We consider 13 networks: six from the original and sequel SW trilogies, and seven from each of the GOT series. In each graph, vertices
correspond to 
characters and
edges indicate whether two characters share a scene. Let us denote the SW and GOT networks as class $\class_1$ and $\class_2$, respectively. The set of characters for both classes overlap across multiple networks, but no vertex correspondence is utilized because network vertices are unlabeled and each network has different number of vertices.
Sample graphs from this dataset are shown in Figure~\ref{fig:swgot_graphs} in Appendix~\ref{app:sample:graphs}. 

Similar to the COLLAB dataset, we consider a two sample testing problem for distinguishing a null of $(\class_1,\class_1)$ vs. an alternative of $(\class_1, \class_2)$. The resulting ROCs are shown in Figure~\ref{fig:swgot_roc} showing a significant advantage for SBM-TS compared to the competitors.

\begin{figure}[t]
    \centering
    \includegraphics[width = 0.45 \textwidth]{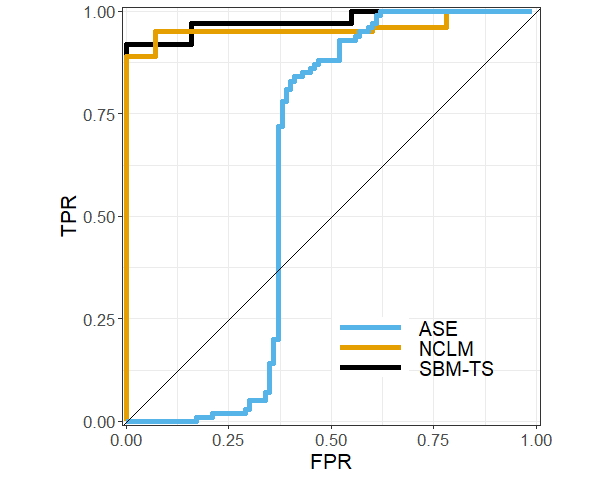}
    \caption{ROC curves for the SW-GOT dataset.}
    \label{fig:swgot_roc}
\end{figure}

\section{Discussion}
\label{sec:discuss}

We presented a flexible test statistic (SBM-TS) for the inference problem of two sample testing for the graphs generated by the stochastic block model. We addressed the main challenge of a true graph level test, where there is no node correspondence, by developing a proper matching of connectivity matrices.
We derived a limiting null distribution result and provided asymptotic power guarantees. Our experiments on a wide range of random graph models, including SBMs, random dot product graphs, and graphons as well as real data examples show that SBM-TS is a computationally efficient and practically viable inference procedure. 

The algorithm for obtaining the two-sample test statistic is modular in a sense that it consists of two independent steps: recovering and matching the connectivity matrices and then forming the test statistic itself based on the discrepancy between the matched matrices. This suggests that there is room for potential development of the algorithm if one is able to improve upon either the matching process or the test statistic construction. For instance, our matching method revolves around the following optimization problem suggested by Lemma~\ref{lemma:q2 = pq1s}:
\begin{equation}\label{eq:opt_problem}
    \argmin_{P  \in \Pi_K, S \in \Psi_K} \fnorm{Q_2 - P Q_1 S}^2 = \argmax_{P \in \Pi_K, S \in \Psi_K  } \tr(Q_2^\top P Q_1 S).
\end{equation}
A potential way to solve this is using alternating minimization algorithm over the permutations matrices $P$ and the sign matrices $S$ since the objective function is separately convex in both arguments. Alternatively, one could employ a low-rank matching method, where the permutation is recovered by matching leading eigenvectors instead of full matrices. More specifically, let $k$-th column of $Q_r$ be denoted as $Q_{rk}$. We note that~\eqref{eq:Q1:Q2} is equivalent to
\begin{align}
    Q_{2k} = P^* Q_{1k} S^*_{kk}, \quad k \in [K].
\end{align}
Since $S^*_{kk} \in \{\pm1\}$, this suggests picking some $k$ and solving
\begin{align}\label{eq:lap:single:sign}
    \argmin_{P \in \Pi_K,\; s \in \{\pm1\}} \norm{P Q_{1k} s - Q_{2k}}^2 =
    \argmax_{P \in \Pi_K,\; s \in \{\pm1\}} \tr(P s Q_{1k} Q_{2k}^T).
\end{align}
That is, we solve the two problems $\lap(Q_{1k} Q_{2k}^T)$ and $\lap(-Q_{1k} Q_{2k}^T)$ and pick a permutation that minimizes the cost in~\eqref{eq:lap:single:sign}. The perturbation analysis suggests that as long as the entries of $Q_{1k}$ are separated well-enough, the permutation matrix $P^*$ has consistent recovery. Finally, we note that for the easier two-sample problem where there is node correspondence among all the networks, it is possible to propose matching algorithms that avoid $(\theta,\eta)$-friendly assumption. 

The choice of the stochastic block model is partially motivated by its ability to approximate more general generative models such as graphons \cite{airoldi2013stochastic, olhede2014network}. In our experiments, we have shown that the SBM-TS is a practical option for two-sample testing of the graphons. Therefore, one of the future directions would be developing theory for the limiting null distribution of the test under graphons that are well-approximated by the stochastic block models. 

\section{Proofs of the main results}

\subsection{Proof of Theorem~\ref{thm:null:main}}
Since $K$ is fixed, we can take $\theta = \Omega(1)$ small enough so that $\theta < \frac1{4 \sqrt K}$. Take $n$ large enough so that~\eqref{assu:simple:scaling} holds. Let $\Dc$ be the event on which~\eqref{assu:gamma} holds.  Since by assumption, $\gamma_n = o_p(1)$ and 
\[
\frac{n}{\nu_n} \cdot \frac{\theta \eta}{2 \sqrt 2 K} \gtrsim 1
\]
we have $\pr(\Dc) = 1 - o(1)$.
Let $B_r^*$ be some (a priori) fixed version of the connectivity matrix for each of the two groups $r=1,2$. By Proposition~\ref{prop:conn:consist} and the union bound, 
there is an event $\Ac$ with
\[
\pr (\Ac^c) \le 3 N_+ K^2 n^{-\alpha} + K N_+ e^{-\kappa^2 n \pmin/3}
\]
and
permutation matrices $\Pt_{rt} \in \Pi_K$ such that on $\Ac \cap \Dc$:
\begin{align*}
    \maxnorm{\Pt_{rt}\Bh_{rt} \Pt_{rt}^{\top} - B^*_r}  &\,\leq\, 
  C_1  \frac{\nu_n}n \Bigl( 56 \beta^2 \cdot\epst_{rt} + \delta_n^{(1)} \Bigr)  \;\le\;  \frac{\nu_n}n \gamma_n \;\le\; \frac{\theta \eta}{2 \sqrt{2} K}
\end{align*}
for all $t \in [N_r]$ and $r = 1,2$,
where $\epst_{rt} = \mis(z_{rt}, \zh_{rt})$.
This implies that
\begin{align*}
    \Ac \cap \Dc \;\subset\;
    \Ec_2 := \Bigl\{ 
    \apermto{\Bh_{rt}, \Pt_{rt}, B_r^*}
    \quad \text{for all}\; t=1,\dots,N_r, \; r=1,2\Bigr\}.
\end{align*}
According to Lemma~\ref{lem:randomization}, we can assume that, in the above, $\Pt_{rt}$ is independent of everything else.

\begin{figure}[t]
	\centering
	\begin{subfigure}[b]{0.3\textwidth}
		\centering
		\begin{tikzcd}[column sep=large, row sep=large,transform canvas={scale=1.25}]
			\Bh_{r1} \arrow[dashed]{r}{\Pt_{r1}} 
				& B_r^* \arrow{d}{I_K} \\
			\Bh_{rt} 
			\arrow[dashed]{r}[swap]{\Pt_{rt}} 
			\arrow[bend left=20]{u}[swap]{\Ph_{rt}}
			& B_r^*
		\end{tikzcd}
            \vspace{1.75cm}
		\caption{Step~\ref{step:match:to:first}}
		\label{fig:match:to:first:a}
	\end{subfigure}
	\begin{subfigure}[b]{0.33\textwidth}
		\centering
		\begin{tikzcd}[column sep=large, row sep=large,transform canvas={scale=1.25}]
			B_r & \\
			\Bh_{r1} \arrow[dashed]{r}{\Pt_{r1}} 
			\arrow[dashed]{u}{I_K}
			& B_r^* \arrow{d}{I_K}
			\arrow{ul}[swap]{\Pt_{r1}^\top}
			\\
			\Bh_{rt} 
			\arrow[dashed]{r}[swap]{\Pt_{rt}} 
			\arrow[bend left=20]{u}[swap]{\Ph_{rt}}
			& B_r^*
		\end{tikzcd}
            \vspace{2.75cm}
		\caption{Simplification by introducing $B_r$.}
		\label{fig:match:to:first:b}
	\end{subfigure}
        \begin{subfigure}[b]{0.3\textwidth}
		\centering
		\begin{tikzcd}[column sep=large, row sep=large,transform canvas={scale=1.25}]
			\Bh_{r1} \arrow[dashed]{r}{I_K} 
				& B_r \arrow{d}{I_K} \\
			\Bh_{rt} 
			\arrow[dashed]{r}[swap]{\Pt_{rt}} 
			\arrow[bend left=20]{u}[swap]{\Ph_{rt}}
			& B_r
		\end{tikzcd}
            \vspace{1.75cm}
		\caption{After conditioning on $B_r$}
		\label{fig:match:to:first:c}
	\end{subfigure}
	\caption{Matching to first network in group \( r \). Here, \( t > 1 \) in the bottom level. Bent arrow is an application of the matching algorithm \( \match \). The edges can be reversed in which case the permutation matrix should be replaced with its transpose. Left-side quantities are random, while right-side quantities are deterministic. Dashed and solid arrows correspond to approximate and exact matching, respectively. See the discussion at the end of Section~\ref{sec:match:consist} for more details on the nature of these diagrams.}
	\label{fig:match:to:first}
\end{figure}
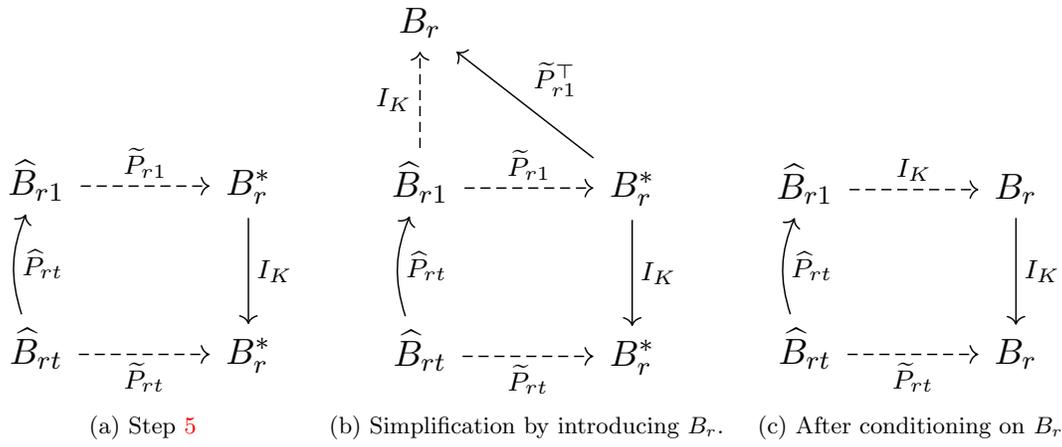

Figure~\ref{fig:match:to:first:a} illustrates the ``matching-to-first'' in step~\ref{step:match:to:first} of the algorithm. As this diagram shows, by Theorem~\ref{prop:matching:recovery}, on event $\Ec_2$, we have $\Ph_{rt}^{\top} = \Pt_{rt}^\top I_K \Pt_{r1}$ that is, 
\begin{align*}
\Ec_2 \subset \Ec_1 := 
    \bigl\{
        \Ph_{rt} =  \Pt_{r1}^\top \Pt_{rt}
    \bigr\}.
\end{align*}
To simplify, let us define $B_r := \Pt_{r1}^\top B_r^* \Pt_{r1}$, that is,
\begin{align*}
    \permto{B_r^*,\Pt_{r1}^\top,B_r}.
\end{align*}
Note that $B_r$ is random version of the true $B_r^*$, due to the randomness of $\Pt_{r1}$, although, it is independent of everything else. Then, a little algebra shows that on $\Ec_1 \cap \Ec_2$, 
\[
\apermto{\Bh_{rt}, \Ph_{rt}, B_r}
\]
for all $t \in [N_r]$ and $r = 1,2$. Figure~\ref{fig:match:to:first:b} illustrates the above inequality (the red path). Now, since $B_r, r = 1,2$ are independent of everything else, we can condition on them and continue with the argument as if they were deterministic. The resulting diagram is shown in Figure~\ref{fig:match:to:first:c}; the effect is as if we assumed $\Pt_{r1} = I_K$ and $B_r = B_r^*$. The above conditioning argument shows that we can do this without loss of generality.

From now on, we work on $(\Ac \cap \Dc) \cap \Ec_1 \cap \Ec_2  = \Ac \cap \Dc$ (by the above argument), on which we have $\Ph_{rt} = \Pt_{rt}$ as discussed above. Then, from the definition of $\Bh_r$ in 
 step~\ref{step:Bh:r}, we note
\[
\Bh_r - B_r = 
 \frac1{N_r} \sum_{t=1}^{N_r} (\Pt_{rt} \Bh_{rt} \Pt_{rt}^{T} - B_r).
\]
On $\Ec_2$ the $\fnorm{\cdot}$ of each term on the RHS is bounded by $\theta \eta / (2\sqrt 2 K)$, and since a norm is a convex function, the same holds for the LHS. That is,
\begin{align}\label{eq:temp:44}
\apermto{\Bh_r,I_K,B_r}
\end{align}
for $r = 1,2$. Since we are under the null, there is a permutation $P^*$ such that $\permto{B_2,P^*,B_1}$. Combining with~\eqref{eq:temp:44}, we can apply Theorem~\ref{prop:matching:recovery}---with $P_1 = P_2 = I_K$ and the roles of $B_1$ and $B_2$ switched---to conclude that $\Ph = P^*$ in step~\ref{step:global:matching}.

\begin{rem}
We could have assumed $B^*_1 = B^*_2$ in the above argument, since we are under the null. However, when passing to $B_r, r=1,2$ we could lose the equality among $B_1$ and $B_2$ (due to $\Pt_{r1}, r=1,2$ potentially being different). Hence, we do not gain anything by making the assumption $B^*_1 = B^*_2$.
\end{rem}

\begin{rem}
    Everything up to and including~\eqref{eq:temp:44} holds under the alternative $B_1 \neq B_2$ as well. This will be used in the proof of Theorem~\ref{thm:test:consist}.
\end{rem}

\medskip
The following arguments are all on $\Ac$.  Let $S_{rt}= \sums(A_{rt},z_{rt})$, $m_{rt} = \counts(A_{rt}, z_{rt})$ and
\[
\Bt_r := \frac{S_r}{m_r}, \quad S_r := \textstyle \sum_t S_{rt}, \quad m_r := \textstyle\sum_t m_{rt} 
\]
Since $\Ph_{rt} = \Pt_{rt}$, we have
\begin{align*}
    \frac{\Sh_r}{\mh_r} = \frac{\sum_t \Pt_{rt} \Sh_{rt} \Pt_{rt}^{\top}}{\sum_t \Pt_{rt} \mh_{rt} \Pt_{rt}^{\top}}
\end{align*}
where $\Sh_r$ and $\mh_r$ are as defined in step~\ref{step:Shr:mhr}.
Let $N = \max\{N_1,N_2\}$.
Then, from Proposition~\ref{prop:conn:consist:multi} and union bound on $r=1,2$, 
there is an event $\Bc_1$ with 
\begin{align}
\label{eq:B1:comp:prob}
    \pr(\Bc_1^c) \le  2(C_2 N + 2K^2) n^{-\alpha} + 2 N K e^{-\kappa^2 n \pmin/3}
\end{align}
such that on $\Bc_1$, we have
\begin{align}\label{eq:upper:main}
\maxnorm{(\Sh_r/ \mh_r) - \Bt_r} \;\le\;   
         40 \,C_1 \beta^3 \frac{\nu_n}n \epst 
\end{align}
where $\epst = \max_{r,t} \epst_{rt}$. Also note that since $\Ph = P^*$, we have $\Sh'_2 = P^* \Sh_2 P^{*T}$ and $\mh'_2 = P^* \mh_2 P^{*T}$.  Moreover, from Proposition~\ref{prop:conn:consist:multi}, on $\Bc_1$,
\begin{align}\label{eq:Btr:Br}
\maxnorm{\Bt_r - B_r} \le C_1 \frac{\nu_n}{n} \delta_n^{(N_r)} \le C_1 \frac{\nu_n}{n}    
\end{align}
since $\delta_n^{(N_r)}  \le \delta_n^{(1)} \le 1$. Using $\maxnorm{B_r} \le C_1 \nu_n /n$ and triangle inequality, we have
\begin{align}\label{eq:upper:control:Bt:Bhr}
    \maxnorm{\Bt_r} \lesssim \frac{\nu_n}{n}, \quad 
    \maxnorm{\Sh_r / \mh_r} \lesssim \frac{\nu_n}{n}
\end{align}
where we have treated $C_1, C_2$ and $\beta^3$ as constants and absorbed them into $\lesssim$ symbol; this will do from time to time in the rest of the proof.

\medskip
Next, we have
$\maxnorm{m_{rt} -\Pt_{rt} \mh_{rt} \Pt_{rt}^{\top}} 
        \le 6 \beta\, n_{rt}^2 \, \epst_{rt}.$
It follows that 
\begin{align}\label{eq:mr:mhr}
    \maxnorm{m_r - \mh_r} \;\le\;
6 \beta \sum_t n_{rt}^2 \, \epst_{rt} 
\;\le\; 6  C_2^2 \beta^2  N_r n^2 \epst     
\end{align}
where $m_r = \sum_t m_{rt}$. Let $m_2' = P^* m_2 P^{*T}$. Then, the same bound as above holds for $\maxnorm{m_2' - \mh_2'}$.  Let $h$ be the elementwise harmonic mean of $m_1$ and $m_2'$.
 Since $\hh$ is the elementwise harmonic mean of $\mh_1$ and $\mh_2'$, we have
\begin{align}\label{eq:hr:hhr}
\maxnorm{h - \hh} \le 12  C_2\beta^2 N n^2 \epst 
\end{align}
where $N = \max\{N_1, N_2\}$. We also not that since $h$ is elementwise the marmonic mean of $m_r, r = 1,2$, we have 
\begin{align}\label{eq:h:lower:bound}
\min_{k,\ell} h_{k\ell} \ge \min_{k,\ell, r} \;  [m_r]_{k\ell} \ge c N n^2 / (2 \beta^2).
\end{align}
The factor 2 is for handling the case $k = \ell$.

\paragraph{Controlling $\sigh$.} Note that
\[
\Bh = \frac{\Sh_1 + P^* \Sh_2 P^{*}}{\mh_1 + P^* \mh_2 P^{*T}}.
\]
Since $B_1 = P^*  B_2 P^{*T}$, this is essentially an estimator like $\Bh$ in Proposition~\ref{prop:conn:consist:multi} based on an independent sample of size $N_+ := N_1 + N_2$ from $\sbm(B_1, \pi)$. It follows from Proposition~\ref{prop:conn:consist:multi} that
there is an event $\Bc_2$ with 
\begin{align}
    \pr(\Bc_2^c) \le  (C_2 N_+ + 2K^2) n^{-\alpha} + N_+ K e^{-\kappa^2 n \pmin/3}
\end{align}
such that on $\Bc_2$, we have
\[
\maxnorm{\Bh - B_1} \le C_1 \frac{\nu_n}{n} 
\Bigl( 40 \beta^3 \epst + \delta_{n}^{(N_+)} \Bigr) \;\le\;
\frac{\nu_n}n \gamma_n
\]
where the second inequality is by $ \delta_n^{(N_+)} \le \delta_n^{(1)}$.

Let $\sigma^2 = (\sigma^2_{k\ell})$ where $\sigma^2_{k\ell} = B_{1k\ell}(1-B_{1k\ell})$. The function $f(x) = x(1-x)$ has derivative satisfying $|f'(x)| \le 1$ for $x \in [0,1]$, hence $f$ is 1-Lipschitz there implying
\[
\maxnorm{\sigh^2 - \sigma^2} \le \maxnorm{\Bh - B_1}.  \]
Since $\nu_n/n \le B_{1k\ell} \le 0.99$, we have 
$\sigma^2_{k\ell} \ge 0.01 \nu_n / n$. 
Ignoring constants, we have shown
\[
\maxnorm{\Bh - B_1} \le \frac{\nu_n}{n} \gamma_n, \quad \maxnorm{\sigh^2 - \sigma^2}\le \frac{\nu_n}{n} \gamma_n, \quad \min_{k,\ell} \sigh^2_{k\ell} \gtrsim \frac{\nu_n}{n}.
\]

\paragraph{High probability event.}
Let $\Bc = \Bc_1 \cap \Bc_2$.
Then, the event $\Ac \cap \Bc$ has high probability. 
Indeed, we have
    \begin{align*}
        \pr(\Ac^c) &\le  3 N_+ K^2 n^{-\alpha} + K N_+ e^{-\kappa^2 n \pmin/3} \\
        \pr(\Bc^c) &\le 3 (C_2 N_+ + 2K^2) n^{-\alpha} + 3 N_+K e^{-\kappa^2 n \pmin/3}.
    \end{align*}
   Using union bound and further bounding $C_2 N_+ + 2 K^2 \le 4 C_2 N_+ K^2$ and $e^{-\kappa^2 n \pmin / 3} \le n^{-\alpha}$ (by assumption~\eqref{assu:simple:scaling}), we obtain \begin{align}\label{eq:Ac:Bc:prob}
       \pr(\Ac^c \cup \Bc^c) \le 19 N_+ K^2 n^{-\alpha}
   \end{align}
    which goes to zero under the assumption $N_+ K^2 n^{-\alpha} = o(1)$.

\paragraph{Controlling $\Th_n$.}  Define
\begin{align*}
      D &:=  \frac{S_1}{m_1} - \frac{S_2'}{m_2'} , \quad 
      \Dh :=   \frac{\Sh_1}{\mh_1} - \frac{\Sh'_2}{\mh'_2}, \\
      \Eh &:= \frac{    \sqrt{\hh}   }{\sqrt{2}   \sigh}  \Dh, \quad 
      E := \frac{ \sqrt{h}   }{ \sqrt{2} \sigma} D
\end{align*}
where $S'_2 = P^* S_2 P^{*T}$ and $m'_2 = P^* m_2 P^{*T}$. Note also that $S_r / m_r = \Bt_r$.  
Let $\df = K(K+1)/2$. For the rest of the proof, we treat the above matrices as vectors in $\reals^{\df}$ by considering only the elements on and above the diagonal, in a particular order (say rowwise).

We have $\Th_n = \fnorm{\Eh}^2$ on $\Ac \cap \Bc$ and since $\pr(\Ac \cap \Bc) = 1-o(1)$, it is enough to establish $\Eh  \convd   (N(0,1))^{\otimes d}$;
see \cite[Theorem 9.15]{Keener2010Theoretical}.
Clearly,
\begin{equation}
    \label{eqn:e1}
         \Eh      \,=\,  \frac{ \sqrt{ \hh }    }{ \sqrt{h}  } \cdot   \frac{ \sigma } {  \hat{\sigma} }  \cdot  E \,+\,   \frac{ \sqrt{ \hh }    }{ \sqrt{h}  } \cdot   \frac{ \sigma } {  \hat{\sigma} }  \cdot  \frac{ \sqrt{h}  }{\sqrt 2\sigma}(  \Dh  - D ).   
\end{equation}
From~\eqref{eq:upper:main}, with high probability, we have that 
\begin{equation}
   \label{eqn:e2}
     \|   \frac{ \sqrt{h}  }{\sqrt 2 \sigma}(  \Dh  - D )\|_{\max}   \,\lesssim\,  \frac{\sqrt{ N n^2 }  }{( \nu_n/n   )^{1/2}} \frac{\nu_n \epst }{n    }  \,=\, \sqrt{ N n \nu_n  }\epst.
\end{equation}
Next we have
\[
\maxnorm[\Big]{\frac{\sigma^2}{\sigh^2}  - \bm 1_\df} \le \frac{\maxnorm{\sigma^2 - \sigh^2}}{\min_{k,\ell} \sigh_{k,\ell}^2} \lesssim \frac{(\nu_n/n) \gamma_n}{ \nu_n / n} \le \gamma_n
\]
hence
\begin{equation}
    \label{eqn:e3}
   \sigma / \sigh = \bm 1_\df + o_p(1). 
\end{equation}
Similarly,
\begin{align*}
    \maxnorm[\Big]{\frac{\hh}{h}  - \bm 1_\df} \le \frac{\maxnorm{\hh - h}}{\min_{k,\ell} h_{k\ell}} \lesssim \frac{ N n ^2 \epst}{N n^2} \le \epst
\end{align*}
and hence 
\begin{equation}
    \label{eqn:e4}
    \sqrt{\hh} / \sqrt{h} = \bm 1_\df + o_p(1).
\end{equation}

\begin{lem}
\label{lem:normality} 
 $E := ( \sqrt{h} / ( \sqrt{2} \sigma)) D \convd (N(0,1))^{\otimes \df}$, under the assumptions of Theorem~\ref{thm:null:main}. 
\end{lem}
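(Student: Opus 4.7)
The plan is to condition on the label $\sigma$-algebra $\mathcal{F} := \sigma(\{z_{rt} : t \in [N_r], r=1,2\})$, prove a conditional multivariate CLT for $E$, and then remove the conditioning via characteristic functions. Given $\mathcal{F}$, the block-count matrices $m_1$ and $m_2' = P^* m_2 P^{*\top}$ are deterministic, so the harmonic mean $h$ and the scale factor $\sqrt{h/(2\sigma^2)}$ are $\mathcal{F}$-measurable; the remaining randomness in $D$ is carried by the conditionally independent Bernoulli entries $\{(A_{rt})_{ij} : i<j\}$. Distinct upper-triangular block positions are determined by disjoint collections of these Bernoullis, hence $\{D_{k\ell} : k \le \ell\}$ are conditionally independent. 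Under the null, each $[S_r]_{k\ell}/[m_r]_{k\ell}$ is a sample mean of i.i.d.\ $\bern(B_{1,k\ell})$ variables with conditional variance $\sigma_{k\ell}^2/m_{r,k\ell}$ (with the diagonal factor-of-two accounted for by the matching double-counting in $\sums$ and $\counts$), so
\[
    \Var(D_{k\ell} \mid \mathcal{F}) = \sigma_{k\ell}^2\Bigl(\frac{1}{m_{1,k\ell}} + \frac{1}{m'_{2,k\ell}}\Bigr) = \frac{2\sigma_{k\ell}^2}{h_{k\ell}},
\]
which gives $\Var(E_{k\ell} \mid \mathcal{F}) = 1$, as desired.

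Next I would apply the Lindeberg--Feller CLT coordinate-wise: the summands are bounded in $[0,1]$ and the effective sample sizes $m_{r,k\ell}(z)$ diverge on a high-probability event. Indeed, the multinomial concentration already invoked in the proof of Theorem~\ref{thm:null:main} (through the events $\Ac \cap \Bc_1$) yields $m_{r,k\ell} \gtrsim N n^2/\beta^2$, and the standing hypothesis $N\nu_n n \to \infty$ makes the per-summand contribution negligible relative to the total variance, trivially verifying Lindeberg's condition. Combining with the conditional independence across the $\df = K(K+1)/2$ coordinates, this yields the conditional convergence $E \mid \mathcal{F} \convd (N(0,1))^{\otimes \df}$ in probability; equivalently, the conditional characteristic function $\E[\exp(i\langle t, E\rangle)\mid \mathcal{F}]$ tends in probability to $e^{-\|t\|^2/2}$ for every $t \in \reals^\df$.

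Finally I would deconditionalize through characteristic functions: for any $t \in \reals^\df$,
\[
    \E\bigl[e^{i\langle t, E\rangle}\bigr] = \E\bigl[\E[e^{i\langle t, E\rangle}\mid \mathcal{F}]\bigr],
\]
and since the inner quantity is bounded in modulus by $1$ and converges in probability to $e^{-\|t\|^2/2}$ by the previous step, bounded convergence yields $\E[e^{i\langle t, E\rangle}] \to e^{-\|t\|^2/2}$, whence L\'evy's continuity theorem delivers the claimed weak convergence. The main obstacle is ensuring a \emph{uniform} lower bound on the label-dependent block counts $m_{r,k\ell}$ on a high-probability event for the labels, so that the conditional CLT applies off a set of negligible probability; this is supplied by the Chernoff-type multinomial concentration already set up upstream in the proof of Theorem~\ref{thm:null:main}. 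A secondary bookkeeping task is to verify that the factor-of-two asymmetry between diagonal and off-diagonal blocks (inherited from the double-counting conventions in $\sums$ and $\counts$) is absorbed correctly by the shared normalization $\sqrt{h/(2\sigma^2)}$, so that \emph{every} coordinate of $E$ has asymptotic unit variance.
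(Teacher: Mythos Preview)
Your strategy is sound and mirrors the paper's high-level architecture: condition on the label $\sigma$-algebra, exploit the conditional independence of the block positions $(k,\ell)$, establish a coordinate-wise CLT, and then decondition. The differences are in the tools. The paper makes the conditional CLT \emph{quantitative} via Berry--Esseen (Proposition~\ref{prop:clt1} and Lemma~\ref{lem:deterministic:clt}), obtaining an explicit Kolmogorov-distance bound of order $(N n \nu_n)^{-1/2}$ that is uniform over labels in the high-probability event $\Ac_n$; it then uses a bespoke Lemma~\ref{lem:cond:marginal:to:prod} to pass from these marginal bounds plus conditional independence to joint weak convergence. Your route replaces Berry--Esseen by Lindeberg--Feller and replaces Lemma~\ref{lem:cond:marginal:to:prod} by the characteristic-function identity $\ex[e^{i\langle t,E\rangle}] = \ex\bigl[\ex[e^{i\langle t,E\rangle}\mid\mathcal F]\bigr]$ together with bounded convergence. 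The latter substitution is arguably cleaner and more standard than the paper's custom lemma.

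One point that needs sharpening: Lindeberg--Feller is a qualitative limit theorem for a \emph{deterministic} triangular array, and here the array depends on the random labels. Saying ``Lindeberg holds on a high-probability event'' does not by itself yield ``the conditional characteristic function converges in probability.'' You need either a quantitative bound (Lyapunov or Berry--Esseen, which on $\Ac_n$ gives $\lvert \ex[e^{it E_{k\ell}}\mid z] - e^{-t^2/2}\rvert \lesssim |t|^3 (Nn\nu_n)^{-1/2}$ uniformly---this is exactly what the paper extracts) or a subsequence/Borel--Cantelli argument on a product space to upgrade ``Lindeberg holds eventually on $\Ac_n$'' to almost-sure conditional convergence along a further subsequence. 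Either patch is routine, but as written your proposal elides it. The paper's Berry--Esseen route buys precisely this uniformity for free; your route is lighter on machinery but needs this extra sentence of care.
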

\begin{proof}
    See Section \ref{sec:lem_normality}.
\end{proof}

 Therefore, from  (\ref{eqn:e1})--(\ref{eqn:e4}), Lemma \ref{lem:normality}, and Slutsky's theorem, we obtain that  
$
  \Eh  \convd   (N(0,1))^{\otimes d},
$
provided that $\sqrt{ N \nu_n n} \, \epst   \,=\, o(1)$. The proof is complete.

\subsection{Proof of Lemma~\ref{lem:normality} }
\label{sec:lem_normality}
Recall that the $L^1$ Wasserstein distance between the distributions of two random variables $Y$ and $Z$ can be expressed as
\[
d_{W_1}(Y,Z) = \sup_{h:\;  \lipnorm{h} \le 1} | \ex h(Y) - \ex h(Z)|
\]
where $h$ ranges over all $1$-Lipschitz functions $h : \reals \to \reals$, that is, $h$  that satisfy $|h(x) - h(y)| \le |x - y|$ for all $x, y \in \reals$. See~\cite[Chapter 4]{Chen2010Normal} or~\cite{ross2011fundamentals}.

Lemma \ref{lem:normality} follows form the following results:
\begin{prop}
\label{prop:clt1}
    Let $S \sim \bin(n,p)$ with $p \le 1/2$ and let $W  = \frac{\sqrt n}{\sigma} (\frac{S}{n} - p)$ where $\sigma = \sqrt{p q}$ and $q = 1 - p$. Let $C \le 0.4785$ be the constant in the Berry-Esseen bound. Then,
    \begin{align}
        \sup_{x \in \reals}\, |\pr( W \le x) - \Phi(x) | &\le \frac{C}{\sqrt {np/2}}  \\ \sup_{h: \lipnorm{h} \le 1} \,
        \bigl|\ex h(W) - \ex h(Z) \bigr| &\le \frac{1}{\sqrt{np/2}} 
    \end{align}
    where $Z \sim N(0,1)$.
\end{prop}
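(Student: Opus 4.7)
The plan is to express $S = \sum_{i=1}^n X_i$ as a sum of i.i.d.\ $\bern(p)$ variables, so that
$
W = (\sigma \sqrt n)^{-1}\sum_{i=1}^n (X_i - p)
$
is a standardized i.i.d.\ sum, and then reduce both claims to classical Berry--Esseen-type inequalities in the Kolmogorov and Wasserstein-$1$ metrics, respectively.

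The only nontrivial computation is the third absolute central moment,
\[
\rho \;:=\; \E|X_1 - p|^3 \;=\; p\, q^3 + q\, p^3 \;=\; pq\,(p^2+q^2) \;=\; \sigma^2(p^2+q^2).
\]
Since $p + q = 1$ implies $p^2 + q^2 = 1 - 2pq \le 1$, and the hypothesis $p \le 1/2$ forces $q \ge 1/2$, I obtain the ``master inequality''
\[
\frac{p^2 + q^2}{\sqrt{npq}} \;\le\; \frac{1}{\sqrt{npq}} \;\le\; \frac{1}{\sqrt{np/2}},
\]
which will plug into both conclusions.

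For the Kolmogorov bound I would apply the i.i.d.\ Berry--Esseen theorem with Shevtsova's sharp constant $C \le 0.4785$:
\[
\sup_{x \in \reals}|\pr(W \le x) - \Phi(x)| \;\le\; \frac{C \rho}{\sigma^3 \sqrt n} \;=\; \frac{C(p^2+q^2)}{\sqrt{npq}} \;\le\; \frac{C}{\sqrt{np/2}}.
\]
For the Wasserstein-$1$ bound I would invoke the Stein's-method Berry--Esseen inequality (e.g., Ross, \emph{Probab.\ Surv.}\ 2011, Thm.~3.2, or Chen--Goldstein--Shao 2010), which asserts that for an independent zero-mean sum $W = \sum_{i=1}^n \xi_i$ with $\sum_i \Var(\xi_i) = 1$, one has $d_{W_1}(W, Z) \le \sum_{i=1}^n \E|\xi_i|^3$. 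Taking $\xi_i = (X_i - p)/(\sigma \sqrt n)$, the right-hand side equals $\rho/(\sigma^3 \sqrt n) = (p^2+q^2)/\sqrt{npq} \le 1/\sqrt{np/2}$, and this finishes the argument.

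The only real subtlety is in the Wasserstein case, where I must cite the precise version of Stein's method that gives leading constant $1$ for i.i.d.\ sums (rather than some larger absolute constant); the Kolmogorov bound and the elementary moment computation are entirely routine once the master inequality above is in hand.
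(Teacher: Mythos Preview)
Your proposal is correct and essentially identical to the paper's proof: both write $W$ as a standardized i.i.d.\ sum, compute $\E|X_1-p|^3/\sigma^3 = (p^2+q^2)/\sqrt{pq}$, bound this by $1/\sqrt{p/2}$ via $p^2+q^2\le 1$ and $q\ge 1/2$, and then invoke Berry--Esseen for the Kolmogorov distance and the Stein's-method bound from Chen--Goldstein--Shao (Corollary~4.1) / Ross for the Wasserstein-$1$ distance. Your emphasis on needing the constant-$1$ version of the Wasserstein inequality is exactly the point; the paper cites precisely that corollary.
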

\begin{proof}
    We can write $S = \sum_{i=1}^n B_i$ where $B_i$ are i.i.d. Bernoulli($p$) variables.
    Let $X_i = (B_i - p) / \sigma$ be the standardized versions and note that $W = \frac1{\sqrt n} \sum_{i=1}^n X_i$.
    Berry-Esseen bound gives
    \begin{align*}
        \sup_{x \in \reals} |\pr( W \le x) - \Phi(x) | \;\le\; C
        \frac{\ex |X_1|^3}{\sqrt n} 
    \end{align*}
    and Corollary~4.1 of~\cite[page 67]{Chen2010Normal} gives
    $
        d_{W_1}(W,Z) \le \frac{\ex|X_1|^3}{\sqrt n}.
    $
    We have 
    \begin{align*}
        \ex|X_1|^3 = \frac1{\sigma^3} \cdot \ex|B_1 - p|^3 = \frac{p^3 q + q^3 p}{(pq)^{3/2}} = \frac{p^2 + q^2}{\sqrt{pq}} \le \frac{(p+q)^2}{\sqrt{p/2}}
    \end{align*}
    which gives the desired inequalities.
    
\end{proof}

\begin{lem}
\label{lem:deterministic:clt}
Suppose that for $r  \in \{1,2\}$ and $t \in [N_r]$ the symmetric matrices $A_{rt} \in \{0,1\}^{  n_{rt} \times n_{rt} }$ are independent with independent entries on and above diagonal that satisfy
$
 (A_{rt})_{ij} \sim \bern(  B_{( z_{rt})_i,(z_{rt})_{j} } ) 
$
where $z_{rt} \in[K]^{ n_{rt} } $ is deterministic. 
Let 
\[
S_r = \sum_{t=1}^{N_r} \sums(A_{rt}, z_{rt}), \quad 
m_r = \sum_{t=1}^{N_r} \counts(A_{rt}, z_{rt}),
\]
and for all $k,\ell \in [K]$, let $\sigma_{k\ell} = \sqrt{B_{k\ell} (1-B_{k\ell})}$ and  set 
\begin{align}
\label{eq:xi:def}
     \xi_{k\ell }  \,=\,     \frac{\sqrt{\mb_{k\ell}}}{ \sqrt2 \sigma_{k\ell}  } \Bigl( \frac{(S_1)_{k\ell}}{m_{1k\ell}} - \frac{(S_2)_{k \ell}}{m_{2k\ell}} \Bigr)\end{align} 
where $\bar{m}_{k \ell}$  is the harmonic mean of $  (m_1)_{k\ell}$   and  $(m_2)_{k\ell}$.   Assume that $ m_{2k\ell} \le c_1 m_{1 k \ell}$ for some  $c_1 > 0$. 
Then
\begin{align*}
    \sup_{x \in \reals} \bigl| \pr( \xi_{k\ell} \le x) -  \pr (Z \le x) \bigr|  \le C \sqrt{\frac{n}{ \nu_n}} \Bigl( \frac{1}{\sqrt {m_{1k\ell}}} + \frac1{\sqrt{m_{2k \ell}}} \Bigr).
\end{align*}
where $C >0$ is a constant dependent on $c_1$ and $Z \sim N(0,1)$.

\end{lem}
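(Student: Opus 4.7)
The approach is to reduce Lemma~\ref{lem:deterministic:clt} to the classical Berry--Esseen theorem for sums of independent (non-identical) Bernoulli random variables. Since the two samples are independent and the labels $z_{rt}$ are deterministic, the block sums $(S_r)_{k\ell}$ are sums of independent $\bern(p)$ variables with $p := B_{k\ell}$. Adopting the no-double-counting convention for $k=\ell$ (the ``$i<j$'' version mentioned after~\eqref{eq:conn:operator}), so that the variance formulas are uniform across $(k,\ell)$, one can write $(S_r)_{k\ell} = \sum_{i=1}^{m_{rk\ell}} B_{ri}$ where $\{B_{ri}: i \in [m_{rk\ell}],\, r\in\{1,2\}\}$ are jointly independent $\bern(p)$.

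Next, I express $\xi_{k\ell}$ as a sum of $m_{1k\ell}+m_{2k\ell}$ independent mean-zero summands. Setting $c_r := \sqrt{\bar m_{k\ell}/(2\sigma_{k\ell}^2)}/m_{rk\ell}$ and $X_{ri} := (-1)^{r-1}\,c_r(B_{ri}-p)$, we have $\xi_{k\ell} = \sum_{r,i} X_{ri}$ (the constant centerings $c_r m_{rk\ell} p$ cancel between the two samples since $c_1 m_{1k\ell} = c_2 m_{2k\ell}$), and the harmonic-mean identity $(\bar m_{k\ell}/2)(1/m_{1k\ell}+1/m_{2k\ell})=1$ immediately yields $\Var(\xi_{k\ell})=1$. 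The classical Berry--Esseen bound for independent (non-identical) summands then gives
\[
\sup_x\bigl|\mathbb{P}(\xi_{k\ell}\le x)-\Phi(x)\bigr| \;\le\; C_{\mathrm{BE}}\sum_{r,i}\mathbb{E}|X_{ri}|^3.
\]

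It remains to bound the total third moment. Using $\mathbb{E}|B_{ri}-p|^3 = pq(p^2+q^2) \le pq = \sigma_{k\ell}^2$, the right-hand side collapses to
\[
\sigma_{k\ell}^2\sum_r m_{rk\ell}\,c_r^3 \;=\; \frac{(\bar m_{k\ell}/2)^{3/2}}{\sigma_{k\ell}}\sum_r\frac{1}{m_{rk\ell}^2} \;\le\; \frac{1}{\sigma_{k\ell}}\Bigl(\frac{1}{\sqrt{m_{1k\ell}}}+\frac{1}{\sqrt{m_{2k\ell}}}\Bigr),
\]
where the last step uses $\bar m_{k\ell}/2 = m_{1k\ell}m_{2k\ell}/(m_{1k\ell}+m_{2k\ell})\le m_{rk\ell}$ for each $r$. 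Combining the sparsity lower bound $p\ge\nu_n/n$ from~\eqref{eq:sparsity} with $\maxnorm{B}\le 0.99$ (both standing assumptions in Theorem~\ref{thm:null:main}) gives $\sigma_{k\ell}^{-1}\lesssim\sqrt{n/\nu_n}$, producing the claimed rate. The assumption $m_{2k\ell}\le c_1 m_{1k\ell}$ plays only a cosmetic role here, absorbing any asymmetry into the absolute constant $C$. No single step is substantially difficult; the only genuinely delicate point is fixing the diagonal/off-diagonal convention so that $\Var(\xi_{k\ell})=1$ holds uniformly in $(k,\ell)$, without which the limiting distribution would pick up an extra factor of two on the diagonal entries.
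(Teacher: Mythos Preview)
Your proof is correct and takes a genuinely different route from the paper's. The paper treats each sample separately: it applies the single-binomial Berry--Esseen bound (Proposition~\ref{prop:clt1}) to each $X_r := \frac{\sqrt{\bar m_{k\ell}}}{\sqrt 2 \sigma_{k\ell}}\cdot\frac{S_{rk\ell}}{m_{rk\ell}}$, introduces matching Gaussians $Y_r$ with $Y_1-Y_2\sim N(0,1)$, and then controls $|\pr(X_1-X_2\le x)-\pr(Y_1-Y_2\le x)|$ via a conditioning argument that produces one Kolmogorov-distance term and one term of the form $|\ex h(X_2)-\ex h(Y_2)|$ with $h(z)=F_{Y_1}(x+z)$. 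The latter is handled by the $W_1$ bound in Proposition~\ref{prop:clt1}, and the Lipschitz constant $\lipnorm{h}\le\sqrt{1+m_{2k\ell}/m_{1k\ell}}/\sqrt{2\pi}$ is precisely where the hypothesis $m_{2k\ell}\le c_1 m_{1k\ell}$ enters essentially.

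Your approach is more direct: you pool both samples into a single sum of $m_{1k\ell}+m_{2k\ell}$ independent centered Bernoullis and invoke the non-identical Berry--Esseen theorem once. The harmonic-mean identity makes the total variance exactly $1$, and the third-moment bookkeeping goes through cleanly. As you observe, the ratio assumption is then unnecessary---your constant is absolute, not $c_1$-dependent---so your argument is both shorter and slightly stronger. The paper's route, by contrast, buys modularity (it reuses Proposition~\ref{prop:clt1} as a black box for each sample) at the cost of the extra hypothesis. Your remark about fixing the $i<j$ convention on diagonal blocks is also well placed; the paper finesses this via Remark~\ref{remark_kl}.
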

\begin{proof}
Let $\sigma_{kl}= \sqrt{B_{kl}(1-B_{kl}) }$.
First, notice that by Proposition~\ref{prop:clt1}, it holds that 
\[
\underset{x \in \mathbb{R}}{\sup}\,\left\vert \mathbb{P}\left( \frac{  \sqrt{m_{rkl} }   }{  \sigma_{kl}
}\left(\frac{S_{rkl} }{ m_{rkl} } - B_{kl} \right)\leq x \right) - \Phi(x)  \right\vert \,\leq\, \frac{C }{ \sqrt{m_{rkl} \nu_n/n  } }
\]
for some constant $C>0$, and for $r=1,2$. Hence,
\begin{equation}
    \label{eqn:s1}
    \underset{x \in \mathbb{R}}{\sup}\,\left\vert \mathbb{P}\left( 
    \frac{\sqrt{\bar m_{k\ell}}}{\sqrt 2\, \sigma_{kl}} \cdot\frac{ S_{rkl } }{ m_{rkl}  }   \leq x \right) - \Phi\Bigl(
    \sqrt{ \frac{2 m_{rkl}}{\bar m_{k\ell}} }\bigl(x- \frac{\sqrt{\bar m_{k\ell}}B_{kl}}{\sqrt2 \sigma_{kl}}  \bigr)   
    \Bigr) 
    \right\vert \,\leq\, \frac{C }{ \sqrt{m_{rkl} \nu_n/n  } }.
\end{equation}
Define the random variables
\[
X_r \,=\,\frac{\sqrt{\bar m_{k\ell}}}{\sqrt 2 \, \sigma_{kl}} \cdot\frac{ S_{rkl } }{ m_{rkl}  },  \quad r= 1,2,
\]
and consider two independent random variable  $Y_1$ and $Y_2$ with 
\[
Y_r \,\sim N\left(\frac{\sqrt{\bar m_{k\ell}}B_{k\ell}}{\sqrt2 \sigma_{k\ell}}, \,  \frac{\bar m_{k\ell }}{2 m_{rk\ell} } \right).
\]
Notice that
$
  Y_1 - Y_2 \,\sim \, N(0,1).
$
Let $F_Z$ denote the CDF of a random variable $Z$. Then, we can rewrite~\eqref{eqn:s1} as
\begin{align}
    \sup_{x \in \mathbb{R}} 
    | F_{X_1}(x) - F_{Y_1}(x)| 
    \,\leq\, \frac{C }{ \sqrt{m_{rkl} \nu_n/n} }.
\end{align}
For any $x, x_2 \in \reals$, we have
\[
\pr(X_1 - X_2 \le x \given X_2 = x_2) = \pr(X_1 \le x + x_2) = F_{X_1}(x + x_2),
\]
by independence of $X_1$ and $X_2$. Fix $x \in \reals$.
Since $\pr(X_1 - X_2 \le x) = \ex[ \pr(X_1 - X_2 \le x \given X_2)]$,
it follows that
\begin{align*}
    & |\mathbb{P}( X_1 - X_2 \leq x) - \mathbb{P}(Y_1 -Y_2 \leq x) | \\
     &\qquad \quad = 
    \bigl| \ex [F_{X_1}(x + X_2)] - \ex[F_{Y_1}(x + Y_2)] \bigr| \\
    &\qquad \quad =
    \bigl| \ex [F_{X_1}(x + X_2)] - \ex[F_{Y_1}(x+X_2) + \ex[F_{Y_1}(x+X_2) - \ex[F_{Y_1}(x + Y_2)] \bigr|  \\
    &\qquad \quad \le 
    \ex \bigl|F_{X_1}(x + X_2) - F_{Y_1}(x + X_2) \bigr| +  \bigl| \ex[F_{Y_1}(x+X_2) - \ex[F_{Y_1}(x + Y_2)] \bigr| \\
    &\qquad \quad\le
    \frac{C }{ \sqrt{m_{1kl} \nu_n/n} } + 
    | \ex h(X_2) - \ex h(Y_2) |
\end{align*}
where we have defined $h(z) = F_{Y_1}(x + z)$. Since 
\begin{align*}
\lipnorm{h} \le \infnorm{h'} &= \sqrt{ \frac{2 m_{2 kl}}{\bar m_{k\ell}}} \cdot \infnorm{\Phi'} \\  &= \sqrt{1 + \frac{m_{2kl}}{m_{1kl}}} \cdot \frac1{\sqrt{2\pi}} \le \sqrt{1 + c_1}\cdot \frac1{\sqrt{2\pi}} =: c_2    
\end{align*}
by assumption. Then, from Proposition~\ref{prop:clt1}, $| \ex h(X_2) - \ex h(Y_2)| \le c_2 / \sqrt{m_{2kl} \nu_n / n}$
\begin{align*}
    |\mathbb{P}( X_1 - X_2 \leq x) - \mathbb{P}(Y_1 -Y_2 \leq x) | \le 
    \frac{C }{ \sqrt{m_{1kl} \nu_n/n} } + 
    \frac{c_2}{\sqrt{m_{2kl} \nu_n / n}}
\end{align*}
which gives the desired result.
\end{proof}

Let $\mathcal{I}(\reals)$ be the set of indicator functions of half-intervals, that is,
\[
\mathcal I(\reals) = \bigl\{t \mapsto 1\{t \le x\} : x \in \reals \bigr\}. 
\]

\begin{lem}
\label{lem:cond:marginal:to:prod}
    Consider random variables $X_{ni}, i \in [K]$ and $Y_n$ and assume that $\{X_{ni}, i \in [K]\}$ are independent conditional on $Y_n$. In addition, we have
\[
\sup_{h \,\in\, \mathcal I(\reals)} | \ex [h(X_{ni}) \given Y_n] - \ex h(Z)| \cdot 1\{Y_n \in \Ac_n\} 
 \;\le\; \eps_n ,\quad i \in [K]
\]
for some sequence of events $\Ac_n$ and a deterministic sequence of $\eps_n > 0$, and some random variable $Z \sim \mu$. Assume that $\eps_n \to 0$ and $\pr(Y_n \in \Ac_n^c) \to 0$ as $n\to \infty$.  Then 
\[(X_{n1},\dots,X_{nK}) \convd \mu^{\otimes K}.\] 
\end{lem}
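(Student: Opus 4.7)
The plan is to establish joint weak convergence by verifying convergence of the joint CDF at every point $x = (x_1, \ldots, x_K) \in \reals^K$, which suffices for convergence to the continuous product measure $\mu^{\otimes K}$ at all its continuity points (in fact, at every point if $\mu$ has a continuous CDF; otherwise we may restrict to the dense set of points where each coordinate is a continuity point of $\mu$). Since the functions $h_i^{x_i}(t) := 1\{t \le x_i\}$ all lie in $\mathcal{I}(\reals)$, the hypothesis gives a uniform bound that we can exploit coordinatewise.

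Fix $x = (x_1, \ldots, x_K) \in \reals^K$ and write $h_i := h_i^{x_i}$. The conditional independence of $\{X_{ni}\}_{i \in [K]}$ given $Y_n$ yields
\begin{equation*}
    \ex\Bigl[\prod_{i=1}^K h_i(X_{ni}) \,\Big|\, Y_n \Bigr] = \prod_{i=1}^K \ex\bigl[h_i(X_{ni}) \given Y_n\bigr].
\end{equation*}
On the event $\{Y_n \in \Ac_n\}$, each factor $\ex[h_i(X_{ni}) \given Y_n]$ lies within $\eps_n$ of $\mu_i := \ex h_i(Z)$, and $|\mu_i|, |\ex[h_i(X_{ni})|Y_n]| \le 1$. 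A standard telescoping estimate therefore gives, on $\{Y_n \in \Ac_n\}$,
\begin{equation*}
    \Bigl| \prod_{i=1}^K \ex[h_i(X_{ni}) \given Y_n] - \prod_{i=1}^K \mu_i \Bigr| \le K \eps_n.
\end{equation*}

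To pass from the conditional statement to an unconditional one, decompose
\begin{equation*}
    \ex \prod_i h_i(X_{ni}) = \ex\Bigl[ \ex[\textstyle\prod_i h_i(X_{ni}) \given Y_n] \, 1\{Y_n \in \Ac_n\} \Bigr] + \ex\Bigl[ \prod_i h_i(X_{ni}) \, 1\{Y_n \in \Ac_n^c\}\Bigr].
\end{equation*}
The second term is bounded in absolute value by $\pr(Y_n \in \Ac_n^c) \to 0$, while the first term differs from $\pr(Y_n \in \Ac_n) \cdot \prod_i \mu_i$ by at most $K \eps_n$ by the display above. Combining the two pieces, and using $\pr(Y_n \in \Ac_n) \to 1$,
\begin{equation*}
    \Bigl| \ex \prod_i h_i(X_{ni}) - \prod_i \mu_i \Bigr| \le K \eps_n + 2\,\pr(Y_n \in \Ac_n^c) \longrightarrow 0.
\end{equation*}
Since the left-hand side is precisely $|F_{(X_{n1}, \ldots, X_{nK})}(x) - F_{\mu^{\otimes K}}(x)|$, the joint CDFs converge pointwise, which yields $(X_{n1}, \ldots, X_{nK}) \convd \mu^{\otimes K}$.

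There is no substantive obstacle here; the only mildly delicate point is the telescoping bound on the product of conditional expectations, which is routine because each $h_i \in \mathcal{I}(\reals)$ is bounded by $1$. The argument uses the Kolmogorov-type approximation in the hypothesis only through evaluation at half-interval indicators, which is exactly what the statement supplies.
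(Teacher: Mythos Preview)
Your proof is correct and follows essentially the same approach as the paper: both show pointwise convergence of the joint CDF by using conditional independence to factor $\ex[\prod_i h_i(X_{ni}) \mid Y_n]$, applying the telescoping product bound $|\prod a_i - \prod b_i| \le K \max_i |a_i - b_i|$ for factors in $[0,1]$, and controlling the contribution from $\{Y_n \in \Ac_n^c\}$ by its probability. The only difference is bookkeeping---the paper bounds $|\ex[W_n \mid Y_n] - C|$ uniformly by $K\eps_n + 2K \cdot 1\{Y_n \in \Ac_n^c\}$ and then takes expectations, whereas you split into $\Ac_n$ and $\Ac_n^c$ first---yielding the slightly sharper constant $K\eps_n + 2\pr(Y_n \in \Ac_n^c)$ versus the paper's $K\eps_n + 2K\pr(Y_n \in \Ac_n^c)$, which is immaterial.
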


\begin{proof}
    Let $Z_i, i \in [K]$ be i.i.d. draws from $\mu$.
    It is enough to show that 
    \begin{align}
        \label{eq:product:BL1}
        \ex \Bigl[\prod_{i=1}^K f_i(X_{ni})\Bigr] \to \ex\Bigl[\prod_{i=1}^K f_i(Z_i)\Bigr] = \prod_i \ex f_i(Z_i)
    \end{align}
    for any collection of $f_1,\dots,f_K \in \mathcal I(\reals)$.
    
    Let us fix one such collection and, for simplicity, write 
    $
    W_n = \prod_i f_i(X_{ni})$
    and $C = \prod_i \ex f_i(Z_i)$.
    We want to show $\ex[W_n| \to C$.
    From the assumption, it follows that
    \begin{align*}
    \bigl| \ex [f_i(X_{ni}) \given Y_n] - \ex f_i(Z_i) \bigr| \;\le\; \eps_n + 2 \cdot 1\{Y_n \in \Ac_n^c\}.
    \end{align*}
    By conditional independence, $\ex [W_n \given Y_n] = \prod_i \ex[f_i(X_{ni})\given Y_n]$. Then, using $|\prod_{i=1}^K a_i - \prod_{i=1}^K b_i| \le K \max_i |a_i - b_i|$, which holds if $a_i, b_i \in [-1,1]$ for all $i$, we have
    \begin{align*}
        \bigl| \ex [W_n \given Y_n ] - C\bigr| \;\le\; K \eps_n  + 2 K \cdot 1\{Y_n \in \Ac_n^c\}.
    \end{align*}
    Then, we have 
    \begin{align*}
        | \ex [W_n] - C|  &= | \ex[\ex[W_n \given Y_n]] - C| \\ &\le \ex \bigl|\ex[W_n \given Y_n]] - C\bigr| \le  K \eps_n  + 2 K \pr(Y_n \in \Ac^c).
    \end{align*}
    Letting $n \to \infty$, the desired result follows from the assumptions.
\end{proof}

\begin{proof}[Proof of Lemma~\ref{lem:normality}]

 Let $z = (z_{rt}, t \in [N_r], r=1,2)$.
 Let $\mathcal M(c_1) = \{z:\; m_{2k\ell} \le c_1 m_{1k\ell}\}$.
 By Lemma~\ref{lem:deterministic:clt}, we have 
\begin{align}
    \label{eq:temp:7}
    \sup_{h \in \mathcal I(\reals)} \bigl|  \ex[ h(\xi_{k\ell}) \given z] -  \ex h(Z) \bigr| \cdot 1\{z \in \mathcal M(c_1)\} \le C(c_1) \sqrt{\frac{n}{ \nu_n}} \Bigl( \frac{1}{\sqrt {m_{1k\ell}}} + \frac1{\sqrt{m_{2k \ell}}} \Bigr)
\end{align}
where $\xi_{k\ell}$ as defined in~\eqref{eq:xi:def}, $Z \sim N(0,1)$ and $C(c_1)$ is some constant dependent on $c_1$.
Let $n_{rtk} := \sum_{i=1}^{n_t} 1\{(z_{rt})_i = k\}$, and consider the event
\[
   \mathcal A_n\,:=\,\Bigl\{  \min_{{r, k}}  \,n_{r tk}\, \geq   n_t / \beta ,\; \forall t \in [N] \Bigr\}.
\]
Then on $\Ac_n$, we have
$m_{rk\ell} \ge c N n^2 /\beta^2$; see also~\eqref{eq:h:lower:bound}. We also have $\pr (\Ac_n^c) = o(1)$ under the assumptions of Theorem~\ref{thm:null:main}, by the same argument that controls $\Ec_1$ in Proposition~\ref{prop:conn:consist:multi}.
Moreover, assuming $N_2 \le N_1$---without loss of generality---on $\Ac_n$, we have
\[
\frac{m_{2k\ell}}{m_{1k\ell}} \le \frac{N_2 (C_2 n)^2}{N_1 n^2 /\beta^2} \le \beta^2 C_2^2
\]
where we have used the size assumption~\eqref{eq:size}. That is, $\Ac_n \subset \{z \in \mathcal M(\beta^2 C_2)\}$.
Taking $c_1 = \beta^2 C_2^2$ in ~\eqref{eq:temp:7} and multiplying both sides of the inequality by $1_{\Ac_n}$, we obtain
\begin{align*}
    \sup_{h \in \mathcal I(\reals)} \bigl|  \ex[ h(\xi_{k\ell}) \given z] -  \ex h(Z) \bigr| \cdot 1_{\Ac_n}  \le \frac{2\beta \,C(\beta^2 C_2)}{\sqrt c} \frac{1}{\sqrt{N n \nu_n}}.
\end{align*}
Since $\{\xi_{k\ell}\}$ are independent given $z$, the result now follows from Lemma~\ref{lem:cond:marginal:to:prod} given $N n \nu_n \to \infty$.
\end{proof}
 
\subsection{Proof of Theorem~\ref{thm:test:consist}}
    We will follow the notation and argument in the proof of Theorem~\ref{thm:null:main}. On event $\Ac \cap \Dc \cap \Bc$ defined there, we have correct matching $\Ph_{rt} = \Pt_{rt}$---where $\Pt_{rt}$ is as defined in the proof of  Theorem~\ref{thm:null:main}---and~\eqref{eq:upper:main} and~\eqref{eq:Btr:Br} hold. Since $\Gc \subset \Dc$, all the above also holds on $\Ac \cap \Gc \cap \Bc$. This is the event we will work on. 
    
    The two inequalities~\eqref{eq:upper:main} and~\eqref{eq:Btr:Br}  give
    \[
    \maxnorm{(\Sh_r / \mh_r) - B_r} \le C_1 \frac{\nu_n}{n} \Bigl( 40 \beta^3 \epst + \delta_n^{(N_r)}   \Bigr) = \xi_r \frac{\nu_n}{n}, \quad r=1,2.
    \]
    Since, $\Sh'_2 / \mh'_2 = \Ph (\Sh_2 / \mh_2) \Ph^T$, we also have 
    \[
     \maxnorm{(\Sh_2' / \mh_2') - \Ph B_2 \Ph^T} \le
     \xi_2 \frac{\nu_n}{n}.
    \]
    Using $(a+b+c)^2 \le 3(a^2 + b^2 +c^2)$, we obtain
    \begin{align*}
        \fnorm{(\Sh_1 / \mh_1) - (\Sh_2' / \mh_2')}^2 &\ge 
        \frac13 \fnorm{B_1 -  \Ph B_2 \Ph^T}^2 \\
        &\quad - \fnorm{(\Sh_1 / \mh_1) -  B_1 }^2
        - \fnorm{(\Sh'_2 / \mh'_2)  -  \Ph B_2 \Ph^T}^2 \\
        &\ge \frac13 \min_{P} \fnorm{B_1 -  P B_2 P^T}^2 -    2K^2  \Bigl( \frac{\nu_n}{n}\Bigr)^2 \max_{r=1,2} \xi_r^2 \\
        &\ge \frac16 \min_{P} \fnorm{B_1 -  P B_2 P^T}^2 
    \end{align*}
    by assumption~\eqref{eq:dmatch:lower}.
    Next, we note that~\eqref{eq:mr:mhr} holds in this case. Let $m'_2 = \Ph m_2 \Ph^T$ and let $h$ be the elementwise harmonic mean of $m_1$ and $m'_2$. Then, both~\eqref{eq:hr:hhr} and~\eqref{eq:h:lower:bound} hold, irrespective of the specific $\Ph$. On $\Gc$, we have $48 C_2 \beta^4 \epst \le 1$ which, combined with the latter two inequalities, yields 
    \[
    \min_{k\ell} \hh_{k\ell} \ge N n^2 /(4\beta^2).
    \]
    We also note that $\sigh_{k\ell}^2 = \Bh_{k\ell}(1-\Bh_{k\ell}) \le 1/4$. Then, \begin{align}
        \Th_n \ge \frac{N n^2 /(4\beta^2)}{2 \cdot \frac14} \fnorm{(\Sh_1 / \mh_1) - (\Sh_2' / \mh_2')}^2.
    \end{align}
    Putting the pieces together, we have the desired inequality on $(\Ac \cap \Bc) \cap \Gc$. Combined with the probability bound~\eqref{eq:Ac:Bc:prob} for $(\Ac \cap \Bc)^c$,
   the proof is complete.

{\small 
\printbibliography
}

\appendix

\section{Consistency of connectivity matrix}
Given an $n \times n$ adjacency matrix $A$, we apply a community detection algorithm to obtain label vector $\zh = (\zh_1,\dots,\zh_n) \in [K]^n$.  
Let $\nhamming$ be the normalized Hamming distance, that is, $\nhamming(z,\zh) = \hamming(z,\zh)/n$ where
$\hamming(z,\zh) = \sum_{i=1}^n 1\{z_i \neq \zh_i\}$ is the Hamming distance.

\begin{remark}
\label{remark_kl}
In the proof of consistency below, our  results are given in terms of $ \maxnorm{ E} $ where $E$ here is a $K\times K$ matrix placeholder for different matrices as given in Propositions~\ref{prop:conn:consist} and \ref{prop:conn:consist:multi}. However, to simplify the proofs below, when computing upper bounds for $\maxnorm{ E}  \,=\, \max_{k,\ell \in \{1,\ldots,K\} }  \vert E_{k\ell}\vert $, we focus on the case $k\neq \ell$ and omit the case $k =\ell$ as it is similar. The only difference in dealing with $k =\ell$ is that the constants would need to be inflated.
\end{remark}

\begin{proposition}\label{prop:conn:consist}
    Assume that $(A,z) \sim \sbm_n(B, \pi)$ with $B$ satisfying~\eqref{eq:sparsity}. Let
    $
    S = \sums(A, z),\;  \Sh = \sums(A, \zh)$ and $
    m = \counts(z),\; \mh = \counts(\zh)$. Set 
    \[
    \Bh = \conn(A, \zh) = \Sh / \mh, \quad \Bt = \conn(A,z) = S / m.
    \]
    Fix $\kappa \in (0,1)$ and $\alpha > 0$, and let $\beta = 1/(\kb \pmin)$ with $\kb = 1-\kappa$, and define 
    \[
    \delta_n := \sqrt{\frac{3 \beta^2 \alpha \log n}{ n \nu_n}}.
    \]
    Assume that $\delta_n \le 1$ and $\nu_n \ge 3 (1+\alpha) \log n$.
    For $\sigma \in \Pi_K$, let $\eps(\sigma) = \nhamming(z, \sigma \circ \zh)$.
    Then,  with probability at least $1 - 3 K^2 n^{-\alpha} - K e^{-\kappa^2 n \pmin/3}$, for all $\sigma \in \Pi_K$ such that $12 \beta^2 \eps(\sigma) \le 1$,
	\begin{align}
	\maxnorm{P_\sigma\Bh P_\sigma^T - \Bt}  &\,\leq\, 
 56 C_1 \beta^2 \cdot \frac{\nu_n}n \eps(\sigma),
         \label{eq:B:Bh:max:dev}\\
         \maxnorm{\Bt - B}  &\,\leq\, 
            C_1 \frac{\nu_n}n \delta_n,
        \label{eq:B:Bt:max:dev}\\
        \maxnorm{S - P_\sigma \Sh P_\sigma^T} &\,\le\, 
        8C_1 \beta \cdot n \nu_n \eps(\sigma),
        \label{eq:S:Sh:max:dev} \\
        \maxnorm{m - P_\sigma \mh P_\sigma^T} &\,\le\, 
        6 \beta
        \cdot n^2 \eps(\sigma), \label{eq:m:mh:max:dev}
        \\
        \min_{k, \ell} m_{k\ell} &\ge n^2 / \beta^2.
	\end{align}
\end{proposition}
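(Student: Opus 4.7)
My plan is to establish the five bounds in the natural order: first the cluster-size lower bound, then control of the ``oracle'' estimator $\Bt$ based on the true labels, then the two perturbation bounds comparing the oracle quantities ($S,m$) to the plug-in quantities ($\Sh,\mh$), and finally combine these to get~\eqref{eq:B:Bh:max:dev}. By the Remark~\ref{remark_kl}, I can focus throughout on the $k \ne \ell$ case.

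The first step is purely a concentration argument on the label counts. Since $z \sim \text{Categorical}(\pi)^{\otimes n}$, each $n_k = \sum_i \ind{z_i=k}$ is $\bin(n,\pi_k)$, so a multiplicative Chernoff bound gives $n_k \ge (1-\kappa) n \pi_k \ge n/\beta$ simultaneously for all $k \in [K]$ with probability at least $1 - K e^{-\kappa^2 n \pmin/3}$. This yields $\min_{k\ne\ell} m_{k\ell} \ge n^2/\beta^2$ immediately. For~\eqref{eq:B:Bt:max:dev}, I condition on $z$ and observe that $S_{k\ell} \sim \bin(m_{k\ell}, B_{k\ell})$, so a Bernstein bound with deviation $t = m_{k\ell} C_1(\nu_n/n) \delta_n$ and the ingredients $m_{k\ell} \ge n^2/\beta^2$, $B_{k\ell} \le C_1\nu_n/n$ gives the claimed bound with failure probability $\le 2 n^{-\alpha}$ per entry; a union over $K^2$ entries produces one of the $K^2 n^{-\alpha}$ terms in the final probability.

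For the two perturbation bounds, fix $\sigma \in \Pi_K$ with $\eps := \eps(\sigma)$ and WLOG take $\sigma=\id$ by relabeling $\zh$. Let $D = \{i : z_i \ne \zh_i\}$, so $|D| = n\eps$. For~\eqref{eq:m:mh:max:dev}, the elementary identity $m_{k\ell} - \mh_{k\ell} = n_k n_\ell - \nh_k \nh_\ell$, combined with $|n_k - \nh_k| \le |D| = n\eps$ and the trivial $n_k, \nh_k \le n$, gives the bound $\le 2n^2\eps \le 6\beta n^2\eps$. For~\eqref{eq:S:Sh:max:dev}, any $(i,j)$ with $\ind{z_i=k,z_j=\ell} \ne \ind{\zh_i=k,\zh_j=\ell}$ must satisfy $i \in D$ or $j \in D$, so
\[
|S_{k\ell} - \Sh_{k\ell}| \;\le\; 2 \sum_{i \in D} \sum_j A_{ij} \;\le\; 2|D|\cdot \max_i (A\mathbf 1)_i.
\]
The one genuinely probabilistic ingredient here is a high-probability bound on the maximum row sum $\max_i (A\mathbf 1)_i \le 4 C_1 \nu_n$ (or similar), which I will obtain by Bernstein applied to each row (conditional on $z$) together with a union bound over $n$ rows; the assumption $\nu_n \ge 3(1+\alpha)\log n$ is exactly what is needed to make the Bernstein deviation smaller than the mean and absorb a $K n^{-\alpha}$ type term into the failure probability.

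Finally, for~\eqref{eq:B:Bh:max:dev}, I write the entry-wise difference as
\[
(P_\sigma \Bh P_\sigma^{\top})_{k\ell} - \Bt_{k\ell} = \frac{\Sh'_{k\ell} - S_{k\ell}}{\mh'_{k\ell}} + \Bt_{k\ell}\cdot \frac{m_{k\ell} - \mh'_{k\ell}}{\mh'_{k\ell}},
\]
where $\Sh' = P_\sigma \Sh P_\sigma^\top$ and $\mh' = P_\sigma \mh P_\sigma^\top$. The assumption $12\beta^2 \eps \le 1$ combined with the bound on $m - \mh'$ already proved gives the crucial lower bound $\mh'_{k\ell} \ge n^2/(2\beta^2)$, so I can safely divide. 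Plugging in~\eqref{eq:S:Sh:max:dev},~\eqref{eq:m:mh:max:dev}, and the oracle bound $\Bt_{k\ell} \le 2 C_1 \nu_n/n$ (from~\eqref{eq:B:Bt:max:dev} and $\delta_n \le 1$) yields a bound of order $C_1 \beta^2 (\nu_n/n)\eps$; tracking constants gives the stated $56 C_1 \beta^2$. The main subtlety is ensuring that the same high-probability event supports all bounds simultaneously and that~\eqref{eq:B:Bh:max:dev} is a purely deterministic consequence of~\eqref{eq:B:Bt:max:dev}--\eqref{eq:m:mh:max:dev} once $\mh'$ is bounded from below --- I expect the bookkeeping of constants and the verification of the row-sum Bernstein bound under the sparsity assumption $\nu_n \gtrsim \log n$ to be the most delicate pieces, though both are routine.
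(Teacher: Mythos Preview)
Your proposal is correct and follows essentially the same approach as the paper: the same three high-probability events (cluster-size concentration, binomial concentration for $\Bt$, and a row-sum bound $\max_i (A\mathbf 1)_i \lesssim C_1\nu_n$), and the same strategy of bounding $S-\Sh$ and $m-\mh$ via the mismatch set before combining into the ratio bound. The only cosmetic differences are that the paper introduces an intermediate quantity $\rho = \max_k d_k(z,\zh)/n_k$ (with $\rho \le 2\beta\eps$ on the good event) and the intermediate estimator $\Bt' = \Sh/m$, whereas you work directly with $|D| = n\eps$ and the algebraic identity $\Sh'/\mh' - S/m = (\Sh'-S)/\mh' + \Bt\,(m-\mh')/\mh'$; both routes give the stated constants.
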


\begin{proof}[Proof of Proposition~\ref{prop:conn:consist}]

By redefining $\zh$ to be $\sigma \circ \zh$, one gets $P_\sigma \Bh P_\sigma^T$ in place of $\Bh$. Thus, without loss of generality, we can assume $\sigma$ to be the identity permutation.
Let
\begin{align*}
	d_k(z,\zh) = \sum_{i} \bigl| 1\{z_i = k\} - 1\{\zh_i = k\}\bigr|, \quad d_H(z,\zh) = \sum_i 1\{z_i \neq \zh_i\}
\end{align*}
so that $\sum_k d_k(z, \zh) = 2d_H(z,\zh)$. This can verified by writing
\begin{align*}
	\bigl| 1\{z_i = k\} - 1\{\zh_i = k\}\bigr| &= 1\{z_i = k, \zh_i \neq k\}  + 1\{z_i \neq k, \zh_i = k\}.
\end{align*}
Let  $n_k   \,=\,   \sum_{i=1}^n \ind{z_i = k }$ and $\nh_k = \sum_{i=1}^n \ind{\zh_i = k}$ and note that $|n_k - \nh_k| \le d_k(z,\zh)$. 
Let us also write
\begin{align}\label{eq:rho:def}
	\rho := \max_k \frac{d_k(z,\zh)}{n_k} \end{align}
and note that $|(\nh_k / n_k) - 1| \le \rho$.

Let us consider some events.
First, consider event
\begin{align}
    \Ec_0 := \Bigl\{ \max_j \sum_i A_{ij} \le 2C_1 \nu_n \Bigr\}.
\end{align}
Applying Proposition~\ref{prop:binom:concent} conditioned on $z$, with $p = C_1 \nu_n /n$, and then taking expectation of both sides to remove the conditioning, we have
\begin{align*}
    \pr\Bigl( \sum_i A_{ij}  \ge C_1 \nu_n (1+u) \Bigr) \le e^{-u^2 C_1 \nu_n/3}.
\end{align*}
Take $u = 1/\sqrt{C_1}$. Then, $\pr(\Ec_0^c) \le n^{-\alpha}$ by union bound and assumption $\nu_n /3 \ge (1+\alpha) \log n$.

\medskip
Next, note that $\pr(n_k \le (1-\kappa) n \pi_k) \le \exp(-\kappa^2 n \pi_k/3)$ for $\kappa \in (0,1)$,  by Proposition~\ref{prop:binom:concent}.
Consider the event 
\[
   \mathcal{E}_1\,:=\,\Bigl\{  \underset{k =1,\ldots K}{\min}  \,n_k\, \geq   n / \beta  \Bigr\},
\]
where $1/\beta = (1-\kappa) \pmin$. Then, by union bound 
$
\pr( \Ec_1^c ) \le K \exp(-\kappa^2 n \pmin/3).
$
On $\Ec_1$,
\begin{align}\label{eq:rho:nhamming}
	\rho := \max_k \frac{d_k(z,\zh)}{n_k} \le \frac{2 d_H(z,\zh)}{\kb \pmin n} = 2 \beta \nhamming (z, \zh).
\end{align}

Next, we apply Proposition~\ref{prop:binom:concent} conditioned on $z$, to get
\begin{align*}
	\pr\bigl(|B_{k\ell} -\Bt_{ k \ell} | \ge \delta_n B_{k\ell} \; \given z \bigr) \cdot 1_{\Ec_1} 
	&\;\le\; 2 e^{- \delta_n^2 n_k n_\ell B_{k\ell} / 3} \cdot 1_{\Ec_1}\\
	&\;\le\; 2 e^{- \delta_n^2 (n^2 / \beta^2) (\nu_n / n)/ 3}
\end{align*}
where we have used the lower bound in~\eqref{eq:sparsity}. Take $\delta_n^2 = 3 \beta^2 \alpha \log n / ( n \nu_n)$ and let
\begin{align}
	\Ec_2 = \Bigl\{ | B_{k\ell} - \Bt_{k\ell}| \le \delta_n 
 B_{k\ell} \;\; \text{for all}\; k,\ell \in [K] \Bigr\}.
\end{align}
Then, by union bound, $\pr(\Ec_1 \cap \Ec_2^c) \le \pr(\Ec_2^c \given \Ec_1) \le 2 K^2 n^{-\alpha}$. We will work on $\Ec_0 \cap \Ec_1 \cap \Ec_2$ and  note that 
\[
\pr(\Ec_0 \cap \Ec_1 \cap \Ec_2) \ge 1 - \pr(\Ec_0^c) - \pr(\Ec_1^c) - \pr(\Ec_1 \cap \Ec_2^c).
\]
Note that on $\Ec_2$,
 \begin{align}
     \max_{k,\ell}| B_{k\ell} - \Bt_{k\ell}| 
     &\le \delta_n \cdot C_1 \nu_n /n, 
     \label{eq:B:Bt}\\
     \max_{k,\ell} \Bt_{k\ell} &\le 2 C_1 \nu_n / n  
     \label{eq:Bt:upper:bound}
 \end{align}
 using the upper bound in~\eqref{eq:sparsity} and assumption $\delta_n \le 1$. This proves~\eqref{eq:B:Bt:max:dev}.
 
\medskip

Now, let us establish~\eqref{eq:S:Sh:max:dev}. We have
\begin{align*}
 | S_{k\ell} - \Sh_{k\ell} |
 &\le \sum_{i,j} A_{ij} |  1\{z_i = k, z_j = \ell\} - 1\{\zh_i = k, \zh_j = \ell\} |.
\end{align*}
 Adding and subtracting $1\{\zh_i = k, z_j = \ell\}$ and expanding by triangle inequality, we get 
 $| S_{k\ell} - \Sh_{k\ell} |\le T_{21} + T_{22}$ where
\begin{align*}
    T_{21} := \sum_{i,j} A_{ij} |  1\{z_i = k, z_j = \ell\} - 1\{\zh_i = k, z_j = \ell\} |, \\
    T_{22} := \sum_{i,j} A_{ij} |  1\{\zh_i = k, \zh_j = \ell\} - 1\{\zh_i = k, z_j = \ell\} |. 
\end{align*}
Consider $T_{22}$ first. We have
\begin{align*}
    T_{22} &= \sum_j \Bigl( \sum_i A_{ij} 1\{\zh_i = k\} \cdot |  1\{\zh_j = \ell\} - 1 \{z_j = \ell\} |\Bigr) \\
    &\le \sum_j \Bigl( \sum_i A_{ij}  
    \cdot |  1\{\zh_j = \ell\} - 1 \{z_j = \ell\} |\Bigr) \\
    &\le 2C_1 \nu_n \cdot d_\ell(z,\zh)
\end{align*}
on $\Ec_0$. Similarly, $T_{21} \le 2 C_1 \nu_n \cdot d_k(z,\zh)$ on $\Ec_0$. Then,
\begin{align}\label{eq:Sh:S:dev}
    |S_{k\ell} - \Sh_{k\ell}| \le 2C_1 \nu_n \cdot(n_\ell + n_k) \rho \le 4 C_1 n \nu_n \rho 
\end{align}
using $n_k \le n$ for all $k$. Combined with~\eqref{eq:rho:nhamming}, this proves~\eqref{eq:S:Sh:max:dev}.

\medskip
Let us define $\Bt' = \Sh / m$. For $k \neq \ell$, we have $m_{k\ell} = n_k n_\ell$.
Then, on $\Ec_1$,
\begin{align*}
    | \Bt_{ k \ell} - \Bt_{k\ell}'| = \frac{|S_{k\ell} - \Sh_{k\ell}|}{n_k n_\ell} \le 
    2C_1 \nu_n \cdot(n_k^{-1} + n_\ell^{-1}) \rho  \le 4C_1 \beta \frac{\nu_n}{n} \rho
\end{align*}
using $n_k \ge n /\beta$ for all $k$. By~\eqref{eq:Bt:upper:bound}, on $\Ec_2$, we also have
\begin{align*}
    \Bt'_{k\ell} \le 2C_1 \frac{\nu_n}n (1 + 2 \beta \rho) \le 4C_1 \frac{\nu_n}n.
\end{align*}
assuming $2\beta \rho \le 1$.
Next we note that 
\[
\Bigl| \frac{\mh_{k\ell}}{m_{k\ell}} - 1   \Bigr| = 
\Bigl| \frac{\nh_k \nh_\ell}{n_k n_\ell} - 1 \Bigr|  \le 3\rho.
\]
Assuming that $3\rho \le 1/2$, letting $x = \mh_{k\ell}/ m_{k\ell}$, we have $|1-x| \le 3\rho \le 1/2$, hence $|1-x^{-1|}| \le 6\rho$. It follows that on $\Ec_2$
\[
  |\Bt_{k\ell}'- \Bh_{k \ell}| = \frac{\Sh_{k\ell}}{m_{k\ell}} \Bigl| 1 - \frac{m_{k\ell}}{\mh_{k\ell}} \Bigr| \le \Bt_{k\ell}' \cdot 6 \rho \le 24 C_1 \frac{\nu_n}n \rho.
\]

By triangle inequality 
\begin{align*}
	\maxnorm{\Bh - \Bt} &\le 
    \maxnorm{\Bh - \Bt'} + \maxnorm{\Bt' - \Bt} \\
    &\le 24 C_1 \frac{\nu_n}n \rho + 4C_1 \beta \frac{\nu_n}n \rho \\
    &\le 28 C_1 \frac{\nu_n}n \beta \rho
\end{align*}
using $\beta \ge 1$ (we are weakening the constants in favor of a simpler expression).

\bigskip

For~\eqref{eq:m:mh:max:dev}, we have $|[m - \mh]_{k\ell}|\le  n_k n_\ell |\frac{\nh_k \nh_\ell}{n_k n_\ell} - 1 | \le n^2 3 \rho$.
Putting the pieces together combined with~\eqref{eq:rho:nhamming} proves the claim.
\end{proof}

\subsection{Multi-network extension}
We write $(A,z) \sim \sbm_n(B, \pi)$ for an $n$-node draw from a Bayesian SBM, with connectivity $B$ and class prior $\pi$.
\begin{proposition}\label{prop:conn:consist:multi}
    Assume that $(A_t,z_t) \sim \sbm_{n_t}(B, \pi)$ for $t \in [N]$, where $B$ satisfies~\eqref{eq:sparsity} and $n_t$ satisfy
    \begin{align}
        n \le n_t \le C_2 n.
    \end{align}
    Let
    $
    S_t = \sums(A_t, z_t),\;  \Sh_t = \sums(A_t, \zh_t)$ and $
    m_t = \counts(z_t),\; \mh_t = \counts(\zh_t)$. For $\sigma = (\sigma_t) \in \Pi_K^{\otimes N}$, set
    \begin{align*}
        \Bh(\sigma) := \frac{
        \sum_t P_{\sigma_t} \Sh_t P_{\sigma_t}^T
        }{
        \sum_t P_{\sigma_t} \mh_t P_{\sigma_t}^T
        }, \quad \Bt := \frac{\sum_t S_t}{\sum_t m_t}, \quad 
        \eps_t(\sigma) &:= \nhamming(z_t, \sigma_t\circ \zh_t ) \end{align*}
    where we interpret the division of matrices as elementwise.

     Fix $\kappa \in (0,1)$ and $\alpha > 0$, let $\beta = 1/(\kb \pmin)$ with $\kb = 1-\kappa$, and define 
    \[
    \delta_n := \sqrt{\frac{3 \beta^2 \alpha \log n}{ N n \nu_n}}.
    \]
    Assume that $\delta_n \le 1$ and $ \nu_n \ge 3(1+\alpha) \log n$.
    Then, with probability at least 
    $1 - (C_2 N + 2K^2) n^{-\alpha} - NK e^{-\kappa^2 n \pmin/3}$, 
     we have, for all $\sigma = (\sigma_t) \in \Pi_K^{\otimes N}$ for which $\max_t \eps_t(\sigma) \le 1/(2\beta^3)$,
	\begin{align}
        \maxnorm{\Bh(\sigma) - \Bt} &\,\le\, 
         40 \,C_1 \beta^3 \frac{\nu_n}n \eps,
        \label{eq:Bh:Bt:max:dev:mult}\\
	\maxnorm{\Bt - B} &\,\leq\,  
        C_1 \frac{\nu_n} n \delta_n, 
        \label{eq:B:Bt:max:dev:mult}\\
        \maxnorm{S_t - P_{\sigma_t} \Sh_t P_{\sigma_t}^T} &\,\le\, 
        8 C_1 \beta \cdot \nu_n n_t \,\eps_t(\sigma),
        \label{eq:S:Sh:max:dev:mult} \\
        \maxnorm{m_t - P_{\sigma_t} \mh_t P_{\sigma_t}^T} &\,\le\, 
         6\beta \cdot n_t^2 \,\eps_t(\sigma), 
         \label{eq:m:mh:max:dev:mult}
        \\
        \min_{k, \ell}\, [m_t]_{k\ell} &\;\ge\; n_t^2 / \beta^2, \quad \text{for all $t \in [N]$}
	\end{align}
 where $\beta = 1/(\kb \pmin)$ with $\kb = 1-\kappa$. \end{proposition}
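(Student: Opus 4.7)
The plan is to reduce Proposition~\ref{prop:conn:consist:multi} to $N$ applications of Proposition~\ref{prop:conn:consist}, one per network, and to handle the pooled estimator $\Bt$ by a direct concentration argument at the aggregate level. For each $t \in [N]$, I would invoke Proposition~\ref{prop:conn:consist} on $(A_t,z_t)$ with the permutation $\sigma_t$ and with $n$ replaced by $n_t \in [n,C_2 n]$. The events analogous to $\Ec_0$ (max row-sum of $A_t$) and $\Ec_1$ (min community size of $z_t$) are the only parts of that proof I actually need at the per-network level; union-bounding the $\Ec_0$-type events over $t \in [N]$ contributes $C_2 N n^{-\alpha}$ (one factor $C_2$ because there are at most $C_2 n$ rows), and union-bounding the $\Ec_1$-type events contributes $N K e^{-\kappa^2 n \pmin/3}$. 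The per-network conclusions \eqref{eq:S:Sh:max:dev:mult}, \eqref{eq:m:mh:max:dev:mult}, and the lower bound $[m_t]_{k\ell}\ge n_t^2/\beta^2$ then follow immediately, as these are exactly what the single-network argument establishes on $\Ec_0\cap \Ec_1$.

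For \eqref{eq:B:Bt:max:dev:mult}, rather than aggregating the single-network $\delta_n^{(1)}$-type bounds (which would give a suboptimal rate), I would apply concentration directly to the pooled sum. Conditional on $(z_t)_{t=1}^N$, the entry $[\sum_t S_t]_{k\ell}$ (for $k\ne \ell$) is a sum of $M_{k\ell}:=\sum_t[m_t]_{k\ell}$ independent $\bern(B_{k\ell})$ variables, and on the intersection of the $\Ec_1$ events we have $M_{k\ell}\ge N n^2/\beta^2$. The same Chernoff-type inequality used in the single-network proof (Proposition~\ref{prop:binom:concent}) with deviation $\delta_n$ yields $|\Bt_{k\ell}-B_{k\ell}|\le \delta_n B_{k\ell}$ except with probability $2 e^{-\delta_n^2 M_{k\ell}B_{k\ell}/3}\le 2 n^{-\alpha}$ for the stated $\delta_n^2 = 3\beta^2\alpha \log n/(Nn\nu_n)$. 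A union bound over the $K(K+1)/2$ pairs $(k,\ell)$ contributes $2K^2 n^{-\alpha}$ to the failure probability and, combined with $B_{k\ell}\le C_1\nu_n/n$, gives \eqref{eq:B:Bt:max:dev:mult}.

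The main bound \eqref{eq:Bh:Bt:max:dev:mult} is where the real work lies. Setting $S:=\sum_t S_t$, $\widetilde S:=\sum_t P_{\sigma_t}\Sh_t P_{\sigma_t}^\top$ and analogously $m,\widetilde m$, I would split
\begin{align*}
\frac{\widetilde S_{k\ell}}{\widetilde m_{k\ell}}-\frac{S_{k\ell}}{m_{k\ell}} \;=\; \frac{\widetilde S_{k\ell}-S_{k\ell}}{\widetilde m_{k\ell}} \;+\; \frac{S_{k\ell}}{m_{k\ell}}\cdot \frac{m_{k\ell}-\widetilde m_{k\ell}}{\widetilde m_{k\ell}}.
\end{align*}
Summing the per-network bounds \eqref{eq:S:Sh:max:dev:mult} and \eqref{eq:m:mh:max:dev:mult} and using $n_t\le C_2 n$ yields $|\widetilde S - S|_{k\ell}\le 8 C_1 C_2 \beta N n\nu_n\,\eps$ and $|m-\widetilde m|_{k\ell}\le 6C_2^2 \beta N n^2 \,\eps$, with $\eps:=\max_t \eps_t(\sigma)$. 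The assumption $\eps \le 1/(2\beta^3)$, together with the pooled lower bound $m_{k\ell}\ge N n^2/\beta^2$, forces $|m-\widetilde m|_{k\ell}$ to be at most a bounded fraction of $m_{k\ell}$, so that $\widetilde m_{k\ell}\gtrsim m_{k\ell}$. Finally, \eqref{eq:B:Bt:max:dev:mult} and $\delta_n\le 1$ yield $S_{k\ell}/m_{k\ell}=\Bt_{k\ell}\le 2 C_1 \nu_n/n$. Substituting into the ratio decomposition produces a bound of the form $(\text{const})\cdot C_1\beta^3 (\nu_n/n)\,\eps$.

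The main obstacle will be the bookkeeping of constants in that last step: the prefactor $40$ in \eqref{eq:Bh:Bt:max:dev:mult} results from balancing the two terms in the ratio decomposition, each of which carries several powers of $\beta$, $C_1$ and $C_2$ together with the reciprocal $1/\widetilde m_{k\ell}$, so I would need to trace the constants carefully to verify that they collapse to the stated numerical bound. As flagged in Remark~\ref{remark_kl}, it suffices to establish the off-diagonal case $k\ne \ell$ (where $[m_t]_{k\ell}=n_{tk}n_{t\ell}$ factors cleanly); the diagonal case goes through with the same argument and slightly inflated constants.
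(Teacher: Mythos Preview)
Your proposal is correct and mirrors the paper's proof almost exactly: the same per-network events $\Ec_0,\Ec_1$, the same union bounds, the same direct pooled concentration for $\Bt$, and an equivalent ratio decomposition for $\Bh(\sigma)-\Bt$. The only refinement worth noting---since you flagged the constant $40$---is that the paper bounds the ratios $\bigl|\sum_t(\Sh_t-S_t)\bigr|\big/\sum_t m_t$ and $\bigl|\sum_t(\mh_t-m_t)\bigr|\big/\sum_t m_t$ directly (using $\sum_t n_t^2 \ge n\sum_t n_t$ so the $n_t$'s cancel) rather than bounding numerator and denominator separately as you do; this avoids the extraneous $C_2$ factors your route picks up and yields exactly $40\,C_1\beta^3(\nu_n/n)\eps$.
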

Note that $n_t$ here is deterministic (size of the $t$-th network) and different from $n_k$ in the proof of Proposition~\ref{prop:conn:consist}.

\begin{proof}
By redefining $\zh_t$ to be $\sigma_t(\zh_t)$, we can assume, without loss of generality, that $\sigma_t$ is the identity permutation (and hence $P_{\sigma_t} = I_K$) for all $t$. Note that none of the events we consider below depend on $\sigma$, hence the result indeed holds for all $\sigma$ simultaneously.

\medskip
First, consider event
\begin{align}
    \Ec_0 := \Bigl\{ \max_{t \in [N], \,j \in [n_t]}\sum_{i=1}^{n_t} [A_t]_{ij} \le 2C_1 C_2\nu_n \Bigr\}.
\end{align}
Applying Proposition~\ref{prop:binom:concent} conditioned on $z_t$, with $p = C_1 \nu_n /n$ and $n_t p \le C_1 C_2 \nu_n$, and then taking expectation of both sides to remove the conditioning, we have
\begin{align*}
    \pr\Bigl( \sum_{i=1}^{n_t} [A_t]_{ij}  \ge C_1 C_2 \nu_n (1+u) \Bigr) \le e^{-u^2 C_1 C_2 \nu_n/3}.
\end{align*}
Take $u = 1/\sqrt{C_1 C_2} \le 1$. Then, by union bound and assumption $\nu_n /3 \ge (1+\alpha) \log n$, we have $\pr(\Ec_0^c) \le C_2 N n^{-\alpha}$.

Next, let $n_{tk} := \sum_{i=1}^{n_t} 1\{(z_t)_i = k\}$, the (true) number of nodes in community~$k$ in network~$t$.  By Proposition~\ref{prop:binom:concent}, we have $\pr(n_{tk} \le \kb n_t \pi_k) \le \exp(-\kappa^2 n_t \pi_k/3)$ for $\kappa \in (0,1)$.
Consider the event defined as 
\[
   \mathcal{E}_1\,:=\,\Bigl\{  \underset{k =1,\ldots K}{\min}  \,n_{tk}\, \geq   n_t / \beta ,\; \forall t \in [N] \Bigr\},
\]
where $\beta = 1/(\kb \pmin)$ with $\kb = 1-\kappa$. Then, by union bound and using $n_t \ge n$, 
\[
\pr( \Ec_1^c ) \le N K \exp(-\kappa^2 n \pmin/3).
\]

By the same argument as in the proof of Proposition~\ref{prop:conn:consist},and letting $\eps_t = \nhamming(z_t, \zh_t)$, we have on $\Ec_0 \cap \Ec_1$,
\begin{align}\label{eq:b:d:control}
    \maxnorm{S_t - \Sh_t} \le 8 C_1 \beta \nu_n n_t \eps_t, \quad \maxnorm{m_t - \mh_t} \le 6 \beta n_t^2 \eps_t, \quad [m_t]_{k\ell} \ge n_t^2 / \beta^2.
\end{align}

Let us now control $\Bt = (\sum_t S_t) / (\sum_t m_t)$.
Conditioned on $z_t$, we have 
\[
[\textstyle\sum_t S_t]_{k\ell} \sim \bin\bigl([\textstyle x\sum_t m_t]_{k\ell}, B_{k\ell} \bigr).
\]
Also note that, for $k \neq \ell$, we have $[\sum_t m_t]_{k\ell} = \sum_t n_{tk} n_{t\ell} \ge \sum_t n_t^2 / \beta^2 \ge N n^2 / \beta^2$ on $\Ec_1$.
Then, applying Proposition~\ref{prop:binom:concent} conditioned on $z_t$, we get
\begin{align*}
	\pr\bigl(|B_{k\ell} -\Bt_{ k \ell} | \ge \delta_n B_{k\ell} \; \given z_t \bigr) \cdot 1_{\Ec_1} 
	&\;\le\; 2 e^{- \delta_n^2 [\sum_t m_t]_{k\ell} B_{k\ell} / 3} \cdot 1_{\Ec_1}\\
	&\;\le\; 2 e^{- \delta_n^2 (N n^2 / \beta^2)  (\nu_{n} / n)/ 3}
\end{align*}
where we have used the lower bound in~\eqref{eq:sparsity}. 
 Take $\delta_n^2 = 3 \beta^2 \alpha \log n / (N n \nu_n)$ and let
\begin{align}
	\Ec_2 = \Bigl\{ | B_{k\ell} - \Bt_{k\ell}| \le \delta_n 
 B_{k\ell} \;\; \text{for all}\; k,\ell \in [K] \Bigr\}.
\end{align}
Then, by union bound, $\pr(\Ec_1 \cap \Ec_2^c) \le \pr(\Ec_2^c \given \Ec_1) \le 2 K^2 n^{-\alpha}$. We will work on $\Ec_0 \cap \Ec_1 \cap \Ec_2$ and  note that 
\[
\pr(\Ec_0 \cap \Ec_1 \cap \Ec_2) \ge 1 - \pr(\Ec_0^c) - \pr(\Ec_1^c) - \pr(\Ec_1 \cap \Ec_2^c).
\]
Note that on $\Ec_2$,
 \begin{align}
     \max_{k,\ell}| B_{k\ell} - \Bt_{k\ell}| 
     &\le \delta_n \cdot C_1 \nu_n /n, 
     \label{eq:B:Bt}\\
     \max_{k,\ell} \Bt_{k\ell} &\le 2 C_1 \nu_n / n  
     \label{eq:Bt:upper:bound}
 \end{align}
 using the upper bound in~\eqref{eq:sparsity} and assumption $\delta_n \le 1$. This proves~\eqref{eq:B:Bt:max:dev}.

\medskip
Next we control the deviation $\Bh - \Bt$.
Treating division (and taking absolute values) of matrices as element-wise,
\[
\frac{\sum_t \Sh_t}{\sum_t \mh_t} = 
\frac{\sum_t S_t + \sum_t(\Sh_t - S_t)}{\sum_t m_t  +  \sum_t(\mh_t - m_t)} =: \frac{a + b}{c + d}.
\]
We then have
\[
\Bigl| \frac{a+b}{c+d} - \frac{a}{c} \Bigr| = \Bigl| \frac{(a/c)+(b/c)}{1+(d/c)} - (a/c) \Bigr| = \Bigl|\frac{(b/c) - (a/c) (d/c)}{1+ (d/c)} \Bigr| \le \frac{|(b/c) - (a/c) (d/c)|}{1 - |d/c|}.
\]
Let $\eps = \max_t \eps_t$.
Then, we have
\begin{align*}
    |[b/c]_{k\ell}| 
    \le \frac{\sum_t |[\Sh_t - S_t]_{k\ell}|}{\sum_t [m_t]_{k\ell}} 
    &\stackrel{(a)}{\le} 8C_1 \beta^3 \nu_n \frac{\sum_t  n_t \eps_t}{\sum_t n_t^2} \\
    &\stackrel{(b)}{\le} 8C_1 \beta^3 \frac{\nu_n}n \frac{\sum_t  n_t \eps_t}{\sum_t n_t} 
   \le 8C_1 \beta^3 \frac{\nu_n}n  \eps
\end{align*}
where (a) is by~\eqref{eq:b:d:control} and (b) uses assumption $n_t \ge n$.
Similarly, we have $|[d/c]_{k\ell}| \le 6 \beta^3 \eps$. Note that $a/c = \Bt$ hence elementwise in $[0, 2C_1 \nu_n /n]$ on $\Ec_2$. Assuming that $\beta^3 \eps \le 1/2$, we have,
\[
\frac{|(b/c) - (a/c) (d/c)|}{1 - |d/c|} \le
\frac{|b/c| + (a/c) |d/c|}{1 - \frac12} \le 40 \,C_1 \beta^3 \frac{\nu_n}n \eps.
\]
(where the final inequality means every element of the LHS is $\le$ RHS) .
That is, we have shown
\[
\Bigl| \frac{\sum_t \Sh_t}{\sum_t \mh_t} - \frac{\sum_t S_t}{\sum_t m_t}\Bigr| \le 40 \,C_1 \beta^3 \frac{\nu_n}n \eps.
\]
This proves~\eqref{eq:Bh:Bt:max:dev:mult} and finishes the proof. \end{proof}
 
\section{Remaining Proofs}
\subsection{Proof of Proposition~\ref{prop:matching:recovery}}

Recall that $\infnorm{\Delta} = \max_{i,j} | \Delta_{ij}|$. We need the following lemma on the perturbation of the LAP problem:

\begin{lemma}\label{lem:lap:pertub:at:identity}
	We have  $\lap(I + \Delta) = \{I\}$ as long as $\infnorm{\Delta} < 1/2$.
\end{lemma}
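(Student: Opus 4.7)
The plan is to reduce the lemma to a direct comparison of $\tr(PQ)$ with $P$ ranging over permutation matrices, exploiting the decomposition $\tr(P(I+\Delta)) = \tr(P) + \tr(P\Delta)$ and the fact that any non-identity permutation loses at least two fixed points.

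First, I would identify, for any $\sigma \in \Pi_K$ with associated permutation matrix $P_\sigma$, the two pieces $\tr(P_\sigma) = |\mathrm{fix}(\sigma)|$ and $\tr(P_\sigma \Delta) = \sum_{i=1}^K \Delta_{\sigma(i),\,i}$, both of which follow from the paper's identity $[P_\sigma]_{i*} = e_{\sigma(i)}^{\top}$. Plugging $P = I$ yields $\tr(I\cdot Q) = K + \sum_i \Delta_{ii}$, while for general $\sigma$ we get $\tr(P_\sigma Q) = |\mathrm{fix}(\sigma)| + \sum_i \Delta_{\sigma(i),\,i}$.

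Next, I would fix any $\sigma \neq \mathrm{id}$, set $m := K - |\mathrm{fix}(\sigma)|$, and observe that $m \ge 2$ (a permutation cannot have exactly $K-1$ fixed points). Subtracting the two trace expressions, the fixed points cancel and only the $m$ indices with $\sigma(i) \neq i$ contribute, giving
\begin{equation*}
\tr(I\cdot Q) - \tr(P_\sigma Q) \;=\; m \,+\, \sum_{i:\,\sigma(i)\neq i}\bigl(\Delta_{ii} - \Delta_{\sigma(i),\,i}\bigr) \;\ge\; m\bigl(1 - 2\,\maxnorm{\Delta}\bigr).
\end{equation*}
The bound $\maxnorm{\Delta} < 1/2$ then makes this strictly positive, so $I$ strictly dominates every other permutation matrix in $\tr(PQ)$, which is exactly the claim $\lap(I+\Delta) = \{I\}$.

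I do not expect a real obstacle here: the only subtlety is remembering that permutations on $[K]$ cannot have exactly $K-1$ fixed points, which is what converts ``at least one displacement'' into ``at least two displacements'' and gives the factor $m\ge 2$ (though $m\ge 1$ already suffices for strict inequality, so even this is overkill). Everything else is a one-line trace manipulation and a triangle-inequality bound on a sum of $m$ entries of $\Delta$, each of magnitude less than $1/2$.
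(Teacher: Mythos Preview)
Your proof is correct and takes essentially the same approach as the paper: both arguments compute $\tr(IQ) - \tr(P_\sigma Q)$ by splitting into fixed and non-fixed indices and bounding the $\Delta$-terms via $\maxnorm{\Delta} < 1/2$. The paper phrases the bound as $(1+\Delta_{ii}) > 1/2$ and $\Delta_{ij} < 1/2$ applied term-by-term, whereas you pair them as $\Delta_{ii} - \Delta_{\sigma(i),i} \ge -2\maxnorm{\Delta}$, but the content is identical.
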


Since $B_1$ and $B_2$ are similar matrices, they have the same eigenvalues. If the EVD of $B_1$ is given by $B_1 = Q_1 \Lambda Q_1^{\top}$, then the EVD of $B_2$ can written as $B_2 = Q_2 \Lambda Q_2^{\top}$. 
By Lemma~\ref{lemma:q2 = pq1s},
\begin{equation}\label{eq:Q2:Q1}
    Q_2 = P^* Q_1 S^*
\end{equation}
for some sign matrix $S^*$. 

First, we show assertion~(a). Let $\Delh_r := P_r \Qh_r S_r - Q_r$, so that $\Qh_r = P_r^{\top} (Q_r+\Delh_r) S_r$ and 
\begin{align*}
	\Qh_r^{\top} \allone = S_r ( Q_r+\Delh_r)^{\top} \allone
\end{align*}
using $P_r \allone = \allone$. Then, $	[\Qh_r^{\top} \allone]_k = S_{r,kk} (Q_{r, * k}^{\top} \allone + \Delh_{r,*k}^{\top}  \allone)$
where $Q_{r,*k}$ and $\Delta_{r,*k}$ are the $k$-th columns of $Q_r$ and $\Delta_r$. From the definition of $\hat{S}_{kk}$,
\begin{align}
	\Sh_{kk} & = \sign \left( \frac{[\Qh_1^{\top}  \allone]_k}{[\Qh_2^{\top}  \allone]_k}  \right) =
	\sign \left(\frac{Q_{1,*k}^{\top}  \allone + \Delh_{1,*k}^{\top}  \allone}{ Q_{2,*k}^{\top}  \allone + \Delh_{2,*k}^{\top}  \allone} \right) S_{1,kk} S_{2,kk}. 
\end{align}
From~\eqref{eq:Q2:Q1}, it follows that $S^*_{kk} = \sign \bigl( [Q_2^{\top} \allone]_k / [Q_1^{\top} \allone]_k \bigr)$.
Then, to show
 $\Sh_{kk} = S_{1,kk} S_{kk}^* S_{2,kk}$, it suffices to show that
\begin{equation}
\begin{aligned}
        |Q_{r,*k}^{\top}  \allone| = \Bigl|\sum_{j=1}^K Q_{r, jk} \Bigr| & >  \Bigl|\sum_{j=1}^K \Delh_{r, jk} \Bigr| = |\Delh_{r,*k}^{\top} \allone|
\end{aligned}
\end{equation}
for $r = 1,2$. Since $B_r$ are $(\theta, \eta)$-friendly, we have $|\sum_{j=1}^K Q_{r, jk}|  \geq \theta$
and so it is enough to show that
$
 |\sum_{j = 1}^K \Delh_{r, jk} | \leq \norm{\Delh_{r, * k}}_1  < \theta.
$

The noise matrices $\Delh_r$ are controlled by the Davis--Kahan theorem, 
\begin{align*}
    \norm{\Delh_{r, *k}}_2 & = \norm{Q_{r, *k} - P_r\Qh_{r, *k} S_{r, kk} }_2 \\
    & \leq \frac{2\sqrt{2} }{\eta} \fnorm{B_r - P_r\Bh_rP_r^{\top}}.
\end{align*}
Then, using assumption~\eqref{eq:eps:delta:assump},
\begin{equation}
    \norm{\Delh_{r, *k}}_1 \leq \sqrt{K} \norm{\Delh_{r, *k}}_2 \leq \frac{2\sqrt{2K}}{\eta} \frac{\theta \eta}{2\sqrt{2}K} \le \theta,
\end{equation}
proving assertion~(a). Note also that we have shown
\begin{align}\label{eq:ell1:mnorm:Delh}
	\mnorm{\Delh_r}_1 = \max_{k} \norm{\Delh_{r, *k}}_1  \le \theta.
\end{align}

Next, we prove $\lap(\Qh_1 \St \Qh_2^{\top}) = \{\Pt\}$. Using~\eqref{eq:Q2:Q1}, and the definitions of $\Delh_r$, we have
\begin{align*}
\Qh_2 & = P_2^{\top}(Q_2+\Delh_2)S_2\\
& = P_2^{\top}(P^* Q_1 S^* + \Delh_2)S_2 \\
& = P_2^{\top}(P^*(P_1 \Qh_1 S_1 - \Delh_1)S^*+ \Delh_2)S_2 \\
& = P_2^{\top}P^*P_1 \Qh_1 S_1 S^* S_2 - P_2^{\top} P^*\Delh_1 S^* S_2 + P_2^{\top} \Delh_2 S_2\\
& = \Pt \Qh_1 \St + \Delta
\end{align*}
where we let $\Delta = -P_2^{\top} P^*\Delh_1 S^* S_2 + P_2^{\top} \Delh_2 S_2$. We can then write
\begin{equation}
    \Qh_1 \St \Qh_2^{\top} = \Qh_1 \St (\Pt \Qh_1 \St + \Delta)^{\top} =
    \Pt^{\top} (I + \Delta_0).
\end{equation}
where $\Delta_0 =  \Pt \Qh_1 \St \Delta^{\top}$. It is then enough to study \begin{align*}
    \lap\bigl(\Pt^{\top}(I + \Delta_0) \bigr) =  \lap(I + \Delta_0) \cdot \Pt
\end{align*}
where the equality follows by a change-of-variable argument. The result follows from Lemma~\ref{lem:lap:pertub:at:identity} if we show $\infnorm{\Delta_0} \le 1/2$. 

We note that for permutation and sign matrices, both $\mnorm{\cdot}_\infty$ and $\mnorm{\cdot}_1$ are equal to 1. Using the submultiplicative property of $\mnorm{\cdot}_p$ for $p =1,2$, we have
\begin{align*}
	 \mnorm{\Delta^{\top}}_{\infty}  = \mnorm{\Delta}_1 \le \mnorm{\Delh_1}_1 + \mnorm{\Delh_2}_1 < 2\theta
\end{align*}
where we have used~\eqref{eq:ell1:mnorm:Delh}. Then, 
\begin{align*}
     \mnorm{\Delta_0}_{\infty} & \leq \mnorm{\Qh_1}_\infty \mnorm{\Delta^{\top}}_\infty \le 2 \theta  \mnorm{\Qh_1}_\infty.
\end{align*}
Since $\Qh_1$ has unit-norm rows, that is, $\norm{\Qh_{1,k*}}_2 = 1$ for all $k$, we obtain
\begin{align*}
	\mnorm{\Qh_1}_\infty = \max_{k} \norm{\Qh_{1,k*}}_1 \le \sqrt{K}. \end{align*}
Putting the pieces together
\begin{align*}
	\infnorm{\Delta_0} \le \mnorm{\Delta_0}_\infty \le 2 \theta \sqrt{K} < 1/2
\end{align*}
where the last inequality is by assumption. This proves $\lap(\Qh_1 \St \Qh_2^{\top}) = \{\Pt\}$.

\medskip
To prove the inequality in part (b), let $D_r := B_r - P_r \Bh_r P_r^{\top}$. Then, some algebra, using $B_2 = P^* B_1 P^{*T}$, gives 
\begin{align*}
\Pt \Bh_1 \Pt^{\top} 
& = P_2^{\top} P^{*} P_1 \Bh_1 P_1^{\top} P^{*T} P_2 \\
& = P_2^{\top} P^*(B_1 - D_1)P^{*T} P_2 \\
& = P_2 P^* B_1 P^{*T} P_2 - P_2^{\top} P^*D_1 P^{*T} P_2 \\
& = P_2^{\top} B_2 P_2 - P_2^{\top} P^* D_1 P^{*T}P_2 \\
&= P_2^{\top} D_2 P_2 + \Bh_2 -  P_2^{\top} P^* D_1 P^{*T}P_2,
\end{align*}
and so 
\begin{align*}
    \fnorm{\Pt \Bh_1 \Pt^{\top} - \Bh_2} & \leq \fnorm{P_2^{\top} D_2 P_2 - P_2^{\top} P^* D_1 P^{*T} P_2} \\
    &\le \fnorm{D_1} + \fnorm{D_2}.
\end{align*}
The proof is complete.
 
\section{Technical Lemmas}\label{app:tech:lemmas}

We have the following result, which follows for example from Bernstein's inequality:
\begin{proposition}\label{prop:binom:concent}
 Suppose that $S = \sum_{i=1}^n X_i$ where $X_i \sim \bern(p_i)$ independently for $i \in [n]$. Assume that $\max_i p_i \le p$.
 Then, for all $\delta \in [0,1]$,
	\[
	\pr\Bigl(\ph - p \ge \delta p \Bigr) \le e^{-\delta^2 np /3},
	\]
	and the same inequality holds for $\pr(p - \ph \ge \delta p)$.
\end{proposition}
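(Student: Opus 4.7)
The plan is to invoke the standard Chernoff--Bernstein moment generating function technique. The first step is the elementary Bernoulli MGF bound: for each $X_i \sim \bern(p_i)$ and any $\lambda \in \reals$, one has $\ex[e^{\lambda X_i}] = 1 + p_i(e^\lambda - 1) \le \exp(p_i(e^\lambda - 1))$, using $1 + x \le e^x$. By independence, this gives $\ex[e^{\lambda S}] \le \exp(\mu(e^\lambda - 1))$ where $\mu := \sum_i p_i \le np$ by assumption.

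For the upper tail, I will apply Markov's inequality to $e^{\lambda S}$ with $\lambda > 0$ and use $e^\lambda - 1 > 0$ together with the assumption $\mu \le np$ to obtain $\pr(S \ge (1+\delta)np) \le \exp(np(e^\lambda - 1) - \lambda(1+\delta)np)$. Optimizing in $\lambda$ gives $\lambda^\star = \log(1+\delta)$, yielding the classical Chernoff exponent $-np[(1+\delta)\log(1+\delta) - \delta]$. Finally, the standard elementary inequality $(1+\delta)\log(1+\delta) - \delta \ge \delta^2/3$ for $\delta \in [0,1]$ delivers the stated bound $\pr(\ph - p \ge \delta p) \le e^{-\delta^2 np/3}$, after dividing through by $n$ and identifying $\ph = S/n$.

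For the lower tail, the same recipe with $\lambda < 0$ and the optimal choice $\lambda^\star = \log(1-\delta)$ produces the exponent $-\mu[(1-\delta)\log(1-\delta) + \delta]$, and the inequality $(1-\delta)\log(1-\delta) + \delta \ge \delta^2/2 \ge \delta^2/3$ on $[0,1]$ closes the argument. In the fully inhomogeneous case this naturally produces an exponent of $-\mu\delta^2/3$ rather than $-np\delta^2/3$; in all places where the lower tail is actually invoked in the paper one has $p_i = p$ for every $i$ (so $\mu = np$), hence the stated form of the inequality suffices in the uses that matter.

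The proof has no serious obstacle; the only step of any substance is the routine verification of the two log-inequalities, each of which follows by checking that the function $\delta \mapsto (1+\delta)\log(1+\delta) - \delta - \delta^2/3$ (respectively $\delta \mapsto (1-\delta)\log(1-\delta) + \delta - \delta^2/2$) vanishes at $\delta = 0$ and has a non-negative derivative on $[0,1]$.
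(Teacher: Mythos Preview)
Your proof is correct and takes a genuinely different route from the paper. The paper invokes Bernstein's inequality as a black box: using $|X_i - p_i| \le 1$ and $\bar\sigma^2 \le p$, Bernstein gives $\pr(S - \ex[S] \ge \delta np) \le \exp\bigl(-\tfrac{\delta^2}{2+2\delta/3}\, np\bigr) \le e^{-3\delta^2 np/8}$, which is then weakened to $e^{-\delta^2 np/3}$. You instead carry out the Chernoff MGF argument from scratch, optimize $\lambda$ exactly, and reduce to the elementary log-inequalities. Your approach is more self-contained and yields the multiplicative Chernoff exponent directly; the paper's approach is quicker if one is willing to cite Bernstein. Both are standard and lead to the same conclusion.

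Your observation about the lower tail is sharp and worth emphasizing: as stated, the proposition's lower-tail bound with exponent $-\delta^2 np/3$ cannot hold under the mere hypothesis $\max_i p_i \le p$ (take all $p_i = 0$ for a counterexample). The Chernoff argument naturally delivers only $-\mu\delta^2/3$ with $\mu = \sum_i p_i$, and the paper's Bernstein route has exactly the same limitation, since $\ex[S] \le np$ goes the wrong way for the lower tail. You correctly note that every invocation of the lower tail in the paper has $p_i \equiv p$, so $\mu = np$ and the stated form is valid where it is used. This is the right resolution; the defect is in the proposition's wording, not in your argument.
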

\begin{proof}

By Bernstein inequality, using $|X_i -p_i| \le 1$, for any $u>0$, we have 
    \begin{align*}
        \pr( S - \ex[S] \ge n u) \le \exp \Bigl( - \frac{n u^2}{ 2 \bar \sigma^2  + 2 u/3}\Bigr) 
    \end{align*}
    where $\bar \sigma^2 = \frac1n \sum_{i=1}^n \var(X_i)$. We have $\bar \sigma^2 \le p$ and  $\ex[S] \le np$. Setting $u = \delta p$, we have 
    \begin{align*}
        \pr( S - np \ge \delta n p) &\le \exp \Bigl( - \frac{n \delta^2 p^2}{ 2 p  + 2 \delta p/3}\Bigr)  \\
        &= \exp \Bigl( - \frac{ \delta^2 }{ 2   + 2 \delta /3} n p\Bigr) \le \exp(- 3 \delta^2 n p / 8)
    \end{align*}
    where the last inequality uses $\delta \le 1$. Replacing $3/8$ with $1/3$ gives a further upper bound.
\end{proof}

\section{Proof of Auxiliary lemmas}

\subsection{Proof of Lemma~\ref{lemma:q2 = pq1s}}
	Since $P^* Q_1$ is an orthogonal matrix, by absorbing $P^*$ into $Q_1$ and redefining $Q_1$, we can assume $P^* = I$, without loss of generality. The problem reduces to showing that 
	(*) $Q_2 \Lambda Q_2^T = Q_1 \Lambda Q_1^T$
	iff there is a sign matrix $S^*$ such that $Q_2 = Q_1S^*$. Let $Q = Q_2^T Q_1$ and note that $Q$ is an orthogonal matrix. Multiplying~(*) on the left and right by $Q_2^T$ and $Q_2$, the problem reduces to showing that (**) $\Lambda = Q \Lambda Q^T$ for an orthogonal matrix $Q$ and diagonal matrix $\Lambda$ iff $Q$ is a diagonal sign matrix (i.e., $Q = S^*$).
	
	Assume (**) holds, the other direction being trivial. Note that changing $\Lambda$ to $\Lambda + \alpha I$ does not change (**), hence we can shift the diagonal entries of $\Lambda$ arbitrarily. Let $\Lambda = \diag(\lambda_k)$.  Since $(\lambda_k)$ are distinct, we can shift them so that $\lambda_1 < 0$ and $\lambda_k >0$ for $k \ge 2$. From (**), looking at the first entries of the two sides, $\lambda_1 = \sum_{k\ge 1} Q_{ik}^2 \lambda_k$, hence
	\[
	0 = -(1-Q_{11}^2)\lambda_1 + \sum_{k \ge 2} Q_{1k}^2 \lambda_k.
	\]
	Every term on the RHS is non-negative. It follows that every term has to be zero, implying $Q_{11}^2 =1$ and $Q_{1k} = 0$ for $k \ge 2$. This proves the assertion for the first row of $Q$. Repeating the argument for the other rows, the result follows.
	
	Next, we prove the uniqueness. Suppose that there exist  permutation matrices $P$ and $\tilde{P}$ such that $ PB_1 P^{T} = B_2 =  \tilde{P}B_1 \tilde{P}^{T}$.
	By the argument above, there exist sign matrices  $S$ and $\tilde{S}$ such that
	$P Q_1 S = Q_2, = \tilde{P} Q_1 \tilde{S}$.
	Hence, 
	\[
	Q_1  = P^{T} \tilde{P} Q_1 \tilde{S} S.
	\]
	The problem then reduces to showing that if $Q = P Q S$ where $Q$ is an orthogonal matrix, $P$ a permutation matrix and $S$ a sign matrix, then $P = I$. If $S = I$ (the trivial sign matrix), then $P = QQ^T = I$. So assume $S \neq I$ and $P \neq I$. Then, there is $j$ such that $P Q_{.,j} = - Q_{.,j}$, hence,
	\[
	\allone^T Q_{.,j} = \allone^T P Q_{.,j} = - \allone^T Q_{.,j}
	\]
	where $\allone$ is the all-ones vector.
	This gives $\allone^T  Q_{.,j} = 0$, contradicting friendliness of $B_1$. The proof is complete.
 
\subsection{Proof of Lemma~\ref{lem:lap:pertub:at:identity}}
	For $P \in \Pi_K$, let  
	\begin{align*}
	J_1(P)   \,&=\,\{ i \,:\,  P_{ii} \neq 0  \}, \\
	J_2(P) \,&=\,\{ (i,j) \,:\,  P_{ij} \neq 0,  \,\, i \neq j   \},
	\end{align*}
	and note that $|J_1(P)|+ |J_2(P)| = K$ for any $P \in \Pi_K$. By assumption $\infnorm{\Delta} < 1/2$,  and hence
	\begin{align*}
			1 +  \Delta_{ii}   >   1/2 &\quad \text{for}\quad  i \in J_1(P)\\
				\Delta_{ij} <   1/2 &\quad \text{for}\quad  (i,j) \in J_2(P).
	\end{align*}
	We also have
	\begin{align*}
			\tr\bigl(P^T (I+\Delta)\bigr) &= \sum_{i,j}   ( 1_{ \{   i=j \} } +   \Delta_{ij} ) P_{ij} \\
			&= \sum_{i \in J_1(P)} ( 1+   \Delta_{ii} ) \,+\, \sum_{(i,j) \in J_2(P)}  \Delta_{ij}.
	\end{align*}
	Let $P \neq I$, so that both $J_2(P)$ and $[K] \setminus J_1(P)$ are nonempty. Then,
	\begin{align*}
		\tr(I + \Delta) - \tr\bigl(P^T (I+\Delta)\bigr)  
		&= \sum_{i \in [K] \setminus J_1(P)} ( 1+   \Delta_{ii} ) \,-\, \sum_{(i,j) \in J_2(P)}  \Delta_{ij} \\
		&> \frac12 (K - |J_1(P)|) - \frac12 |J_2(P)| = 0,
	\end{align*}
	showing that identity is the unique optimal solution.

\section{Randomization}\label{sec:randomization}

Let  $\mathcal{U}(\Pi_K)$ denote the  uniform distribution on the set of permutation matrices $\Pi_K$.

\begin{lem}[Randomization]
    \label{lem:randomization}
    Assume that there is a permutation matrix $C_{rt} = C_{rt}(A_{rt})$ potentially dependent on the adjacency matrix $A_{rt}$ such that
\[
    \apermto{\Bt_{rt}, C_{rt}, B_{rt}}
\]
where      $\Bt_{rt}= \sums(A_{rt}, \zh_{rt}^{(0)})\oslash \counts(\zh_{rt}^{(0)})$. Let $\Bh_{rt}$  be constructed as in step~step~\ref{step:bhat_rt}.
Then, 
\begin{align}
\label{eqn:conclusion}
    \apermto{\Bh_{rt}, P_{rt}, B_{rt}}
\end{align}
where $P_{rt} \sim \mathcal{U}(\Pi_K)$ independent of $A_{rt}$ (and hence $\Bh_{rt}$). 
\end{lem}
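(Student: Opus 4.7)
The plan is to reduce the lemma to two ingredients: (i) an algebraic identity that expresses the randomized estimator $\Bh_{rt}$ as a conjugation of the unrandomized estimator $\Bt_{rt}$ by the permutation matrix associated with $\sigma_{rt}$, and (ii) the classical invariance fact that the uniform (Haar) measure on the finite group $\Pi_K$ is preserved under multiplication by a (possibly random but independent) permutation matrix. Conditioning on $A_{rt}$ then decouples the data-dependent matching $C_{rt}$ from the injected randomness $\sigma_{rt}$.

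Concretely, I would first verify the algebraic identity
\[
    \sums(A_{rt}, \sigma_{rt} \circ \zh_{rt}^{(0)}) = P_{\sigma_{rt}}^{\top}\,\sums(A_{rt}, \zh_{rt}^{(0)})\,P_{\sigma_{rt}},
\]
and analogously for $\counts$, using the definitions in Section~\ref{sec:test_construction} together with the rule $[P_\sigma B P_\sigma^{\top}]_{ij} = B_{\sigma(i),\sigma(j)}$ from the notation section. Taking the Hadamard ratio (which commutes with the simultaneous permutation of rows and columns) and applying this to step~\ref{step:bhat_rt} yields
\[
    \Bh_{rt} = P_{\sigma_{rt}}^{\top}\,\Bt_{rt}\,P_{\sigma_{rt}}.
\]
Substituting $\Bt_{rt} = P_{\sigma_{rt}}\,\Bh_{rt}\,P_{\sigma_{rt}}^{\top}$ into the hypothesis $\fnorm{B_{rt} - C_{rt}\Bt_{rt}C_{rt}^{\top}} \le \theta\eta/(2\sqrt 2 K)$ gives
\[
    \fnorm{B_{rt} - P_{rt}\,\Bh_{rt}\,P_{rt}^{\top}} \le \frac{\theta\eta}{2\sqrt 2 K}, \qquad P_{rt} := C_{rt}\,P_{\sigma_{rt}},
\]
which is exactly the desired approximate matching $\apermto{\Bh_{rt}, P_{rt}, B_{rt}}$.

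It remains to check the distributional claim about $P_{rt}$. The argument is a standard conditioning trick: condition on $A_{rt}$, which fixes $C_{rt} = c$ to a deterministic permutation. Since $\sigma_{rt}$ is drawn independently of everything else and uniformly on $\Pi_K$, and since left-multiplication by the fixed $c$ is a bijection of the finite group $\Pi_K$, the conditional law of $c\,P_{\sigma_{rt}}$ is again uniform on $\Pi_K$. Because this conditional law does not depend on the conditioning value, $P_{rt}$ is unconditionally uniform on $\Pi_K$ and independent of $A_{rt}$.

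The main obstacle is purely bookkeeping: making sure the conventions for $P_\sigma$ versus $P_\sigma^{\top}$, and for $\sigma$ versus $\sigma^{-1}$, are consistent when translating ``relabel by $\sigma_{rt}$'' into matrix conjugation of the sum/count matrices. Once that identity is in place, the rest is immediate from group-invariance of the uniform measure on $\Pi_K$.
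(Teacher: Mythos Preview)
Your proposal is correct and follows essentially the same approach as the paper: the paper writes $\Bh_{rt}=U_{rt}\Bt_{rt}U_{rt}^{\top}$ with $U_{rt}\sim\mathcal U(\Pi_K)$ (your $P_{\sigma_{rt}}^{\top}$), sets $P_{rt}=C_{rt}U_{rt}^{\top}$ (your $C_{rt}P_{\sigma_{rt}}$), and then proves independence by directly factorizing the joint probability $\pr(P_{rt}=P_0,\,A_{rt}=A_0)$, which is just the unpacked version of your conditioning-plus-group-invariance argument. The only extra work you do is spelling out the conjugation identity for $\sums$ and $\counts$ that the paper simply asserts ``by construction.''
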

\begin{proof}
Notice that by construction $\Bh_{rt} =  U_{rt} \Bt_{rt} U_{rt}^\top$ with $U_{rt}\sim \mathcal{U}(\Pi_K)$.    Let $P_{rt}  = C_{rt}U_{rt}^{\top} \in  \Pi_K$. Then,  
    $C_{rt} \,\Bt_{rt}\, C_{rt}^\top = P_{rt} \,\Bh_{rt}\, P_{rt}^\top$ and (\ref{eqn:conclusion}) follows. It remains to show independence of $P_{rt}$ and its uniform distribution. Indeed, let $P_0 \in \Pi_K$ and $A_0 \in \{0,1\}^{n\times n}$. We have (showing the dependence of $C_{rt}$ on $A_{rt}$ explicitly)
    \begin{align*}
          \pr(  P_{rt} =P_0 , A_{rt} = A_0 )  
          &= \pr(  U_{rt} = P_0^\top C_{rt}(A_{rt}), A_{rt} = A_0 )\\
    &= \pr( U_{rt} = P_0^\top C_{rt}(A_0), A_{rt} = A_0 )\\
    &=\pr( U_{rt} = P_0^\top C_{rt}(A_0)) \cdot \pr( A_{rt} = A_0 )\\
    &= \frac{1}{|\Pi_K|} \cdot \pr( A_{rt} = A_0 ) 
    \end{align*}
    where the third line is by the independence of $U_{rt}$ and $A_{rt}$ and the fourth line since $U_{rt} \sim \mathcal U(\Pi_K)$. The above shows that the joint distribution factorizes as uniform distribution for $P_{rt}$ and original (marginal) distribution for $A_{rt}$, which proves the claim.
\end{proof}

\section{Additional plots and simulations}

\subsection{ROC plots for general SBM with random $B$}
\label{app:mean:ROCs}
Figure~\ref{fig:random_B} shows the mean ROC curves for the experiment in Section~\ref{sec:exp-random-B}. The different plots correspond to different values of $K$. We refer to Section~\ref{sec:exp-random-B} for the detailed description of the experiments, where a summary of these curves via their ``area under the curve (AUC)'' was provided in Table~\ref{tab:general_sbm}.

\begin{figure}[H]
     \centering
     \begin{subfigure}[b]{0.45\textwidth}
         \centering
         \includegraphics[width=\textwidth]{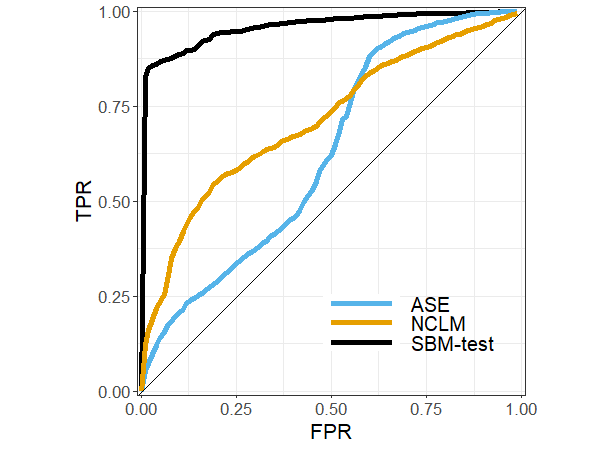}
         \caption{$K = 2$, mean AUC = 0.95.}
     \end{subfigure}
     \hfill
     \begin{subfigure}[b]{0.45\textwidth}
         \centering
         \includegraphics[width=\textwidth]{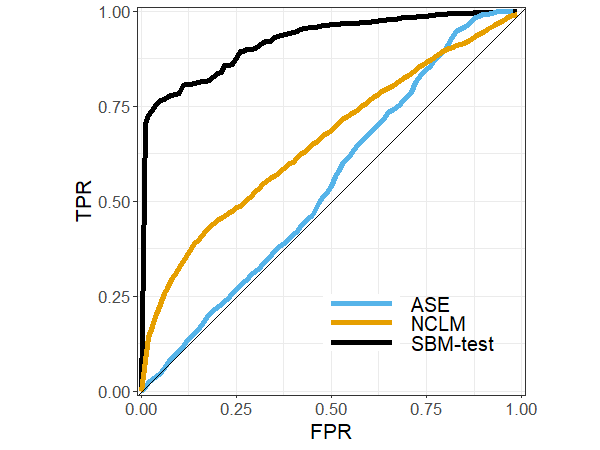}
         \caption{$K = 3$, mean AUC = 0.91.}
     \end{subfigure}
     \vskip\baselineskip
     \begin{subfigure}[b]{0.45\textwidth}
         \centering
         \includegraphics[width=\textwidth]{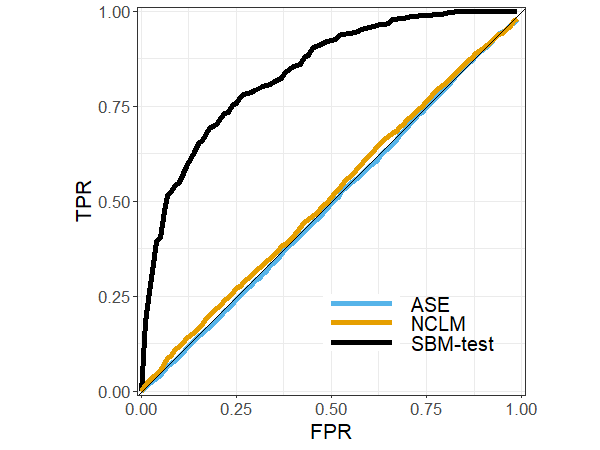}
         \caption{$K = 15$, mean AUC $ = 0.83$.}
     \end{subfigure}
     \hfill
     \begin{subfigure}[b]{0.45\textwidth}
         \centering
         \includegraphics[width=\textwidth]{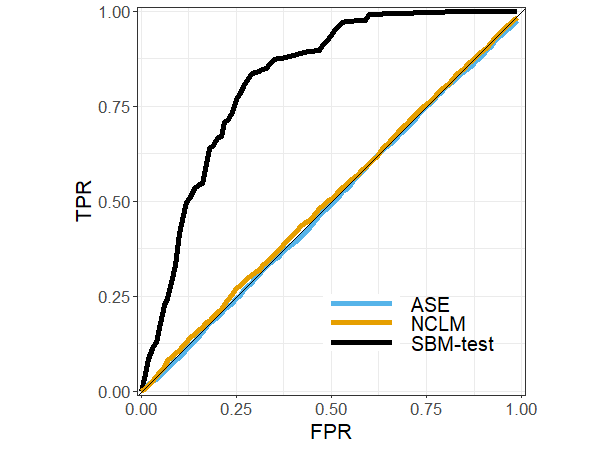}
         \caption{$K = 20$, mean AUC $= 0.81$.}
     \end{subfigure}
     \caption{ROC curves for the three methods (SBM-TS, MMD of ASE, and test statistic based on NCLM) averaged over 50 different experiments.}
     \label{fig:random_B}
\end{figure}

\subsection{Bandwidth of ASE-MMD}
\label{sec:appendix-bw}
We have conducted additional experiments in the same setting of Section~\ref{sec:exp-random-B}, that is, general SBM with random $B$ to determine the effect of bandwidth on the performance of ASE-MMD. The results are summarized in Figure~\ref{fig:random_B_bandwidth}. One observes that the bandwidth does not have a significant bearing on the  power of the ASE-MMD test, with ROCs remaining almost the same across the range of $\sigma^2 \in \{0.01, 0.1, 1, 10, 100\}$.

\begin{figure}[H]
     \centering
     \begin{subfigure}[b]{0.45\textwidth}
         \centering
         \includegraphics[width=\textwidth]{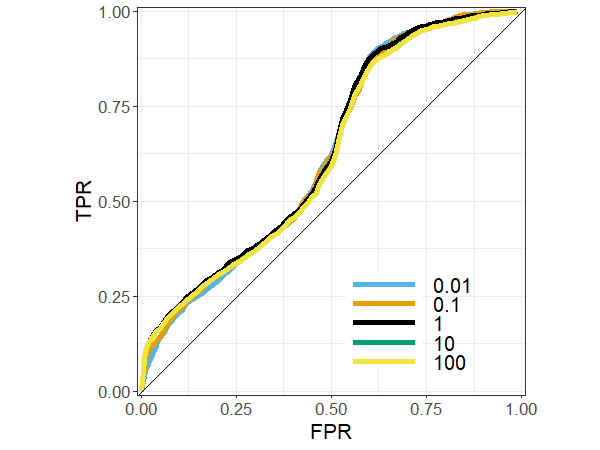}
         \caption{$K = 2$}
     \end{subfigure}
     \hfill
     \begin{subfigure}[b]{0.45\textwidth}
         \centering
         \includegraphics[width=\textwidth]{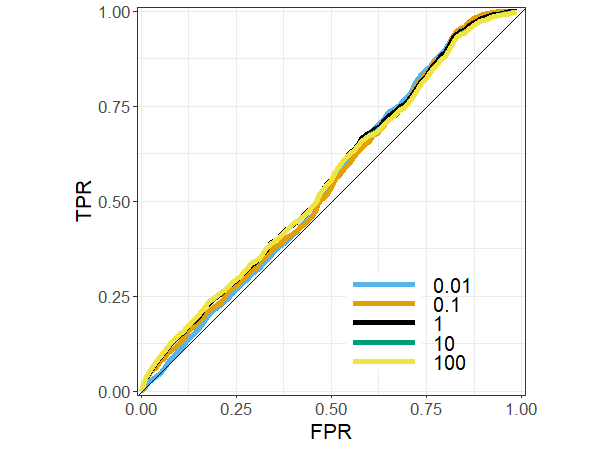}
         \caption{$K = 3$}
     \end{subfigure}
     \vskip\baselineskip
     \begin{subfigure}[b]{0.45\textwidth}
         \centering
         \includegraphics[width=\textwidth]{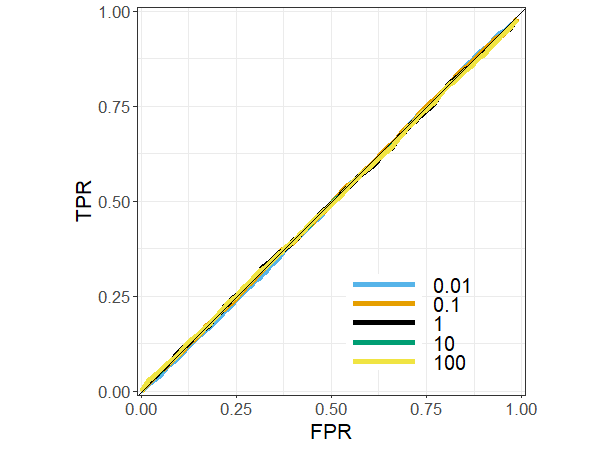}
         \caption{$K = 15$}
     \end{subfigure}
     \hfill
     \begin{subfigure}[b]{0.45\textwidth}
         \centering
         \includegraphics[width=\textwidth]{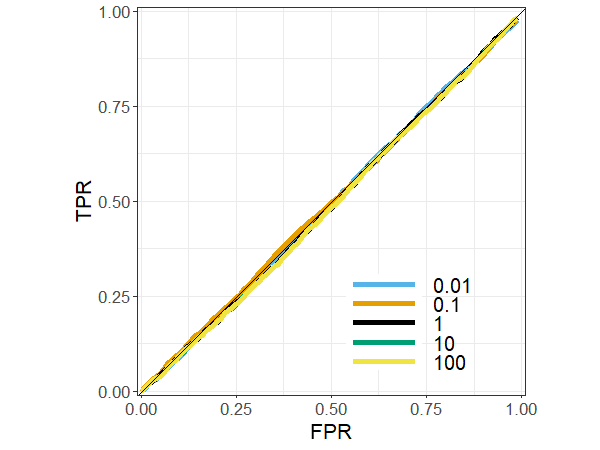}
         \caption{$K = 20$}
     \end{subfigure}
     \caption{ROC curves for different choices of the bandwidth $\sigma^2$ in the experiment with a general $B$.}
     \label{fig:random_B_bandwidth}
\end{figure}

\subsection{Number of Log Moments for NCLM}
\label{app:choice:of:J}
We have performed additional experiments in the same setting of Section~\ref{sec:exp-random-B}, that is, general SBM with random $B$ to determine the effect of the number $J$ of log-moments on the performance of NCLM. The results are summarized in Figure~\ref{fig:random_B_logmoments}. The general trend is that higher $J$ improves performance with $J = 20$ (the maximum we considered) producing the best results. We also note that this is mainly for small values of $K$, while for larger $K \in \{15,20\}$ the test is powerless regradless of the value of $J$.

\begin{figure}[H]
     \centering
     \begin{subfigure}[b]{0.45\textwidth}
         \centering
         \includegraphics[width=\textwidth]{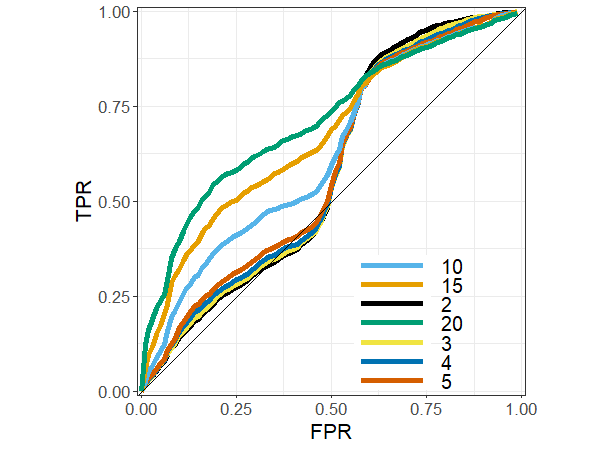}
         \caption{$K = 2$}
     \end{subfigure}
     \hfill
     \begin{subfigure}[b]{0.45\textwidth}
         \centering
         \includegraphics[width=\textwidth]{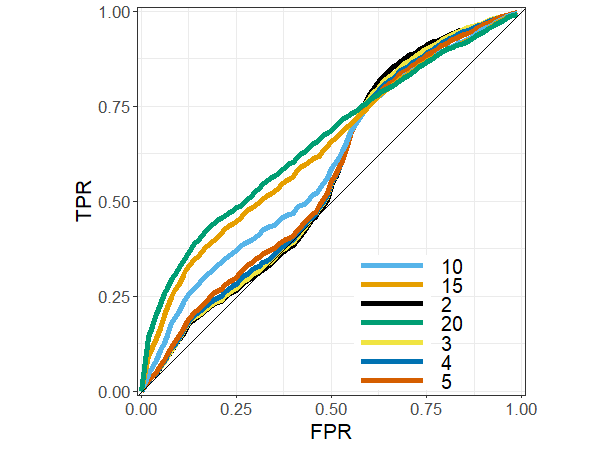}
         \caption{$K = 3$}
     \end{subfigure}
     \vskip\baselineskip
     \begin{subfigure}[b]{0.45\textwidth}
         \centering
         \includegraphics[width=\textwidth]{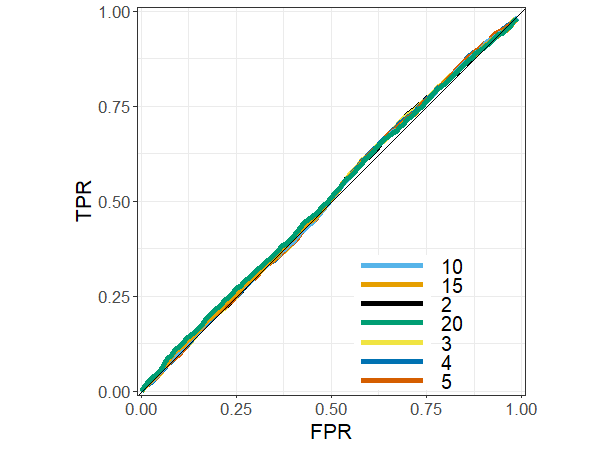}
         \caption{$K = 15$}
     \end{subfigure}
     \hfill
     \begin{subfigure}[b]{0.45\textwidth}
         \centering
         \includegraphics[width=\textwidth]{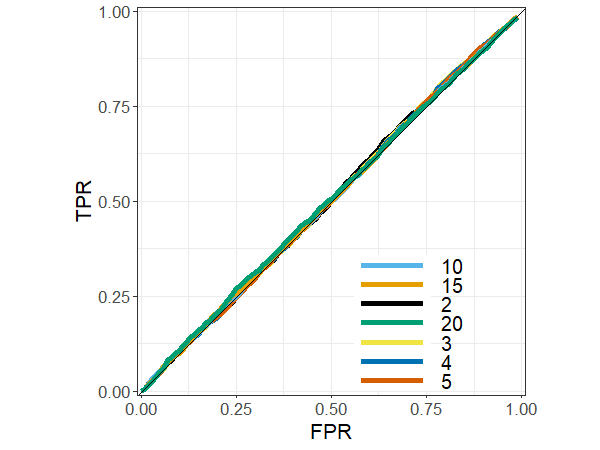}
         \caption{$K = 20$}
     \end{subfigure}
     \caption{ROC curves for different choices of the number of log-moments in the experiment with a general $B$.}
     \label{fig:random_B_logmoments}
\end{figure}

\subsection{Sample graphs for real-world data}
\label{app:sample:graphs}

\begin{figure}[H]
     \centering
     \includegraphics[width=.32\textwidth]{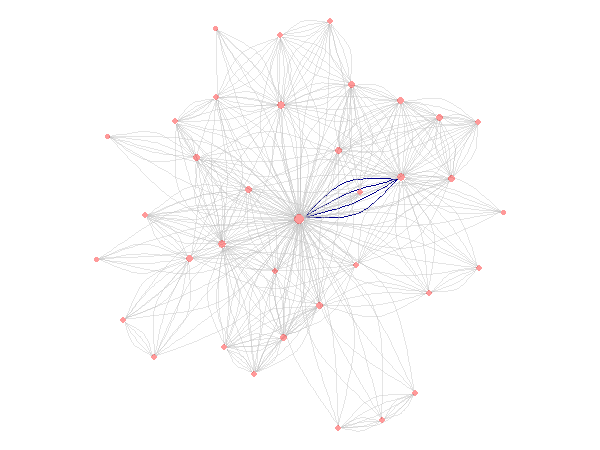}
     \includegraphics[width=.32\textwidth]{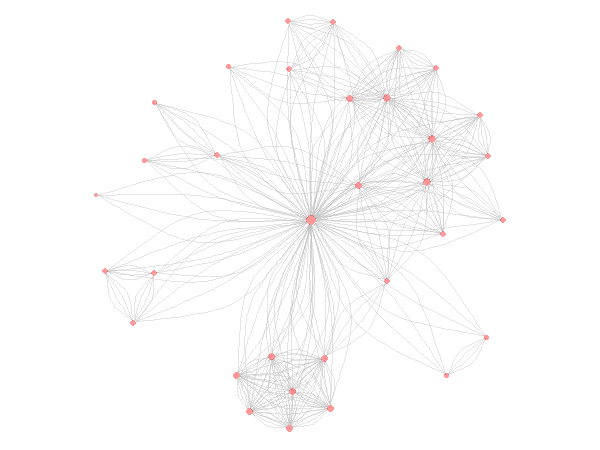}
     \includegraphics[width=.32\textwidth]{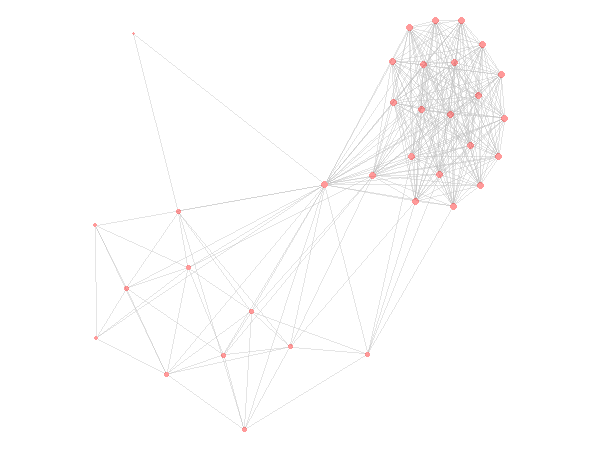}
    \caption{Sample graphs from the COLLAB dataset: High Energy Physics (left), Condensed Matter Physics (middle) and Astrophysics (right).}
    \label{fig:collab:graphs}
\end{figure}

\begin{figure}[H]
     \centering
     \includegraphics[width=.42\textwidth]{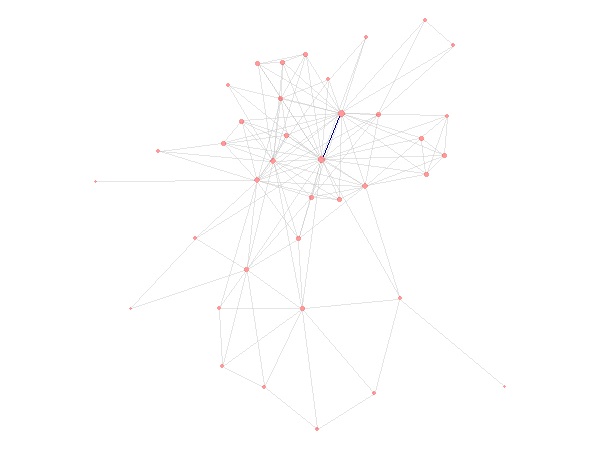}
     \includegraphics[width=.42\textwidth]{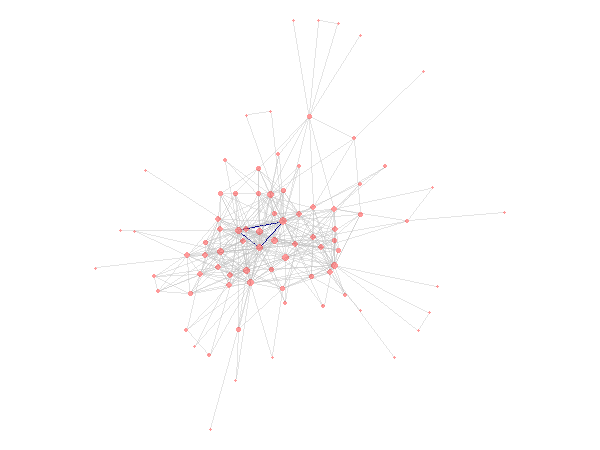}
     \caption{Sample graphs from the movie/television dataset: Class 1 (left) and Class 2 (right)}
     \label{fig:swgot_graphs}
\end{figure}

\end{document}